\theoremstyle{plain}
\newtheorem{theorem}{Theorem}[section]
\newtheorem{corollary}[theorem]{Corollary}
\newtheorem{lemma}[theorem]{Lemma}
\newtheorem{proposition}[theorem]{Proposition}
\theoremstyle{definition}
\newtheorem{remark}[theorem]{Remark}
\newtheorem{example}[theorem]{Example}
\newtheorem{definition}[theorem]{Definition}
\def\dim{\mbox{\rm dim }}
\def\l.l.o.{\it l.l.o}
\def\chiup{\raise 2pt\hbox{$\chi$}}
\title{The geometry of Polynomial Identities}
\author{ Claudio Procesi}
\begin{document}

\maketitle
\tableofcontents

\newpage

\section{Introduction}\label{introduction}

In the last 50 years, or more,  several papers have dealt with connections between non--commutative algebra and algebraic geometry.

Sometimes the term {\em non--commutative  algebraic geometry} has been used, which I find misleading since there are several quite disjoint instances of these connections.  Loosely the only unifying ideas are some information on various sorts of {\em spectral theory} of operators.\smallskip

In this paper we want to  examine a particular theory, that of algebras with polynomial identities, and show  how a sequence of varieties of semi--simple representations, see \ref{vase},   appears naturally associated to a PI theory, see Theorems \ref{finKs} and \ref{intrisK}.

These varieties are among a series of discrete, geometric and combinatorial invariants of PI theories, together with these one should also consider some {\em moduli spaces}, which parametrise certain coherent sheaves on these representation varieties. At the moment these objects seem to be hard to treat although some hints will be given in the last part of the paper.\smallskip

In this paper {\em algebra} will always mean associative algebra over some field $F$.
We will also assume that $F$ is of characteristic 0, otherwise too many difficult and sometimes unsolved problems arise, when convenient we shall also assume that $F$ is algebraically closed.\smallskip

Let us recall the main definitions and refer the reader to Rowen's book for a comprehensive discussion of the basic material, \cite{rowen1} or V. Drensky, \cite{Drens2}   and V. Drensky--E. Formanek
\cite{DF}. \medskip

We start from the free associative algebra $F\langle X \rangle  $, in some finite or countable set of variables $x_i$, with basis the {\em words} in these variables, and whose elements  we think of as {\em non commutative polynomials}.  

Given any algebra $A$  we have for every map, $ X\to A,\  x_i\mapsto a_i$ an associated {\em evaluation} of   polynomials $f(x_1,\ldots,x_m)\mapsto f(a_1,\ldots,a_m)\in A$ and then
\begin{definition}\label{PI}\begin{itemize}
\item A polynomial $f(x_1,\ldots,x_m)\in F \langle  X \rangle  $  is a polynomial identity of $A$, short PI, if   $f(a_1,\ldots,a_m)=0$ for all evaluations in $A$.

\item A PI algebra is an algebra $A$ which satisfies a non--zero polynomial identity.

\item Two algebras $A,B$ are PI--equivalent if they satisfy the same polynomial identities.
\end{itemize}
\end{definition}

The set $Id(A)$ of PI's of $A$ in the variables $X$ is an ideal of  $F \langle  X \rangle  $ closed under all possible endomorphisms of $F \langle  X \rangle  $, that is substitutions of the variables $x_i$ with polynomials.  Such an ideal $I$ is called {\em a $T$--ideal,} the quotient $F \langle  X \rangle  /I$ satisfies all the identities in $I$. In fact $F \langle  X \rangle  /Id(A)$ is a free algebra in the category of all algebras which satisfy the PI's of $A$.

When $X$ is countable and  $I$ is   a $T$--ideal, then $I$ is the ideal of all polynomial identities of $F \langle  X \rangle  /I$.

One usually says that $F \langle  X \rangle  /I$ is a {\em relatively free algebra}.  \smallskip

\begin{remark}\label{anno}
There is a small annoying technical point,  sometimes it is necessary to work with algebras without 1 (in the sense that either there is truly not 1 or we do not consider it as part of the axioms). In this case the free algebra has to be taken without 1. This implies a  different notion of $T$--ideals, since in the case of algebras with 1, a $T$--ideal is also stable under specialising a variable to 1, while this is not allowed in the other case.  We leave to the reader to understand in which setting we are working.
\end{remark}\medskip

As usual {\em polynomials} have a dual aspect of symbolic expressions or functions, the same happens in PI theory where 
$F \langle  X \rangle  /I$ can be identified to an algebra of polynomial maps  $A^X\to A$ which commute with the action of the automorphism group of $A$.

Its  algebraic combinatorial nature is quite complex and usually impossible to describe in detail even for very special algebras $A$. In general the  algebra of all  polynomial maps commuting  with the action of the automorphism group of $A$ is strictly larger. A   major example is when $A=M_n(F)$  is the algebra of $n\times n$ matrices over $F$ where one quickly finds a connection with classical invariant theory.  Even this basic example cannot be described in full except in the trivial case $n=1$ and the  non--trivial  case $n=2$ cf. \cite{procesi3}.\medskip

 A major difficulty in the Theory is the fact that relatively free algebras $F \langle  X \rangle  /I$, even when $X$ is finite,  almost never satisfy the Noetherian condition, cf. Amitsur \cite{amitsur3}, that is the fact that ideals (right, left or bilateral)  have a finite set of generators.  This is not just a technical point but reflects some deep  combinatorics appearing.   This is in part overcome by the solution of the Specht problem by Kemer, Theorem \ref{Spe}.  Section \S \ref{KeTe} is devoted to a quick overview of the required Kemer theory. In particular we shall stress the role of {\em fundamental algebras}, see Definition \ref{FAL1}  and Theorem \ref{primoK},  which are the building pieces of PI equivalence classes of finite dimensional algebras.\smallskip
 
 In \S \ref{KeTe1} we start to draw some consequences of Kemer's theory. The first goal of this paper is to show, Theorem  \ref{finKs}, that, when $X$ is finite,  for a given relatively free algebra   $F \langle  X \rangle  /I$ there is a canonical finite filtration    by $T$--ideals  $K_i$ such that the quotients $K_i/K_{i-1}$ have natural structures of finitely generated modules over special finitely generated commutative algebras $\mathcal T_i$. In fact each piece $K_i/K_{i-1}$ is a two sided ideal  in some special {\em trace algebras }  which are  finitely generated modules over these commutative algebras. These algebras $\mathcal T_i$ are coordinate rings of certain representation varieties, \S \ref{vase} and  the word {\em geometry} appearing in the title refers to the  geometric description of the algebraic varieties  supporting the  various modules $K_i/K_{i-1}$,  see Theorem    \ref{intrisK}.\smallskip
 
 As next step we show that these varieties are natural quotient varieties  parametrising semi--simple representations.  As an important consequence in Corollary \ref{intrisK1}  we show that:
 
  {\em if two  fundamental algebras are PI equivalent then they have the same   semisimple part.}\smallskip

 Then from some explicit information on these varieties we shall deduce a number of corollaries  computing the dimension  of the relatively free algebras, Proposition \ref{dimfua} and  the  growth of the cocharacter sequence, see Definition \ref{coch}, using some general facts of invariant theory,  Corollary \ref{raUU}.\smallskip
 
In the final section \S \ref{modA}     we shall indicate a method  to {\em classify}  finite dimensional algebras up to PI--equivalence \S \ref{model}.
\section{Growth and the theory of Kemer\label{KeTe}}{\em In this section we recall basic results of PI theory  which motivate our research and are needed for the material of this paper}\subsubsection{Growth} Given $n$, we let $V_n$ denote the space of multilinear polynomials of degree $n$ in the variables $x_1,\ldots, x_n$. 
The space  $V_n$ has as basis the monomials  $x_{\sigma(1)}\ldots x_{\sigma(n)}$ as $\sigma$ runs over all permutations of $1,2,\ldots,n$,
so   $\dim V_n = n!$.

Identities can always be multilinearized, hence the subset $Id(A)\cap V_n$ plays a special role and,  in characteristic zero,
the ideal $Id(A)$ is completely determined by the sequence of multilinear identities $\{Id(A)\cap V_n\}_{n\ge 1}$. In order to
study  $\dim (Id(A)\cap V_n)$  we introduce the quotient space $V_n/(Id(A)\cap V_n)$ and its dimension
$$
c_n(A):=\dim\left (\frac{V_n}{Id(A)\cap V_n}  \right).
$$
The integer $c_n(A)$ is the $n$-th {\it codimension} of $A$. Clearly $c_n(A)$ determines
$\dim (Id(A)\cap V_n)$ since $\dim V_n$ is known.

The study of growth for PI algebra $A$ is mostly the study of the rate of growth of the sequence
$c_n(A)$ of its codimensions, as $n$ goes to infinity. For a full survey we refer to \cite{regev2}.  We have the following basic property proved by Regev.

\begin{theorem}~\cite{regev1}
  $c_n(A)$ is always exponentially bounded.
\end{theorem}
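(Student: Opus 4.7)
The plan is to follow Regev's original combinatorial argument by exhibiting, for each $n$, an exponentially bounded spanning set of the quotient $V_n/(V_n\cap Id(A))$. Since $A$ is a PI algebra, standard multilinearization in characteristic zero produces a nonzero multilinear identity of some degree $d$, say $f=\sum_{\sigma\in S_d}\alpha_\sigma x_{\sigma(1)}\cdots x_{\sigma(d)}$. After relabeling variables and rescaling, we may assume $\alpha_e=1$, so that for every $y_1,\dots,y_d\in A$,
\[
y_1 y_2\cdots y_d \;=\; -\sum_{\sigma\neq e}\alpha_\sigma\, y_{\sigma(1)}\cdots y_{\sigma(d)}.
\]

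Call $\sigma\in S_n$ \emph{$d$-good} if it admits no decreasing subsequence of length $d$. The central reduction step is to show that the classes of the monomials $x_{\sigma(1)}\cdots x_{\sigma(n)}$ with $\sigma$ $d$-good span $V_n/(V_n\cap Id(A))$. Order monomials by the lexicographic order on the sequence $(\sigma(1),\dots,\sigma(n))$. Given a $d$-bad $\sigma$, choose $i_1$ minimal among positions starting a decreasing subsequence of length $d$, complete it to $i_1<\cdots<i_d$, and write the monomial as a product of blocks $B_0 B_1\cdots B_d$ where $B_0=x_{\sigma(1)}\cdots x_{\sigma(i_1-1)}$ and $B_j=x_{\sigma(i_j)}\cdots x_{\sigma(i_{j+1}-1)}$ (with $B_d$ ending at position $n$). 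Substituting $y_j\mapsto B_j$ in $f$ and left-multiplying by $B_0$ yields a congruence modulo $Id(A)$ rewriting the monomial as a linear combination of the monomials $B_0 B_{\tau(1)}\cdots B_{\tau(d)}$ with $\tau\neq e$. In each such term the prefix $B_0$ is unchanged, while the entry at position $i_1$ drops to $\sigma(i_{\tau(1)})<\sigma(i_1)$; hence the associated permutation is strictly smaller in lex order and the claim follows by noetherian induction. This is the step I expect to be the main obstacle: one has to choose the decreasing subsequence to start at the smallest possible position, precisely so that $B_0$ is fixed under every rearrangement coming from $f$, which is exactly what forces the lex order to decrease.

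To conclude, I would count the $d$-good permutations. By Schensted's theorem the Robinson--Schensted correspondence identifies $\sigma$ with a pair $(P,Q)$ of standard Young tableaux of common shape $\lambda\vdash n$, and the length of the longest decreasing subsequence of $\sigma$ equals $\ell(\lambda)$, the length of the first column. Therefore
\[
c_n(A)\;\le\;\#\{\sigma\in S_n:\sigma\text{ is }d\text{-good}\}\;=\;\sum_{\substack{\lambda\vdash n\\ \ell(\lambda)\le d-1}}(f^\lambda)^2,
\]
and an elementary estimate ($f^\lambda\le (d-1)^n$ for shapes with at most $d-1$ rows, together with a polynomially bounded number of such shapes) gives $c_n(A)\le P(n)(d-1)^{2n}$, which is exponentially bounded.
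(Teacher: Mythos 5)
The paper does not prove this statement; it is quoted directly from Regev's paper, so there is no internal argument to compare against. Your proposal is the classical proof (Regev's theorem via the reduction to permutations with no long decreasing subsequence, in Latyshev's streamlined form, plus the Robinson--Schensted count), and it is correct in outline and in almost every detail, with one step misstated. For $\tau\neq e$ with $\tau(1)=1$ the entry at position $i_1$ does \emph{not} drop: the term $B_0B_{\tau(1)}\cdots B_{\tau(d)}$ then begins with $B_0B_1$, so it agrees with the original monomial beyond position $i_1$. The correct statement is: let $j$ be minimal with $\tau(j)\neq j$; since $\tau$ fixes $1,\dots,j-1$ one has $\tau(j)>j$, the two monomials agree through the blocks $B_0B_1\cdots B_{j-1}$, and at position $i_j$ the new entry is $\sigma(i_{\tau(j)})<\sigma(i_j)$ because the chosen subsequence is decreasing; so the lexicographic order still strictly decreases and the induction closes. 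Relatedly, the point you flag as the main obstacle — choosing $i_1$ minimal so that $B_0$ is preserved — is not actually needed: $B_0$ is untouched in every term simply because you multiply by it on the left and the identity only permutes $B_1,\dots,B_d$, so any decreasing subsequence of length $d$ serves. The final count is fine: Schensted identifies the $d$-good permutations with pairs of standard tableaux on shapes with at most $d-1$ rows, $f^\lambda\le (d-1)^n$ for such shapes, and there are only polynomially many of them, giving $c_n(A)\le P(n)(d-1)^{2n}$, which is the desired exponential bound.
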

We have then the integrality theorem of Giambruno--Zaicev.

\begin{theorem}\label{giambruno-zaicev}\cite{giambruno.zaicev2}
Let $A$ be a  PI  algebra over a filed $F$ with $char(F)=0$, then the limit
$$
\lim_{n\to\infty}  c_n(A) ^{1/n}\in\mathbb N
$$
exists and is an integer, called {\em exponent}.
 \end{theorem}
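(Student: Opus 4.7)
The plan is to reduce to the finite--dimensional case using Kemer's theory (developed in \S \ref{KeTe}), then compute the exponent explicitly from the Wedderburn--Malcev structure. In characteristic $0$, every PI algebra is PI--equivalent to the Grassmann envelope of a finite--dimensional superalgebra, and codimension sequences of PI--equivalent algebras coincide; hence it suffices to produce an integer $d(A) \in \mathbb N$ and show $c_n(A)^{1/n} \to d(A)$ for a finite--dimensional $A$ over an algebraically closed $F$. I may even assume $A$ is a fundamental algebra in the sense of Definition \ref{FAL1}.

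Write the Wedderburn--Malcev decomposition $A = A_{ss} \oplus J$ with $A_{ss} = A_1 \oplus \cdots \oplus A_q$ a sum of simple algebras and $J$ the nilpotent radical. Define
$$
d(A) := \max\bigl\{\dim A_{i_1} + \cdots + \dim A_{i_s} \,\bigm|\, A_{i_1} J A_{i_2} J \cdots J A_{i_s} \neq 0\bigr\},
$$
where the indices range in $\{1,\ldots,q\}$ and may repeat. Since $\dim A_j$ is a positive integer for each $j$ (each $A_j$ is a matrix algebra over $F$), $d(A)$ is automatically a nonnegative integer, so the integrality part of the theorem is built into the definition. What must be proved is the two--sided asymptotic estimate $c_n(A)^{1/n} \to d(A)$.

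For the upper bound $c_n(A) \le C\, n^t d(A)^n$, decompose the $S_n$--module $V_n/(V_n \cap Id(A))$ into its cocharacter $\chi_n(A) = \sum_{\lambda \vdash n} m_\lambda \chi_\lambda$. The core estimate is that multiplicities $m_\lambda$ can be nonzero only for partitions $\lambda$ fitting in a ``generalised hook'' whose shape depends on the simple components and the nilpotency index of $J$; combining this with the hook--length formula, each summand is bounded by $\bigl(\sum \dim A_{i_j}\bigr)^n$ up to polynomial factors, and the maximum is by definition $d(A)^n$. For the matching lower bound one exhibits explicit non--identities. Choose a configuration $(A_{i_1}, \ldots, A_{i_s})$ realising the maximum together with radical elements $w_1,\ldots,w_{s-1}$ with $e_1 w_1 e_2 w_2 \cdots w_{s-1} e_s \neq 0$ for suitable idempotents $e_j$ in $A_{i_j}$. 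Build Capelli--type multialternating polynomials with $s$ alternating blocks of sizes $\dim A_{i_1}, \ldots, \dim A_{i_s}$, specialise the alternating variables to bases of the corresponding blocks and the connecting variables to the $w_j$; nonvanishing follows from the product being nonzero, while Regev's lower bound for the codimension sequence of such alternating polynomials yields $c_n(A) \ge c' \, n^{-t'} d(A)^n$.

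The two bounds together give $c_n(A)^{1/n} \to d(A)$, which is the required integer. The main obstacle is the upper bound: controlling which $\lambda$ can support $m_\lambda \neq 0$ requires a nontrivial combinatorial argument (a refined ``hook theorem'') that takes into account not just $\dim A_{ss}$ but also the interaction of the semisimple components through the radical products $A_{i_1} J A_{i_2} J \cdots$. The lower bound, by contrast, is essentially an explicit construction once the right configuration is identified, though verifying that the multialternating test polynomial is non--identically zero on $A$ uses in an essential way that the product of simple components through $J$ is nonzero.
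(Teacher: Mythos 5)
You should first note that the paper does not prove this theorem at all: it is quoted from \cite{giambruno.zaicev2}, so there is no internal argument to compare with. Judged on its own terms, your sketch has genuine gaps.

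First, the reduction. Kemer's superalgebra theorem (Theorem \ref{Kem1}) makes an arbitrary PI algebra PI--equivalent to the Grassmann envelope $G_0\otimes A_0\oplus G_1\otimes A_1$ of a finite--dimensional superalgebra, and this envelope is infinite dimensional and in general not PI--equivalent to any finite--dimensional algebra (it need not satisfy any Capelli identity, cf. Proposition \ref{cap}; the Grassmann algebra itself is the basic example). So the step ``hence it suffices to prove the statement for a finite--dimensional $A$'' is unjustified: your argument, if completed, would cover only finitely generated PI algebras, where Theorem \ref{Kem} applies. The general case requires computing the exponent of the Grassmann envelope of a finite--dimensional superalgebra, with $d$ redefined through its simple graded components; this is the harder half of \cite{giambruno.zaicev2} (the finitely generated case is \cite{giambruno.zaicev1}) and is absent from your plan.

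Second, your formula for $d(A)$ is wrong as stated, and your lower bound would not reach it even when corrected. Allowing repeated indices in $A_{i_1}JA_{i_2}J\cdots JA_{i_s}\neq 0$ overshoots: for $A=F[\varepsilon]$, $\varepsilon^2=0$, one has $A_1JA_1\neq 0$, so your $d(A)=2$, while $c_n(A)=1$ for all $n$ and the exponent is $1$; in the Giambruno--Zaicev formula the simple components must be distinct. Moreover, a non--identity alternating in $s$ blocks of sizes $\dim A_{i_1},\dots,\dim A_{i_s}$ is a single polynomial of bounded degree whose alternating sets have cardinality at most $\max_j\dim A_{i_j}$; it cannot by itself force $c_n\geq c'n^{-t'}d^n$ with $d=\sum_j\dim A_{i_j}$. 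What is needed is, for every $\mu$, a non--identity alternating in $\mu$ disjoint layers each of the full cardinality $d$, each layer evaluated on a basis of $A_{i_1}\oplus\cdots\oplus A_{i_s}$ with radical elements bridging the distinct blocks --- precisely the fundamental/Kemer polynomial mechanism recorded in Lemma \ref{fulfu}, Theorem \ref{primoK} and Example \ref{KiM}; only such a family forces cocharacters containing a $d\times\mu$ rectangle, whence $\chi_\lambda(1)\geq d^n/\mathrm{poly}(n)$. Constructing these polynomials and proving they are not identities is the real content of the lower bound, and the Regev--type codimension estimate you invoke applies to that many--layer configuration, not to the one you describe. Finally, the upper bound (the refined hook estimate on which $\lambda$ can carry $m_\lambda\neq 0$) is acknowledged but not proved, so even in the finitely generated case your text is a programme rather than a proof.
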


The space $V_n/(Id(A)\cap V_n)$ is a representation of the symmetric group $S_n$ acting by permuting variables and \begin{definition}\label{coch}
The $S_n$ character of that space, $\chi_{S_n}(V_n/(Id(A)\cap V_n))$ is
denoted
$$
\chi_n(A)=\chi_{S_n}\left (\frac{V_n}{Id(A)\cap V_n}  \right),
$$
and is called the {\em $n$-th cocharacter} of $A$.
\end{definition} Since $c_n(A)=\deg \chi_n(A)$, cocharacters are refinement
of codimensions, and are important tool in their study. By a theorem of Amitsur--Regev and of Kemer, $\chi_n(A)$
is supported on some $(k,\ell)$ hook.
Shirshov's Height Theorem, \cite{shirshov2}, then implies that the multiplicities of the irreducible characters, in the cocharacter sequence, are polynomially bounded.

One of the goals of this paper is to discuss, see \S \ref{diref} and  \S \ref{icocc},  some further informations one can gather on these numbers using geometric methods.
 
\subsection{Three fundamental theorems}

We need to review the results and some of the techniques of the Theory of Kemer, presented in the monograph \cite{kemer}, see also \cite{AbK} or the forthcoming book  \cite{AGPR}.

A fundamental Theorem of Kemer   states that 
\begin{theorem}\label{Kem}[Kemer]
If $X$ is a finite set, a non--zero $T$--ideal $I$  of $F \langle  X \rangle  $ is the ideal of polynomial identities in the variables $X$ for a finite dimensional algebra $A$.
\end{theorem}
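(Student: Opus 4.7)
The plan is to deduce the statement from Kemer's representability theorem for affine (finitely generated) PI-algebras: \emph{every affine PI-algebra over a field of characteristic zero is PI-equivalent to some finite-dimensional algebra}. Indeed, set $R := F\langle X\rangle/I$; because $X$ is finite, $R$ is affine, and because $I\neq 0$, $R$ is PI. Representability produces a finite-dimensional $A$ with $Id(A)=Id(R)$, and since $R$ is relatively free in its own variety, one has $I = Id(R) = Id(A)$, which is the conclusion.

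The bulk of the argument is therefore in the representability theorem, which I would attack via the Kemer index. For a $T$-ideal $J$, let $\beta(J)$ be the largest integer $t$ such that for every $\mu \in \NN$ there is a multilinear polynomial $f\notin J$ carrying $\mu$ pairwise disjoint alternating sets of size $t$, and let $\gamma(J)$ be the largest $s$ such that, for every $\mu$, such an $f$ can be chosen with $s$ additional disjoint alternating sets of size $\beta(J)+1$. Regev's exponential bound on codimensions, strengthened by Giambruno--Zaicev (Theorem~\ref{giambruno-zaicev}), forces $\beta(J)<\infty$; Shirshov's Height Theorem then forces $\gamma(J)<\infty$. Order such pairs $(\beta,\gamma)$ lexicographically. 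A \emph{fundamental} finite-dimensional algebra with semisimple part of dimension $t$ and radical of nilpotency index $s+1$ (in the precise sense of Definition~\ref{FAL1}) realizes the index $(t,s)$, and every achievable pair for a $T$-ideal is realized by a suitable finite product of such fundamentals.

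The main obstacle is the \emph{absorption} step: given that $A$ is a finite product of fundamentals chosen so that its Kemer index matches $(\beta(I),\gamma(I))$, show $Id(A)=I$. One inclusion is arranged by construction. For the other, suppose $f \in Id(A)\setminus I$ is multilinear. Using Razmyslov--Zubrilin Capelli polynomials, one modifies $f$ inside $R$ to produce either a non-identity of $R$ exhibiting a strictly larger Kemer index, contradicting the maximality of $(\beta(I),\gamma(I))$, or a witness that some fundamental component has been omitted from $A$, contradicting its construction. The induction proceeds on the lexicographic order of Kemer indices, with base case the nilpotent $T$-ideals, where $R$ is already finite-dimensional. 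This delicate "phoenix" argument, combined with Kemer's finite-basis theorem for $T$-ideals (Theorem~\ref{Spe}) which allows one to replace $I$ by a finitely generated sub-$T$-ideal controlling all of its multilinear components up to any fixed degree, is the technical heart of the proof.
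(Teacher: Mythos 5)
The paper does not prove Theorem \ref{Kem}: it is quoted from Kemer's monograph \cite{kemer} (see also \cite{AbK}, \cite{AGPR}), and the surrounding sections only recall the ingredients (Kemer index, fundamental algebras, Kemer polynomials, trace rings). So your proposal can only be measured against the standard Kemer argument that the paper summarizes. Your opening reduction is fine: $R=F\langle X\rangle/I$ is affine and PI, $I=Id_X(R)$, so it suffices to prove representability of affine PI algebras. But beyond that the proposal has genuine gaps. First, the finiteness of the Kemer index is mis-sourced: what makes $\beta(I)<\infty$ is the fact that an affine PI algebra satisfies a Capelli identity (a Shirshov-based result; this is exactly why the paper only defines the index for $T$--ideals containing a Capelli list, cf.\ Proposition \ref{cap} and Theorem \ref{Kem2}), not the Giambruno--Zaicev exponent theorem; and once $\beta<\infty$, the finiteness of $\gamma$ is immediate from the definition of $\beta$ (a polynomial with too many big layers is $\mu_0$-fold $(\beta+1)$-alternating), so Shirshov is not what is needed there.

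Second and more seriously, the ``absorption step,'' which you correctly identify as the heart, is not an argument as stated, and the strategy it gestures at would fail: choosing a direct sum of fundamental algebras merely realizing the pair $(\beta(I),\gamma(I))$ cannot force $Id(A)=I$, since many distinct $T$--ideals share the same Kemer index, and the proposed dichotomy (``strictly larger index or an omitted fundamental component'') has no mechanism behind it. The actual proof runs differently: one uses the first and second Kemer lemmas (Lemma \ref{fulfu}, Theorem \ref{primoK}) to produce fundamental algebras whose identities contain $I$, whose index matches, and whose Kemer polynomials are not in $I$; one passes to $I$ plus the $T$--ideal generated by Kemer polynomials, which strictly lowers the index, and applies induction to represent that quotient; and one separately represents the Kemer-polynomial layer by a finite-dimensional algebra built from generic elements with adjoined traces, exploiting the Cayley--Hamilton/Shirshov finiteness of the trace ring (the mechanism behind Theorem \ref{trcealg}, Lemma \ref{multtt} and Corollary \ref{kapo0}) and then specializing the commutative trace algebra to a finite-dimensional quotient; the representing algebra is the direct sum of the two pieces. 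None of this construction appears in your sketch. Finally, invoking Theorem \ref{Spe} as an ingredient is circular in Kemer's development: the solution of the Specht problem is deduced from representability, so it cannot be assumed when proving Theorem \ref{Kem}.
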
   In other words any finitely generated PI algebra is PI equivalent to a finite dimensional algebra.

This is in fact the first part of a more general statement, let us consider the Grassmann algebra, thought of as super--algebra,  in countably many odd generators  $G:=\bigwedge[e_1,e_2,\ldots]$ decomposed as $G=G_0\oplus G_1$ into its even and odd part. 
\begin{theorem}\label{Kem1}[Kemer]
Every  PI algebra $R$ is PI equivalent to the {\em Grassmann envelope} $G_0\otimes A_0\oplus G_1\otimes A_1$ of a finite dimensional super--algebra $A=A_0\oplus A_1$.
\end{theorem}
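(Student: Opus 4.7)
The plan is to reduce the theorem to a graded analogue of Theorem \ref{Kem} for super-algebras. For any super-algebra $A = A_0 \oplus A_1$ one has the functorial Grassmann envelope $G(A) := G_0 \otimes A_0 \oplus G_1 \otimes A_1$, and I would first establish the key translation: denoting by $Id_2(A)$ the ideal of $\mathbb{Z}/2$-graded identities of $A$ in the free super-algebra $F\langle Y \cup Z \rangle$ (even variables $Y$, odd variables $Z$), an ordinary multilinear polynomial $f(x_1,\ldots,x_n)$ lies in $Id(G(A))$ if and only if, for every assignment of a parity to each $x_i$, the graded polynomial obtained from $f$ by inserting signs according to the Koszul rule lies in $Id_2(A)$. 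This correspondence is a bijection between ordinary T-ideals and a distinguished class of T$_2$-ideals, and one can show that every ordinary T-ideal $I$ arises this way from some T$_2$-ideal $J$ of the free super-algebra (using the universal property of $G$ among superalgebras containing many "free anticommuting" elements). Thus it suffices to produce a finite dimensional super-algebra $A$ with $Id_2(A) = J$.

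The main step is therefore the following super-analogue of Theorem \ref{Kem}: every T$_2$-ideal of the free super-algebra is the ideal of graded identities of some finite dimensional super-algebra. I would prove this by adapting Kemer's argument in the ungraded setting. Introduce a Kemer index for T$_2$-ideals, defined through the multiplicities of extremal staircase shapes in the graded cocharacter sequence, and bound it using Regev's exponential bound and a graded Capelli-type identity to control the radical. Induct on the Kemer index: the base case amounts to the classification of verbally prime (fundamental) super-algebras, which one identifies with the concrete finite dimensional super-algebras $M_{k,l}(F)$, $M_n(F)$ with trivial grading, and $M_n(F) \otimes G$ with its natural grading. For the inductive step, construct a fundamental finite dimensional super-algebra whose semisimple part is the product of the fundamental super-algebras dictated by the extremal Young diagrams of $J$, and whose graded Jacobson radical has the correct nilpotency class; then show its T$_2$-ideal coincides with $J$ by pairing non-vanishing Kemer polynomials of $J$ with this algebra.

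The main obstacle is this last construction. One must read off both the super-simple components appearing in the semisimple part and the exact nilpotency of the graded radical directly from the combinatorial data of the T$_2$-ideal, and then verify that the constructed super-algebra neither satisfies more identities than $J$ nor fewer. Kemer's ingenious device is the construction of "Kemer polynomials" with prescribed alternating sets, combined with a phoenix argument that resurrects such a polynomial from $J$ whenever the proposed algebra is too small, forcing the match. Once this graded statement is in place, the passage back from $J$ to $Id(R) = Id(G(A))$ via the translation of the first paragraph is routine, completing the proof.
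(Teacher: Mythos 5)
The paper itself offers no proof of this statement: Theorem \ref{Kem1} is recalled from Kemer's monograph \cite{kemer} (the whole section is an unproved survey), so there is no argument of the paper to compare yours against; what follows assesses your outline on its own terms. Your route --- translate ordinary identities of a Grassmann envelope into $\mathbb{Z}/2$-graded identities via the Koszul sign rule and reduce to a graded representability theorem --- is indeed Kemer's strategy, but as written it has genuine gaps. The announced main step, ``every $T_2$-ideal of the free super-algebra is the ideal of graded identities of some finite dimensional super-algebra,'' is false as stated: a finite dimensional super-algebra of dimension $d$ satisfies the ordinary Capelli identity $C_{d+1}$, so for instance the $T_2$-ideal of graded identities of the free algebra with trivial grading (which contains no nonzero polynomial in the even variables alone) is not representable. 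The correct statement carries the hypothesis that the super-algebra satisfies an ordinary PI, equivalently that the $T_2$-ideal contains graded Capelli lists alternating in sufficiently many even and in sufficiently many odd variables; and to invoke it you must also verify that the relatively free super-algebra attached to your $J$ satisfies this hypothesis whenever $G$ of it is PI --- this uses the Amitsur--Regev hook theorem (via Regev's exponential bound) applied to $R$ and transferred through the sign translation, a step your sketch does not supply.

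Your description of the induction is also off in two respects. Verbally prime algebras are not the same as fundamental algebras (block triangular matrices are fundamental but not verbally prime, cf.\ Definition \ref{FAL1} and Example \ref{BTM}), and the classification of verbally prime $T$--ideals is not the base case of the representability induction --- it is a companion result, not an ingredient; moreover your list of finite dimensional super-simple blocks is wrong, since $M_n(F)\otimes G$ is infinite dimensional: over an algebraically closed field the finite dimensional super-simple algebras are $M_{k,l}(F)$ and $M_n(F\oplus cF)$ with $c$ odd, $c^2=1$, the latter having Grassmann envelope $M_n(G)$. Finally, the actual content of Kemer's proof --- the two Kemer lemmas matching the Kemer index of the $T_2$-ideal to the $t,s$-index of a fundamental super-algebra, the trace-ring/Razmyslov--Zubrilin representability argument that converts the relatively free algebra with Kemer polynomials adjoined into a finite dimensional model, and the Phoenix property --- is only named in your last paragraph, not proved. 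So what you have is a broadly correct road map of the standard proof, with one false intermediate claim and the hard steps left as black boxes.
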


 The algebra $A$ is of course not unique, nevertheless some normalisations in the choice of $A$ can be made and the purpose of this paper is to show that there is a deep geometric structure  of the algebra $F \langle  X \rangle  /I$  which reflects the structure of these normalised algebras.

A major motivation of  Kemer  was to solve the Specht problem, that is to prove 
\begin{theorem}\label{Spe}[Kemer]
All $T$--ideals are finitely generated as $T$--ideals.
\end{theorem}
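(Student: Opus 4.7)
The plan is to prove the equivalent Ascending Chain Condition on $T$--ideals of $F\langle X\rangle$. Since $\Char F=0$, every $T$--ideal $I$ is generated as a $T$--ideal by its multilinear components $\bigcup_n (I\cap V_n)$, so a chain of $T$--ideals stabilizes if and only if its multilinear slices do. The strategy, following Kemer, is to use Theorem \ref{Kem1} to replace each $T$--ideal by the identities of the Grassmann envelope of a finite dimensional superalgebra, attach a numerical invariant (the \emph{Kemer index} $\kappa(I)\in\mathbb{N}^2$, ordered lexicographically), and induct on it.

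The Kemer index has the shape $\kappa(I)=(t,s)$, where $t$ is essentially the dimension of the semisimple part of the enveloping superalgebra---and thus, by Theorem \ref{giambruno-zaicev}, tied to the exponent---while $s$ records the nilpotency length of the radical refined by the $\mathbb{Z}/2$--grading. Fundamental algebras (Definition \ref{FAL1}) are precisely those whose index cannot be further lowered by passing to a PI--equivalent finite dimensional superalgebra. I would next invoke Kemer's Lemma: for each fundamental superalgebra of index $(t,s)$ one constructs explicit multilinear \emph{Kemer polynomials}, alternating in suitably many disjoint sets of variables, which are non--identities for the given algebra but vanish on every algebra of strictly smaller index. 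Consequently $\kappa$ is weakly decreasing under strict inclusion of $T$--ideals, and by well--foundedness of the lex order on $\mathbb{N}^2$, in any ascending chain $I_1\subseteq I_2\subseteq\ldots$ the value $\kappa(I_n)$ stabilizes at some $(t_0,s_0)$ from a certain point on.

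With the index constant in the tail, one finishes by showing finite stabilization within the layer of index $(t_0,s_0)$. Let $\Gamma$ be the intersection of all $T$--ideals whose Kemer index is strictly smaller than $(t_0,s_0)$. Modulo $\Gamma$, the identities of any algebra of index $(t_0,s_0)$ are controlled by the action of a trace algebra which is finitely generated as a module over a finitely generated commutative ring of invariants of a fundamental algebra---the very structure that Theorems \ref{finKs} and \ref{intrisK} will later formalize. A Hilbert--type Noetherianity argument on this finitely generated module forces the tail of the chain modulo $\Gamma$ to stabilize, and induction on the Kemer index closes the argument.

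The main obstacle is Kemer's Lemma itself, i.e. the construction of the Kemer polynomials. This is the delicate combinatorial heart of the proof: one must produce identities with the correct alternating pattern and verify that they remain non--identities for a fundamental algebra of the prescribed index while vanishing on all strictly smaller algebras. The passage through the Grassmann envelope is essential at this point, since alternations on odd variables translate, under the envelope, into symmetrizations coming from the exterior algebra. This is the step at which the super--structure truly earns its keep.
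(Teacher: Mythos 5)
Note first that the paper does not prove this theorem: it is quoted from Kemer's monograph, so your sketch has to stand on its own. As a roadmap it does follow the known strategy (Grassmann envelope, induction on the Kemer index, Noetherianity of trace rings), and the reduction of finite generation to the ascending chain condition, together with the stabilization of the index along a chain (which in fact follows from Remark \ref{leski} alone, no Kemer Lemma needed), is fine. But beyond the admitted omission of Kemer's Lemma there are genuine gaps in the part that is supposed to finish the proof.

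The decisive one is the final step. Taking $\Gamma$ to be the intersection of \emph{all} $T$--ideals of index strictly smaller than $(t_0,s_0)$ is not a workable reduction: the members $I_n$ of the stabilized tail have index exactly $(t_0,s_0)$, there is no containment between them and such a $\Gamma$, and nothing identifies the images of the $I_n$ modulo $\Gamma$ with submodules of a single finitely generated module. Moreover the claim that ``the identities of any algebra of index $(t_0,s_0)$ are controlled by a trace algebra which is a finitely generated module over a finitely generated commutative ring'' is too strong: in the paper's machinery only the $T$--ideal generated by the $\mu$--large Kemer polynomials carries a compatible trace--module structure (Corollary \ref{kapo0}, Theorem \ref{extac}); the relatively free algebra itself is module--finite only after adjoining traces (Theorem \ref{trcealg}), and an arbitrary $T$--ideal is not a trace submodule. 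The heart of Kemer's argument is exactly what your phrase ``a Hilbert--type Noetherianity argument forces the tail to stabilize'' glosses over: one must show that the part of each $I_n$ lying in the Kemer layer is a trace submodule, via the polarization identity \eqref{TrK} with a $\mu$ that can be taken uniform along the chain, conclude module--finiteness there, convert module generators back into $T$--ideal generators because the trace action is realized by variable substitutions, and handle the quotient by the Kemer layer, which has strictly smaller index, by the inductive hypothesis. Separately, the trace--ring machinery (Theorems \ref{finKs}, \ref{intrisK}) is developed in the paper only for $T$--ideals containing a Capelli list, i.e.\ the affine case; for a general $T$--ideal your $(t,s)$ must be the super--index of an enveloping superalgebra, and the entire Noetherian layer structure has to be rebuilt with superalgebras and supertraces before the Grassmann envelope reduction closes — acknowledging that the envelope is ``essential'' does not supply that transfer. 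So what you have is a correct outline of the strategy, not a proof: besides Kemer's Lemma, the module--to--$T$--ideal finiteness transfer and the super version of the trace--ring argument are missing, and the specific reduction via your $\Gamma$ would have to be replaced by the layer-by-layer argument just described.
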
 
This implies that, when working with $T$--ideals, we can use the standard method of {\em Noetherian induction} that is, every non empty set of $T$--ideals contains maximal elements. 
\subsubsection{Alternating polynomials}
Kemer's theory is based on the existence of some special alternating polynomials which are not identities, that is do not belong to a given $T$--ideal $\Gamma$. So let us start by reviewing this basic formalism.\smallskip
 
Let us  fix some  positive integers  $\mu,t,s$, we want to construct  multilinear polynomials in some variables $X$ and possibly other variables $Y$.  We want to have $N=\mu t+s(t+1)$  variables $X$  decomposed as $\mu$ disjoint subsets called {\em small layers}  $X_1,\ldots,X_\mu $, with $t$ elements each and $s$  {\em big layers} $ Z_1,\ldots,Z_s$, with $t+1$ elements each,   we consider polynomials alternating in  each layer.

Such a space of polynomials is obtained, by operations  of substitution of variables,  from a finite dimensional space  $M_{\mu,t,s}(X,W)$  constructed as follows.  

Take $N+1$  variables $w_1,w_2,\ldots, w_{N+1}$ and consider the space spanned by  the $N!$  monomials
$$w_1x_{\sigma(1)}w_2x_{\sigma(2)}\ldots w_Nx_{\sigma(N)}w_{N+1},\ \sigma\in S_{N+1}. $$

In this space the subgroup  $G:=S_t^\mu\times S_{t+1}^s$   acts  permuting the monomials and thus we have a  subspace $M_{\mu,t,s}(X,W)$ of dimension $N!/ t! ^\mu(t+1)!^s$ with basis all possible polynomials alternating in these layers.

Since in general we work with algebras without 1 we also need to add all polynomials obtained from these by specialising some of the variables  $w_i$ to 1.

We obtain thus a space $M_X=M_{\mu,t,s}(X,W)$ of polynomials in $X$, and some of the $W$,  alternating in the layers of $X$, so that,  if we take any polynomial $f(X,Y)$  which is multilinear and alternating in the layers of $X$, and depends on other variables $Y$,  this polynomial is obtained as linear combination of elements of  $M_X$ after substitution of the variables $w_i$ with polynomials in the variables $Y$.

In particular this shows how  from a space $M_{\mu,t,s}(X,W)$ one may deduce, by variable substitutions,  larger spaces  $M_{\mu',t',s'}(X,W),\ \mu'\geq \mu,\ t'\geq t,\ s'\geq s.$\smallskip

Particular importance have  the {\em Capelli polynomials}, introduced by  Razmyslov, \cite{razmyslov1}
\begin{equation}\label{Capelli}C_m(x_1,x_2,\dots,x_{m};w_1,w_2,\dots,w_{m+1}):=\sum_{\sigma\in
S_{m}}\epsilon_\sigma w_1x_{\sigma(1)}w_2x_{\sigma(2)}\dots
w_{m}x_{\sigma(m)}w_{m+1}.\end{equation}
In fact this polynomial plays a role analogous to that of the classical Capelli identity, which is instead an identity of differential operators.

Since one often needs to analyse algebras without 1, it is useful to introduce the {\em Capelli list} $\mathcal C_m$  of all polynomials deduced from $C_m(x_1,x_2,\dots,x_{m};w_1,w_2,\dots,w_{m+1})$ by specialising one or more of the variables $w_i$ to 1.\smallskip

One of  the facts  of the theory, consequence of Theorem \ref{Kem},  is 
\begin{proposition}\label{cap}
A PI algebra is PI equivalent to a finite dimensional algebra if and only if it satisfies some Capelli identity.\footnote{for algebras without 1 we mean that it also satisfies all the identities of the Capelli list.}
\end{proposition}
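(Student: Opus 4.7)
The plan is to establish the two directions of the biconditional separately, with the forward direction being elementary and the reverse relying on Kemer's Theorem \ref{Kem}.

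\textbf{Forward direction.} Suppose $A$ is PI--equivalent to a finite dimensional algebra $B$ of dimension $d$. I would show that $B$ satisfies every polynomial in the Capelli list $\mathcal{C}_{d+1}$, whence by PI--equivalence so does $A$. The argument uses only the multilinearity and antisymmetry of $C_{d+1}$ in its $d+1$ alternating variables: for any evaluation $x_i \mapsto b_i \in B$, the $d+1$ elements $b_1, \ldots, b_{d+1}$ lie in a $d$--dimensional space and so are linearly dependent; writing one as a combination of the others and expanding by multilinearity produces a sum of evaluations of $C_{d+1}$ in which two alternating arguments take equal values, each of which vanishes by antisymmetry. Specialising any subset of the $w_j$ to $1$ does not affect this reasoning, so the entire Capelli list is covered.

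\textbf{Reverse direction.} Suppose $A$ satisfies a Capelli identity $C_m$. I would fix a finite set $X = \{x_1, \ldots, x_N\}$ of variables large enough that $C_m$ itself lies in $F\langle X \rangle$, and consider the $T$--ideal $I_X := Id(A) \cap F\langle X \rangle$ of $F\langle X \rangle$. By Kemer's Theorem \ref{Kem} applied to $I_X$ there is a finite dimensional algebra $B$ with $Id(B) \cap F\langle X \rangle = I_X$, and in particular $B$ satisfies $C_m$. The remaining task, and the main obstacle of the proof, is to upgrade this equality of restricted $T$--ideals to the equality $Id(A) = Id(B)$ in the full countably generated free algebra $F\langle Y \rangle$.

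This upgrade is exactly where the Capelli hypothesis is indispensable. Because both $A$ and $B$ satisfy $C_m$, the Amitsur--Regev--Kemer hook theorem recalled in \S\ref{KeTe} forces the $S_n$--cocharacters of both algebras to be supported on partitions with fewer than $m$ rows. In characteristic zero, a $T$--ideal is determined by its multilinear components; a standard reduction argument exploiting the alternation structure then shows that, modulo the $T$--ideal generated by $C_m$, every multilinear identity of $A$ can be expressed in terms of multilinear polynomials involving at most $N$ distinct variables, hence already detected by $I_X$ after renaming. Making this reduction rigorous is the technical heart of the proof and is what justifies concluding $Id(A) = Id(B)$, so that $A$ is indeed PI--equivalent to the finite dimensional algebra $B$ produced by Kemer's theorem.
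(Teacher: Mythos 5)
Your forward direction is complete and correct: a multilinear polynomial alternating in $d+1$ variables vanishes identically on any $d$--dimensional algebra, this covers every member of the Capelli list, and PI--equivalence transfers the identities to $A$. Your reverse direction also has the right skeleton, which is the one the paper intends: apply Theorem \ref{Kem} to the (nonzero) $T$--ideal of identities of $A$ in finitely many variables, and then use the Capelli hypothesis to pass from agreement in $N$ variables to agreement of the full $T$--ideals.

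The problem is that the pivotal step of that passage --- that a $T$--ideal containing the Capelli list is determined by (indeed generated as a $T$--ideal by) its intersection with $F\langle x_1,\ldots,x_N\rangle$ --- is only asserted; you explicitly defer it as ``the technical heart'', so as written the conclusion $\mathrm{Id}(A)=\mathrm{Id}(B)$ is unsupported. Moreover the tool you name does not fill the gap: the Amitsur--Regev hook theorem only confines cocharacters to a hook, which still allows partitions with arbitrarily many (short) rows, hence gives no bound on the number of variables needed. What actually does the work is the mechanism of \S\ref{usS}: since $A$ (and hence $B$, because $\mathcal C_m\subset I_X$) satisfies the Capelli list, every irreducible $S_n$--constituent of $V_n$ attached to a partition of height $\geq m$ lies in the $T$--ideal (Young symmetrizer/alternation argument), so the relatively free algebra decomposes as in Formula \eqref{schi} with $ht(\lambda)\leq m-1$; and each $S_\lambda(V)$ is generated, under the $GL(V)$--action (i.e.\ linear substitutions, under which $T$--ideals are stable), by its highest weight vector, which involves only the first $ht(\lambda)\leq m-1$ variables. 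Applying this to $\mathrm{Id}(B)/(\mathrm{Id}(A)\cap \mathrm{Id}(B))$ and symmetrically shows that two $T$--ideals containing $\mathcal C_m$ which agree in $N\geq m-1$ variables coincide; this is exactly the content of Theorem \ref{Kem2} together with Remark \ref{resva}. Some version of this highest--weight (or Young--symmetrizer) argument must be supplied --- ``a standard reduction exploiting the alternation structure'' is precisely the missing proof, not a citation of one.
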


In this paper  we want to concentrate on finite dimensional algebras so we shall from now on assume  that some Capelli identity is satisfied. In this way we do not need to  introduce super--algebras nor apply Theorem \ref{Kem1}. Nevertheless  most results  could be extended to super--algebras in a more or less straightforward way as will be presented in the forthcoming book \cite{AGPR}. 
\subsubsection{Use of Schur--Weyl duality\label{usS}} Recall that, given a vector space $V$ with $\dim V=  k$, on each tensor power  $V^{\otimes d}$  act the general linear group  $GL(V)$ and the symmetric group $S_d$  which span two algebras each the centralizer of the other.  In characteristic 0 both algebras are semi--simple and thus we have the {\em   Schur--Weyl  duality} decomposition in isotypic components, indexed by partitions  $\lambda\vdash d$  of height  $\leq \dim V=k$:   \begin{equation}\label{ScW}
V^{\otimes d}=\oplus_{\lambda\vdash d,\ ht(\lambda)\leq k} S_\lambda(V)\otimes M_\lambda.
\end{equation} The $M_\lambda$ are  the irreducible representations of the symmetric group $S_d$, constructed from the theory of Young symmetrizers, with character $\chi_\lambda$. The modules $S_\lambda(V)$, are irreducible representations of  $GL(V)$ which in fact can be thought of as {\em polynomial functors} on vector spaces called {\em the Schur functors (cf. \cite{P7})}.

We consider the free algebra $F\langle X\rangle$ as the tensor algebra $T(V)$ over an infinite dimensional vector space $V$, with basis the variables $X:=\{x_1,x_2,\ldots\}$ and take   a $T$ ideal $I$.
\begin{remark}\label{slx}
If we want to stress the basis $X$ of $V$ we also write $S_\lambda(V)=S_\lambda(X).$
\end{remark}

Since a $T$--ideal is stable under variable substitutions it is in particular stable under the action of linear group of $V$, that is   $GL(V):=\cup_m GL(m,F)$  and we can decompose   $T(V)/I$  into irreducible representations of  this group  deduced From Formula \eqref{ScW}.

If we assume that  $I$ contains a Capelli identity $C_{m+1}$  (or a Capelli list)
 we have a restriction, deduced from these Capelli identities, on the height of the partitions appearing in $T(V)/I$:  \begin{equation}\label{schi}
T(V)/I:= \oplus_d\oplus_{\lambda\vdash d,\ ht(\lambda)\leq  m} n_\lambda S_\lambda(V).
\end{equation}
One can then apply the theory of highest weight vectors, this theory belongs to the Theory of Lie and algebraic groups. In our case the notion of weight  is just the multidegree in the variables $x_i$. An element   of weight  the sequences $k_1,k_2,\ldots$ is  a  non commutative polynomial homogeneous of degree $k_i$ in each $x_i$.  

Weights are usually equipped with the {\em dominance order} which in Lie theory arises from the theory of roots, in our case simply $k_1,k_2,\ldots$ is greater than $h_1,h _2,\ldots$ in dominance order  if $k_1\geq h_1,\ k_1+k_2\geq h_1+h_2,\ldots, k_1+\ldots +k_i\geq h_1+\ldots +h_i,\ldots $.
\smallskip

Consider in $GL(V) $ the unipotent group $U$ of those (strictly upper triangular) linear transformations of type $x_i\mapsto x_i+\sum_{j<i}a_{i,j}x_i$.

One know that in the  space  $S_\lambda(V)$ the subspace $S_\lambda(V)^U$  of $U$ invariants is  1--dimensional  and generated by an element of {\em weight } $\lambda$  that is homogeneous in each $x_i$ of degree  $h_i$  where $h_i$ is the length of the $i^{th}$ row of $\lambda$, in fact this is a {\em highest weight vector} using the dominance order of weights.  Thus if  the height  of $\lambda$ is $\leq m$  this element depends only upon the first $m$ variables.  On the other hand since $S_\lambda(V)$ is an irreducible representation of $GL(V) $ it is generated by this highest weight vector. One deduces from Theorem \ref{Kem}
\begin{theorem}\label{Kem2} A $T$--ideal $\Gamma\subset F\langle X\rangle$, in countably many variables $X$ is the  ideal of identities of a finite dimensional algebra if and only if it contains a Capelli list.
\end{theorem}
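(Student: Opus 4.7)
The plan is to argue both directions using Theorem \ref{Kem} together with the Schur--Weyl/highest-weight-vector formalism of \S \ref{usS}. For necessity, suppose $\Gamma = Id(A)$ with $\dim_F A = d$. Any evaluation of $C_{d+1}$ on $A$ substitutes $d+1$ vectors into an alternating multilinear form, which must vanish since $\dim A = d$; the same is true of every member of the list $\mathcal C_{d+1}$ obtained by specialising frame variables to $1$. Hence $\Gamma \supseteq \mathcal C_{d+1}$.

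For sufficiency, assume $\Gamma$ contains a Capelli list $\mathcal C_{m+1}$. Pick any $k \geq m+1$ and set $\Gamma_k := \Gamma \cap F\langle x_1,\dots,x_k\rangle$; this is a non--zero $T$--ideal in finitely many variables, still containing a Capelli list. By Theorem \ref{Kem} there exists a finite dimensional algebra $A$ with $\Gamma_k$ equal to the ideal of identities of $A$ in the variables $x_1,\dots,x_k$. I claim $\Gamma = Id(A)$ in countably many variables. Writing $J := Id(A)$, one has $J \cap F\langle x_1,\dots,x_k\rangle = \Gamma_k$, and in particular $J \cap F\langle x_1,\dots,x_m\rangle = \Gamma \cap F\langle x_1,\dots,x_m\rangle$.

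To finish, I would compare $\Gamma$ with $J$ via highest weight vectors. The intersection $\Gamma \cap J$ contains $\mathcal C_{m+1}$, so by \eqref{schi} the quotient $T(V)/(\Gamma \cap J)$ decomposes into $S_\lambda(V)$ with $ht(\lambda) \leq m$, and the same bound applies to the $GL(V)$-submodule $\Gamma / (\Gamma \cap J)$. If this submodule were nonzero, it would contain a $U$--invariant of some weight $\lambda$ with $ht(\lambda) \leq m$. Multihomogeneity of the $T$--ideals involved (closure under the scalings $x_i \mapsto t_i x_i$) lets one represent such a class by a genuine $g \in \Gamma$ of weight $\lambda$, and the discussion of \S \ref{usS} forces any element of weight $\lambda$ to lie in $F\langle x_1,\dots,x_{ht(\lambda)}\rangle \subseteq F\langle x_1,\dots,x_m\rangle$. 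Then $g \in \Gamma \cap F\langle x_1,\dots,x_m\rangle = J \cap F\langle x_1,\dots,x_m\rangle \subseteq J$, so $g \in \Gamma \cap J$, a contradiction. Hence $\Gamma \subseteq J$; swapping the roles of $\Gamma$ and $J$ gives the reverse inclusion, yielding $\Gamma = Id(A)$.

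The main obstacle is the step that extracts a weight-$\lambda$ representative $g \in \Gamma$ from a nonzero class in $\Gamma / (\Gamma \cap J)$ and certifies that it actually lies in $F\langle x_1,\dots,x_m\rangle$. This depends on two essentially standard ingredients: multihomogeneity of $T$--ideals, and the identification (in characteristic zero) of the $U$--invariants of weight $\lambda$ inside a semisimple $GL(V)$-module with polynomials in the first $ht(\lambda)$ variables. Once these are in hand, the descent from countably many variables to $m$ variables is automatic, and Theorem \ref{Kem} supplies the required finite dimensional $A$.
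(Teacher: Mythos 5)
Your argument is correct and is essentially the proof the paper intends (and only sketches): restrict to finitely many variables, invoke Theorem \ref{Kem} there, and use the height restriction \eqref{schi} together with the weight/highest-weight-vector discussion of \S\ref{usS} to pull the identification of the two $T$--ideals back to countably many variables. The one point to repair is the choice of $k$: with only $k\geq m+1$ the truncation $\Gamma_k$ need not contain the full Capelli list $\mathcal C_{m+1}$ (its members involve up to $2m+3$ variables), and you need exactly this to know that $A$ satisfies $\mathcal C_{m+1}$, i.e.\ that $\Gamma\cap J\supseteq \mathcal C_{m+1}$, before applying the height bound to $T(V)/(\Gamma\cap J)$; choosing $k$ at least the number of variables occurring in the list (say $k\geq 2m+3$) makes the rest of your comparison argument go through verbatim.
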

\begin{remark}\label{resva}
This allows us, in order to study the multiplicities $n_\lambda$  to restrict the number of variables, hence $V$  to any chosen finite number  with the constraint to be $\geq m$. 
\end{remark}

\subsubsection{The Kemer index}  A main tool in the Theory of Kemer is given by introducing  a  pair of non negative integers $\beta(\Gamma),\gamma(\Gamma)$, called the {\em Kemer index of $\Gamma$}, invariants of a $T$--ideal $\Gamma$ which contains some Capelli identities.

These numbers give a first measure of which of the spaces $M_{\mu,t,s}(X,W)$  are not entirely contained in $\Gamma$,  (or not polynomial identities of some given algebra).
\begin{definition}\label{betaA} For every $T$--ideal $\Gamma$, which contains some Capelli identities, we let $\beta(\Gamma)$ to be the
greatest integer $t$ such that {\em for every $\mu\in\mathbb N$} there exists a
$\mu$-fold $t$-alternating (in the $\mu$ layers $X_i$ with $t$ elements) polynomial not in $\Gamma$:
$$
f(X_1,\ldots,X_{\mu},Y )\notin \Gamma.
$$
 We then let $\gamma(\Gamma)$ to be the maximum $s\in\mathbb N$ for which there exists, for all $\mu$,  a polynomial
  $
 f(X_1,\ldots,X_{\mu},Z_1,\ldots,Z_s,Y )\notin \Gamma,
 $  alternating in $\mu$ {\em small} layers $X_i$ with $\beta(\Gamma)$ elements and in $s$ {\em big} layers $Z_j$ with $\beta(\Gamma)+1$  elements.\smallskip
   
   The pair  $(\beta(\Gamma),\gamma(\Gamma))$ is the {\em Kemer index} of $\Gamma$ denoted $\mathrm{Ind}\Gamma$.\smallskip

   For an algebra $A$ we define  the Kemer index of $A$, denoted by $\mathrm{Ind}(A)$, to be  the Kemer index of the ideal of polynomial identities of $A$.
\end{definition}
We order the Kemer indices lexicographically and then observe that
\begin{remark}\label{leski}
If $\Gamma_1\subset \Gamma_2$ are two $T$--ideals we have
$$\mathrm{Ind}\Gamma_1\geq \mathrm{Ind}\Gamma_2. $$
If $\Gamma=\cap_{i=1}^k \Gamma_i$ is an intersection of $T$--ideals then
$$\mathrm{Ind}\Gamma=\max \mathrm{Ind}\Gamma_i $$

If $A=\oplus_{i=1}^k A_i$ is a direct sum of algebras 
$$\mathrm{Ind}(A)=\max \mathrm{Ind}(A_i) .$$
\end{remark}
\begin{remark}\label{leski1}
By definition, denoting $s:=\gamma(\Gamma) $, there is a minimum $\mu_0=\mu_0(\Gamma)$  such that there is no  polynomial
  $
 f(X_1,\ldots,X_{\mu},Z_1,\ldots,Z_{s+1},Y )\notin \Gamma 
 $,   alternating in $\mu\geq \mu_0$ layers $X_i$ with $\beta(\Gamma)$ elements and in $s+1$ layers $Z_j$ with $\beta(\Gamma)+1$  elements.\end{remark}
 \begin{definition}\label{KbetaA} A polynomial
  $
 f(X_1,\ldots,X_{\mu},Z_1,\ldots,Z_s,Y )\notin \Gamma 
 $  alternating in $\mu$ layers $X_i$ with $\beta(\Gamma)$ elements and in $s=\gamma(\Gamma)$ layers $Z_j$ with $\beta(\Gamma)+1$  elements with $\mu> \mu_0+1$ will be called a {\em $\mu$--Kemer polynomial}.
 
 A {\em  Kemer polynomial} is by definition a  $\mu$--Kemer polynomial   for some $\mu>\mu_0+1$.\end{definition}
 
 \begin{remark}\label{altKe}
A   Kemer polynomial $
 f(X_1,\ldots,X_{\mu},Z_1,\ldots,Z_s,Y )$, which is also linear in a variable $w$,  not in the layers $ Z_j$, has the following property. Fix one of the layers $X_i$ which does not contain the variable $w$, add to it the variable  $w$ and alternate these $t+1$ variables. If $w$ is also in no layer $X_j$, we produce a polynomial alternating in $\mu-1\geq \mu_0$ layers with $t$ elements and in $s+1$  layers  with $t+1$ elements, otherwise if $w\in X_j, \ i\neq j$ we produce a polynomial alternating in $\mu-2\geq \mu_0$ layers with $t$ elements and in $s+1$  layers  with $t+1$ elements. By the definition of $\mu_0$ and of Kemer polynomial  this is always an element of $\Gamma$, so if $\Gamma=Id(A)$ a polynomial identity of $A$.
\end{remark}\begin{example}\label{KiM}{\upshape
1)\quad If $A=M_n(F)$, then   every
$(n^2+1)$-alternating polynomial is in $\mathrm{Id}(A)$.
Conversely, using Capelli polynomials, \eqref{Capelli},  the product (taken in an ordered way)
\begin{equation}
\label{mulca0}C_{\mu,n^2}(X_1,\ldots,X_\mu,Y):=\prod_{i=1}^{\mu}C_{n^2}(x_{i,1},\ldots,x_{i,n^2}, y_{i,1},\ldots,y_{i,n^2},y_{i,n^2+1}),
\end{equation}$$\quad X_i:=\{x_{i,1},\ldots,x_{i,n^2}\} $$
evaluated in suitable $e_{ij}$'s can take any value
$ e_{hk}$ for every $\mu$ so, in particular,  it is not an identity. Hence $\beta(M_n(F))=n^2,\ \gamma(M_n(F))=0$.
}

2)\quad On the opposite if $A$ is a finite dimensional nilpotent algebra and $A^s\neq 0, A^{s+1}=0$  we see that its Kemer index is $(0,s)$.

\end{example}
In a subtle, and not completely understood way,  all other cases are a mixture of these two special cases.
\begin{remark}\label{altK} We could take a slightly different point of view and define as $\mu$--Kemer polynomials for a $T$ ideal  with Kemer index $t,s$ to be the elements of $M_{\mu,t,s}(X,W)$ which are not in $\Gamma$, then deduce some larger $T$-ideal by evaluations of these polynomials.

\end{remark}
\subsection{Fundamental algebras\label{FuAl}}
We  need to recall, without proofs,  some steps of Kemer's theory and fix some notations.

Let $A$ be a finite dimensional algebra over a field $F$ of characteristic 0, $J:=\mathrm{rad}\, A$ be its Jacobson radical  and let $\overline{A}:=A/J$, a semi--simple algebra.

By Wedderburn's principal Theorem, cf. \cite{Alb},    we can decompose    \begin{equation}\label{Made}
A=\overline{A}\oplus J=R_1\oplus \cdots\oplus R_q\oplus J,\quad R_i\quad \text{simple}
\end{equation}
where, if we assume $F$ algebraically closed, for every $i$ the simple algebra  $R_i$ is isomorphic to $M_{n_i}(F)$ for some
$n_i$.  Due to Example \ref{KiM}  2), we will assume from now on that $\bar A\neq 0$ and call it the {\em semisimple part} of $A$.
 \begin{definition}
\label{tAsA}We set
$t_A:=\dim_F  A/J=\dim_F \overline{A}$, and   $s_A+1$ be the nilpotency index
of $J$, that is $J^{s_A}\neq 0,\ J^{s_A+1}=0$.\smallskip

We call the pair $t_A,s_A$  the {\em $t,s$--index} of $A$.  
\end{definition}It is easily seen that the $t,s$ index  of $A$ is greater or equal than the Kemer index  ${\rm Ind}(A)$, so it is  important to understand when these two indices coincide.\smallskip

With the previous notations consider the quotient map  $\pi:A\to A/J=\oplus_{i=1}^qR_i$ and  let  for $1\leq j\leq q$,   \begin{equation}\label{leali}
R^{(i)}:=\oplus_{i\neq j,\ j=1}^qR_j\quad\text{and}\quad A_i:=\pi^{-1} R^{(i)}\subset A.
\end{equation}  We have for all $i$ that $A_i$ satisfies all polynomial identities of $A$ and $t_{A_i}<t_A$.\smallskip

We need to construct a further algebra  $A_0 $  which satisfies all polynomial identities of $A$ and $t_{A_0 }=t_A$ but  $s_{A_0 }<s_A$.\medskip

This algebra is constructed as  the free product $\bar A\star  F \langle  X \rangle  $  modulo the ideal generated by all polynomial identities of $A$,  for a sufficiently large $X$ (in fact we shall see that $s$ variables suffice, Lemma \ref{samid}), and finally modulo the ideal of elements of degree $\geq s$ in the variables $X$. By definition $t_{A_0 }=t_A,s_{A_0 }=s_A-1$.\smallskip

  We have by construction
 $\mathrm{Id}(A)\subset \cap_{i=1}^ 
q \mathrm{Id}(A_i)\cap \mathrm{Id}(A_0 ),$ 
and if  $\mathrm{Id}(A)=\cap_{i=1}^q \mathrm{Id}(A_i)\cap \mathrm{Id}(A_0 )  $ then  $A$ is PI equivalent to $\oplus_{i=1}^ 
q   A_i \oplus A_0   $.  All the $t,s$--indices of these algebras are strictly less than the $t,s$--index of $A$. This suggests
\begin{definition}\label{FAL1}
We say that $A$ is {\em fundamental} if  \begin{equation}\label{fundeq}
\mathrm{Id}(A)\subsetneq \cap_{i=0}^ 
q \mathrm{Id}(A_i) .
\end{equation}
A  polynomial $f\in  \cap_{i=0}^ 
q \mathrm{Id}(A_i) \setminus \mathrm{Id}(A)$ will be called {\em fundamental}.
\end{definition}
\begin{example}\label{BTM}
An algebra of block triangular matrices is fundamental.
 \end{example} By induction one has that
\begin{proposition}\label{SLM}
Every finite dimensional algebra $A$ is PI equivalent to a finite direct sum of fundamental algebras.
\end{proposition}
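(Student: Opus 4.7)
The plan is to induct on the $t,s$-index $(t_A, s_A)$, ordered lexicographically on $\mathbb{N}\times\mathbb{N}$, which is a well-founded order. In the inductive step I split on whether $A$ is fundamental: if it is, then trivially $A$ is itself a one-term direct sum of fundamental algebras and there is nothing to prove; degenerate cases such as $A=0$ are handled by the empty direct sum.

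If $A$ is not fundamental, then by Definition \ref{FAL1} the containment \eqref{fundeq} is an equality, $\mathrm{Id}(A) = \bigcap_{i=0}^q \mathrm{Id}(A_i)$. Combined with the standard identity $\mathrm{Id}(B\oplus C) = \mathrm{Id}(B)\cap \mathrm{Id}(C)$, this is equivalent to saying that $A$ is PI-equivalent to $\bigoplus_{i=0}^q A_i$. Now for each $i\geq 1$, the algebra $A_i$ is a subalgebra of $A$, hence finite dimensional, and satisfies $t_{A_i}<t_A$, so its $t,s$-index is strictly smaller in lex order. The algebra $A_0$, as constructed, is finitely generated and PI; by Kemer's Theorem \ref{Kem} it is PI-equivalent to a finite-dimensional algebra of $t,s$-index $(t_A, s_A-1)$, again strictly smaller in lex order. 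Applying the induction hypothesis to each summand and concatenating the resulting finite sums of fundamental algebras yields the desired decomposition of $A$.

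The main subtlety will be the handling of $A_0$: as constructed it is a quotient of a free product and is a priori not finite dimensional, so passing to a genuine finite-dimensional PI-equivalent representative requires the full strength of Kemer's Theorem \ref{Kem}. The remaining ingredients—the strict decrease of $(t_A,s_A)$ along each summand and the compatibility of PI-equivalence with finite direct sums—are essentially the content of the discussion preceding Definition \ref{FAL1}, and slot directly into the inductive framework.
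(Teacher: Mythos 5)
Your overall strategy---induction on the $t,s$--index in lexicographic order, splitting $A$ into the algebras $A_1,\ldots,A_q$ and $A_0$ when $A$ is not fundamental, and using $\mathrm{Id}(A)=\bigcap_{i=0}^q\mathrm{Id}(A_i)$---is exactly the paper's. The gap is in your treatment of $A_0$. You replace $A_0$ by a finite dimensional algebra via Theorem \ref{Kem} and then assert that this replacement has $t,s$--index $(t_A,s_A-1)$. Kemer's representability theorem gives no control whatsoever on the $t,s$--index of the representing algebra: the $t,s$--index is not an invariant of PI--equivalence (only the Kemer index is, and it is in general strictly smaller than the $t,s$--index), so the finite dimensional algebra produced by Theorem \ref{Kem} could perfectly well have first index larger than $t_A$, or second index $\geq s_A$. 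At that point the inductive hypothesis no longer applies and the induction breaks down. In addition, in a self-contained development this appeal to Theorem \ref{Kem} is close to circular, since the decomposition into fundamental algebras is normally an ingredient in the proof of representability.

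The detour is also unnecessary, and this is the point of the paper's construction: $A_0$ is already finite dimensional once $X$ is taken finite, which the paper explicitly allows (it remarks that $s$ variables suffice, cf.\ Lemma \ref{samid}). Indeed $\bar A\star F\langle X\rangle$, truncated modulo the elements of degree $\geq s_A$ in $X$, is spanned by finitely many tensor spaces of the form $\bar A^{\otimes i}\otimes V^{\otimes j}$ with $j<s_A$ and $V$ the span of $X$ (see the description of $\mathcal F_{\bar A,s}(X)$ in \S\ref{model}), hence is finite dimensional; its semisimple part is $\bar A$, so $t_{A_0}=t_A$, its radical is the ideal generated by the $x_i$, and a product $x_1\cdots x_{s_A-1}$ survives modulo the (graded) verbal ideal because an evaluation sending the $x_i$ to suitable radical elements of $A$ kills that ideal and is nonzero, so $s_{A_0}=s_A-1$; finally $A_0$ satisfies $\mathrm{Id}(A)$ by construction. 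With this reading of the construction, and with the observation that each $A_i$, $i\geq 1$, has index $(t_{A_i},s_A)$ with $t_{A_i}<t_A$, your induction goes through verbatim and coincides with the paper's argument; as written, however, the step handling $A_0$ is not justified.
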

  
Let us now see the properties of a fundamental multilinear polynomial $f$. By definition we have an evaluation in $A$ which is different from 0.   We may assume that each variable   has been substituted to a semisimple resp. radical element, we call such an evaluation {\em restricted}.

Let us call, by abuse of notation, {\em semisimple} resp. {\em radical} the corresponding variables. \begin{lemma}\label{fulfu}  Take a non--zero restricted evaluation,  in $A$,  of a multilinear fundamental polynomial $f$.
\begin{enumerate} 
\item We then have that there is at least one semisimple variable  evaluated in each $R_i$,  for all $i=1,\ldots,q$ {\em (Property of being full)}.

\item We have exactly $s_A$ radical substitutions {\em (Property K)}. \end{enumerate}
\end{lemma}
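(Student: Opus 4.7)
The plan is to prove the two properties separately. For fullness (Property 1) I would use $f \in \mathrm{Id}(A_j)$ for $j = 1, \ldots, q$; for the upper bound in Property 2 I would use the nilpotency of $J$; for the lower bound I would crucially invoke $f \in \mathrm{Id}(A_0)$.

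The easy halves go as follows. For fullness, multilinearity and $\bar A = R_1 \oplus \cdots \oplus R_q$ let me split each semisimple substitution $a = \sum_k a_k$ with $a_k \in R_k$ into block components; multilinearity and non-vanishing then let me assume each semisimple substitution already lies in a single block $R_k$. If some block $R_j$ were missed by every semisimple substitution, the whole evaluation would lie in the subalgebra $A_j = \pi^{-1}(R^{(j)})$, and $f \in \mathrm{Id}(A_j)$ would yield zero, contradicting non-vanishing. For the upper bound $r \leq s_A$, the Wedderburn--Malcev splitting $A = \bar A \oplus J$ (with $\bar A$ a subalgebra) makes $\bar A \cdot J$ and $J \cdot \bar A$ lie inside $J$. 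Each monomial of $f(a_1, \ldots, a_n)$ is then a product with exactly $r$ factors in $J$; the intervening semisimple factors get absorbed, so the product sits in $J^r$, which is zero whenever $r \geq s_A + 1$.

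The lower bound $r \geq s_A$ is the heart of the argument and uses $f \in \mathrm{Id}(A_0)$. Setting $s = s_A$, $X = \{x_1, \ldots, x_s\}$, and $A_0 = (\bar A \star F\langle X \rangle)/I$ with $I$ the ideal generated by the identities of $A$ together with all $X$-monomials of degree $\geq s$, I would form $\tilde f \in \bar A \star F\langle X\rangle$ by plugging the chosen semisimple substitutions into the corresponding slots of $f$ and placing a distinct generator $x_i$ into each of the $r$ radical slots; by construction $\tilde f$ is $X$-homogeneous of degree $r$. Its image in $A_0$ is an $A_0$-evaluation of $f$ with semisimple arguments in $\bar A \subseteq A_0$ and radical arguments among the $x_i$, so it vanishes because $f \in \mathrm{Id}(A_0)$. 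Both generating sets of $I$ are $X$-graded (identities of $A$ may be taken multihomogeneous in characteristic zero), so $I$ is $X$-graded; for $r < s$ the $X^{\geq s}$ generators contribute nothing in $X$-degree $r$, forcing $\tilde f$ to lie in the $T$-ideal generated by $\mathrm{Id}(A)$ alone. Finally, evaluating $\bar A \star F\langle X\rangle \to A$ via the Wedderburn inclusion together with $x_i \mapsto$ the original radical element sends $\tilde f$ to $f(a_1, \ldots, a_n)$ and annihilates the $\mathrm{Id}(A)$-ideal, yielding $f(a_1, \ldots, a_n) = 0$ and a contradiction.

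The main obstacle will be justifying the $X$-grading step: that $I$ is $X$-graded and hence that its degree-$r$ component is controlled entirely by substitutions into $\mathrm{Id}(A)$ when $r < s$. Once that bookkeeping is cleanly set up, the contradiction reads off immediately by evaluation in $A$.
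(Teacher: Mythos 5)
Your proof is correct: the fullness argument via splitting the semisimple substitutions into single blocks by multilinearity and invoking $f\in\mathrm{Id}(A_j)$, the upper bound on radical substitutions from $J^{s_A+1}=0$, and the lower bound obtained by lifting the evaluation to $\bar A\star F\langle X\rangle$, using $f\in\mathrm{Id}(A_0)$ together with the $X$-gradedness of the verbal ideal of $\mathrm{Id}(A)$ (legitimate in characteristic $0$, where identities can be taken multihomogeneous) and then specializing the $x_i$ back to the original radical elements, is precisely the standard argument on which this lemma rests. The paper states the lemma without proof (it is recalled from Kemer's theory), and your route is the intended one, exploiting exactly the defining memberships $f\in\bigcap_{i=0}^{q}\mathrm{Id}(A_i)\setminus\mathrm{Id}(A)$ and the construction of $A_0$ as the truncated free product.
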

The next result is usually presented in the literature divided in two parts, called the first and second Kemer Lemma.
\begin{theorem}\label{primoK}
A finite dimensional algebra $A$ is fundamental   if and only if its Kemer index equals the $t,s$ index.

In this case, given any  fundamental polynomial $f$, we have $\mu$--Kemer polynomials in the $T$--ideal $\langle f\rangle$ generated by $f$, for every $\mu$.

\end{theorem}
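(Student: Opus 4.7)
The plan is to handle both directions of the equivalence, treating the ``only if'' as an almost immediate consequence of Remark \ref{leski} and concentrating the work on the ``if'' direction, where one must actually produce $\mu$-Kemer polynomials inside $\langle f\rangle$.

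For the easy direction, assume $\mathrm{Ind}(A)=(t_A,s_A)$. If $A$ were not fundamental then $\mathrm{Id}(A)=\bigcap_{i=0}^{q}\mathrm{Id}(A_i)$, and Remark \ref{leski} gives $\mathrm{Ind}(A)=\max_i\mathrm{Ind}(A_i)$. By construction, for $i\geq 1$ the algebra $A_i$ loses the block $R_i$ from the semisimple part (so $t_{A_i}<t_A$), while $A_0$ keeps $t_{A_0}=t_A$ but has $s_{A_0}=s_A-1$. The general bound $\mathrm{Ind}(A_i)\leq(t_{A_i},s_{A_i})$ then yields $\mathrm{Ind}(A)<(t_A,s_A)$ in lexicographic order, contradicting our assumption. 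Hence $A$ is fundamental.

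For the converse, fix a fundamental multilinear polynomial $f\in\bigcap_{i=0}^q\mathrm{Id}(A_i)\setminus\mathrm{Id}(A)$. Lemma \ref{fulfu} provides a non-zero restricted evaluation of $f$ that hits every simple block $R_i\cong M_{n_i}(F)$ in some semisimple variable and uses exactly $s_A$ radical substitutions $r_1,\ldots,r_{s_A}\in J$. I would now amplify $f$ to a $\mu$-Kemer polynomial as follows. For each simple component $R_i$ the ordered Capelli product $C_{\mu,n_i^2}$ of Example \ref{KiM}(1) can be specialised inside $R_i$ to any prescribed matrix unit; taking the product over $i=1,\ldots,q$ gives a polynomial $P_\mu$ that, on suitable restricted evaluations, alternates in $\mu$ layers of size $t_A=\sum_i n_i^2$ and realises a chosen basis of $\bar A$ in each layer. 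One then splices $f$ into $P_\mu$: the semisimple variables of $f$ constitute one further small layer, and each of the $s_A$ radical variables of $f$ is alternated together with one fresh semisimple variable, producing $s_A$ big layers of size $t_A+1$. The resulting multilinear polynomial
$$
F_\mu(X_1,\ldots,X_\mu,Z_1,\ldots,Z_{s_A},Y)\in\langle f\rangle
$$
is the candidate $\mu$-Kemer polynomial.

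The main obstacle is to verify that $F_\mu$ is not an identity of $A$ for every $\mu$. The mechanism is that alternation on a small layer $X_j$ of size $t_A$ behaves like a determinant across all simple blocks and is non-zero precisely when the variables fill a basis of $\bar A$; alternation on a big layer $Z_l$ of size $t_A+1$ must instead involve at least one radical entry, because $\dim_F\bar A=t_A$ forces cancellation on purely semisimple substitutions. The nilpotency bound $J^{s_A+1}=0$ caps the number of radical positions at $s_A$, which matches exactly the number of big layers. One shows that the ``diagonal'' term of the alternated sum, which reproduces the original non-zero evaluation of $f$, survives while all other terms cancel because of the block structure of the Wedderburn decomposition \eqref{Made}. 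This non-vanishing analysis is the combinatorial core of Kemer's first and second lemmas and is where essentially all the work lies; the abstract setup contributes almost nothing. Once $F_\mu\notin\mathrm{Id}(A)$ is established for all $\mu$, we obtain $\mathrm{Ind}(A)\geq(t_A,s_A)$, which combined with the general upper bound gives equality; the final sentence of the theorem is immediate from the construction, since $F_\mu\in\langle f\rangle$ by design.
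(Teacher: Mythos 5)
There is a genuine gap. Note first that the paper does not prove Theorem \ref{primoK} at all: the section in which it appears opens with ``we need to recall, without proofs, some steps of Kemer's theory'', and the statement is exactly the combination of Kemer's First and Second Lemmas, quoted from \cite{kemer}, \cite{AbK}, \cite{AGPR}. Your easy direction is fine: if $\mathrm{Ind}(A)=(t_A,s_A)$ and $A$ were not fundamental, then $\mathrm{Id}(A)=\bigcap_{i=0}^q\mathrm{Id}(A_i)$, Remark \ref{leski} gives $\mathrm{Ind}(A)=\max_i\mathrm{Ind}(A_i)$, and the bound $\mathrm{Ind}(A_i)\leq(t_{A_i},s_{A_i})<(t_A,s_A)$ yields a contradiction. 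But the converse, together with the statement about $\langle f\rangle$, is the entire content of the theorem, and your argument for it is not a proof: you construct a candidate $F_\mu$ and then state that ``one shows that the diagonal term survives while all other terms cancel'', conceding yourself that this ``is where essentially all the work lies''. That unproved non-vanishing claim \emph{is} Kemer's First and Second Lemmas; asserting it is not verifying it, and it cannot be waved through as a formal consequence of the block structure of \eqref{Made}.

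Moreover the construction as described has concrete structural problems. The simple blocks $R_i$ in \eqref{Made} are orthogonal, so the ordered product of the Capelli products $C_{\mu,n_i^2}$ with each factor specialised inside its own block $R_i$ is identically zero: a monomial can pass from one block to another only through a radical bridge, and by Lemma \ref{fulfu} a non-zero restricted evaluation of a fundamental polynomial has exactly $s_A$ radical entries, which in your scheme are already earmarked as the entries of the $s_A$ big layers. You never explain how the same (at most $s_A$) radical elements are to serve simultaneously as inter-block bridges for every one of the $\mu$ small layers and as the big-layer entries; arranging this is precisely where fullness and Property K are used non-trivially in the actual proofs. Similarly, alternating each radical variable of $f$ with $t_A$ fresh semisimple variables may very well annihilate the evaluation; the survival of the original term under this alternation is the Second Kemer Lemma, and the determinant behaviour you invoke is essentially Formula \eqref{esced}, which in the paper is \emph{derived} for Kemer polynomials of fundamental algebras after this theorem is available, so appealing to it here is circular. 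In short, your proposal is a reasonable road map of what must be proved, but the combinatorial core — attaching arbitrarily many $t_A$-alternating layers and $s_A$ big layers to a fundamental polynomial without killing it — is exactly the cited deep result and remains unproved in your argument.
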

\begin{remark}\label{conv} Conversely there is a $\mu$ such that all $\mu$--Kemer polynomials are fundamental.

\end{remark}
We then introduce a definition
\begin{definition}\label{primary} A $T$--ideal $I$  is called {\em primary} if it is the ideal of identities of a fundamental algebra.

A $T$--ideal $I$ is {\em irreducible} if it is not the intersection $I=J_1\cap J_2$ of two  $T$--ideals $ J_1, J_2$  properly containing $I$.

\end{definition}
We then see 
\begin{proposition}\label{primdec}  Every  irreducible $T$--ideal containing a Capelli list is primary and every $T$--ideal containing a Capelli identity is a finite intersection of  primary $T$--ideals.

\end{proposition}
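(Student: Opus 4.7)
The plan is to deduce both halves of the proposition directly from Kemer's representability Theorem \ref{Kem2} together with the fundamental--decomposition Proposition \ref{SLM}. For the second statement, if $\Gamma$ contains a Capelli (list), Theorem \ref{Kem2} produces a finite dimensional algebra $A$ with $\Gamma = \mathrm{Id}(A)$, and Proposition \ref{SLM} writes $A$ as PI--equivalent to a direct sum $B_1\oplus\cdots\oplus B_m$ of fundamental algebras. Hence
$$ \Gamma \;=\; \mathrm{Id}(A) \;=\; \bigcap_{i=1}^{m}\mathrm{Id}(B_i), $$
and by Definition \ref{primary} this exhibits $\Gamma$ as a finite intersection of primary $T$--ideals.

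For the first statement, assume in addition that $\Gamma$ is irreducible, take the decomposition $\Gamma = \bigcap_{i=1}^{m}\mathrm{Id}(B_i)$ produced above, and proceed by induction on $m$. The case $m=1$ is trivial. For $m\geq 2$, split
$$ \Gamma \;=\; \mathrm{Id}(B_1)\;\cap\;\Big(\bigcap_{j=2}^{m}\mathrm{Id}(B_j)\Big); $$
both factors contain $\Gamma$, so the definition of irreducibility forces one of them to equal $\Gamma$. Either $\Gamma = \mathrm{Id}(B_1)$, in which case $\Gamma$ is already primary, or $\Gamma = \bigcap_{j=2}^{m}\mathrm{Id}(B_j)$ is a strictly shorter intersection of primaries, to which the inductive hypothesis applies and finishes the argument.

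The main obstacle lies not in the formal manipulation above --- which is a purely irreducibility/induction argument --- but rather in the two deep results that drive it: Kemer's equivalence between a $T$--ideal containing a Capelli list and the ideal of identities of some finite dimensional algebra (Theorem \ref{Kem2}), and the decomposition into fundamental pieces (Proposition \ref{SLM}). Granting those, the only technical care needed is to match the hypotheses: the second half is stated with \emph{a Capelli identity} rather than \emph{a Capelli list}, which, as Remark \ref{anno} points out, causes no issue in the unital setting but requires the non--unital statement to be read in terms of the full Capelli list so that Theorem \ref{Kem2} is applicable.
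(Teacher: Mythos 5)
Your proposal is correct, and for the first half it is essentially the paper's own argument: represent the irreducible $T$--ideal as $\mathrm{Id}(A)$ via Theorem \ref{Kem2}, decompose $A$ up to PI--equivalence into fundamental summands via Proposition \ref{SLM}, and use irreducibility to collapse the resulting intersection onto a single summand --- your explicit induction on the number of factors just spells out the step the paper states in one line ("since it is irreducible it must coincide with the ideal of polynomial identities of one summand"). Where you genuinely diverge is the second half: the paper first invokes Noetherian induction on $T$--ideals (available because of the Specht theorem, Theorem \ref{Spe}) to write an arbitrary $T$--ideal as a finite intersection of \emph{irreducible} $T$--ideals, and then feeds each irreducible factor into the first half; you instead read the primary decomposition directly off $\Gamma=\bigcap_i \mathrm{Id}(B_i)$, so your proof of the second statement does not use the Specht theorem or irreducibility at all. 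Your route is more economical; the paper's route buys slightly more, namely a decomposition into irreducible (hence, under the Capelli hypothesis, primary \emph{and} irreducible) $T$--ideals, which is what underlies the subsequent discussion of irredundant decompositions. The mismatch between ``Capelli identity'' in the statement and ``Capelli list'' needed for Theorem \ref{Kem2} is glossed over by the paper's proof as well; you are right to flag that in the non--unital setting the hypothesis should be read as containing the full Capelli list.
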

\begin{proof}
By Noetherian induction  every $T$--ideal is a finite intersection  or irreducible $T$--ideals, otherwise there is a maximum one which has not this property and we quickly have a contradiction.

If a $T$--ideal $I$ contains  a Capelli list it is the $T$--ideal  of PI's of a finite dimensional algebra, which by Proposition \ref{SLM} is PI equivalent to a direct sum of fundamental algebras.  Hence $I$ is the intersection of the ideals of polynomial identities of these algebras, since it is irreducible it must coincide with   the ideal of polynomial identities of one summand, a fundamental algebra.
\end{proof}
As in the Theory of primary decomposition  we may define an {\em irredundant}  decomposition  $I=J_1\cap \ldots\cap J_k$ of  a $T$--ideal $I$ in primary ideals.  In a similar way  every finite dimensional algebra  $B$ is PI equivalent to a direct sum  $A=\oplus_iA_i$ of fundamental algebras  which is {\em irredundant}  in the sense that $A$ is not PI equivalent to any proper algebra 
$ \oplus_{i\neq j} A_i$.  We call this an {\em irredundant sum of fundamental algebras}. \smallskip

It is NOT true that the $T$--ideal of PI of a fundamental algebra is irreducible. The following example has been suggested to me by Belov.

Consider the two fundamental  algebras $A_{1,2},\ A_{2,1}$ of block upper triangular $3\times 3$ matrices stabilizing a partial flag formed by a subspace of dimension 1 or 2 respectively. They have both a semisimple part isomorphic to $M_2(F)\oplus F$. By a Theorem of Giambruno--Zaicev \cite{GZ7}  the two $T$--ideals of PI are respectively $I_2I_1$ and $I_1I_2$  where $I_k$ denotes the ideal of identities of $k\times k$ matrices. By a Theorem of Bergman--Lewin \cite{BergLew} these two ideals are different.

Now in their direct sum $A_{1,2}\oplus A_{2,1}$ consider the algebra $L$ which on the diagonal has equal entries in the 2 two by two and in the two one by one blocks.  It is easy to see that $L$ is PI equivalent to $A_{1,2}\oplus A_{2,1}$ and that it is fundamental. Its $T$--ideal is not irreducible but it is $I_2I_1\cap I_1I_2$.

\subsubsection{The role of traces}  Let  $A$ be a fundamental algebra  over a field $L$, with index $t,s$ and  $f$ a $\mu$--Kemer  polynomial for $\mu$ sufficiently large so that $f$ is also fundamental.

It follows from Lemma \ref{fulfu}  that {\em the evaluation in the small layers factors through the radical}.  
   
 So let us fix one of the small layers of the variables, say $x_1,\ldots,x_t$, and denote for simplicity by $Y$ the remaining variables. 
 Thus,  having fixed some evaluation,  which we denote by $\bar Y$, of the variables $Y$ outside the chosen small layer   of variables,  we deduce from $f$ a multilinear  alternating map  from $A/J$  to $A$ in $t$ variables (still denoted by $f$), which  must then have  the form:
 $$ f(x_ 1,x_2, \ldots, x_t, \bar Y)=\det(\bar x_1,\ldots,\bar x_t) u(\bar Y),$$ where the $\bar x_i$ are the classes in $\bar A$ of the evaluations of the variables $x_i$ and  we have chosen a trivialization of $\bigwedge^t \bar A$.
 
  Since $f$  is fundamental  and there are $s$ radical evaluations we also have $u(\bar Y)\in J^s$.

As a consequence we deduce the important identity
\begin{equation}\label{esced}
f(zx_ 1,zx_2, \ldots, zx_t,   Y)=\det(\bar z)f(x_ 1,x_2, \ldots, x_t,  Y)
\end{equation} where $\det(\bar z)$ means the determinant of left multiplication of $\bar z$ on $A/J$.\smallskip

When we polaryze this identity we have on the right hand side the characteristic polynomial and  in particular the identity as functions on $A$:
\begin{equation}\label{TrK}
\sum_{k=1}^{t}
f(x_1,\ldots,x_{k-1},zx_k,x_{k+1},\ldots ,x_t,Y)=\mathrm{tr}(\bar z)f(x_1,\ldots,x_t,Y) ,
\end{equation} where $tr(\bar z)$ is again the trace of left multiplication by $\bar z$  in $A/J$. 

We now observe that:  the left hand side of this formula  is still alternating in all the layers in which $f$ is alternating and not an identity, since for generic $z$ we have $tr(\bar z)\neq 0$, thus it is again a Kemer polynomial.

We can then repeat the argument and  it follows that
\begin{lemma}\label{multtt} The  function
resulting by multiplying a  Kemer polynomial,  evaluated in $ A$,  by a product   $tr(\bar z_1)tr(\bar z_2)\ldots tr(\bar z_k),$ where the $  z_i $ are new variables,  is    the evaluation  in $ A$ of a  new  Kemer polynomial (involving also the variables $z_i$).
\end{lemma}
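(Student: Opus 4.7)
The strategy is to absorb one trace at a time using identity \eqref{TrK} and then iterate. Let $f(x_1,\ldots,x_t,Y)$ be the given Kemer polynomial, with $x_1,\ldots,x_t$ the chosen small layer, and set
$$
f_1(x_1,\ldots,x_t,z_1,Y) \;:=\; \sum_{j=1}^{t} f(x_1,\ldots,x_{j-1},\,z_1 x_j,\,x_{j+1},\ldots,x_t,Y).
$$
By \eqref{TrK} the evaluation of $f_1$ in $A$ equals $\mathrm{tr}(\bar z_1)\cdot f(x_1,\ldots,x_t,Y)$. It then suffices to verify that $f_1$ is again a $\mu$--Kemer polynomial, with $z_1$ absorbed into the ``free'' block $Y$, and then to iterate the construction with $z_2,\ldots,z_k$.

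Three things must be checked for $f_1$: (i)~it is alternating in the same layers as $f$; (ii)~it is not a PI of $A$; (iii)~the Kemer index $(t,s)$ and the parameter $\mu$ are preserved. Point (iii) is immediate because no layer sizes or counts change. Point (ii) follows by choosing a restricted evaluation on which $f$ is nonzero and specialising $z_1$ to any element of $\bar A$ with nonzero trace, using \eqref{TrK} in the forward direction. Point (i) is the substantive one. The other layers $X_2,\ldots,X_\mu$ and $Z_1,\ldots,Z_s$, hidden in $Y$, are untouched, so alternation there is inherited from $f$. For the chosen layer $X_1=\{x_1,\ldots,x_t\}$, I would examine a transposition $(i,j)$: each summand of $f_1$ with index $k\notin\{i,j\}$ is sent to its own negative by the alternation of $f$ in slots $i$ and $j$, while the two summands with $k=i$ and $k=j$ are swapped into the negatives of each other, once one compares the summand with $z_1 x_j$ in position $i$ and $x_i$ in position $j$ with its partner having $x_i$ in position $i$ and $z_1 x_j$ in position $j$, again by alternation of $f$. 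The net effect is $f_1\mapsto -f_1$.

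Once $f_1$ is confirmed to be a $\mu$--Kemer polynomial, the induction is straightforward: the same construction with a new variable $z_2$ applied to any small layer of $f_1$ (for example $X_1$ itself, which is still alternating) yields a Kemer polynomial $f_2$ whose evaluation in $A$ equals $\mathrm{tr}(\bar z_1)\mathrm{tr}(\bar z_2)\cdot f$, and after $k$ iterations one obtains the desired Kemer polynomial whose evaluation is $\mathrm{tr}(\bar z_1)\cdots\mathrm{tr}(\bar z_k)\cdot f$. The main obstacle is the alternation verification for $f_1$ in the layer $X_1$; everything else is a direct consequence of \eqref{TrK} and the definition of Kemer polynomial.
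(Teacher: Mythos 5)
Your proposal is correct and follows essentially the same route as the paper: the paper's proof consists precisely of observing that the left-hand side of Formula \eqref{TrK} is still alternating in all the layers of $f$ and is not an identity (since $\mathrm{tr}(\bar z)\neq 0$ for suitable $z$), hence is again a Kemer polynomial, and then iterating. You merely make explicit the transposition check for alternation in the chosen small layer, which the paper leaves as an observation, so no gap and no genuinely different method.
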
 
 \smallskip

This discussion easily extends to a direct sum $A=\oplus_{i=1}^q A_i$ of fundamental algebras (with radical $J_i$)  over some field $L$ all with the same Kemer index $(t,s)$ getting a formula
\begin{equation}\label{TrK1}
\sum_{i=1}^qt_i (\bar z)f_i(x_1,\ldots,x_t,Y) =\sum_{k=1}^{t}
f(x_1,\ldots,x_{k-1},zx_k,x_{k+1},\ldots ,x_t,Y),
\end{equation}  where $f_i(x_1,\ldots,x_t,Y) $  is the projection to $A_i$ of the  evaluation $f (x_1,\ldots,x_t,Y) $, while $t_i(\bar z)$  is the trace of  left multiplication by the class of $z$ on $A_i$.  In this case if $\mu$ is sufficiently large and $f$ is a $\mu$--Kemer polynomial for one of the algebras $A_i$ then it is either a PI or a $\mu$--Kemer polynomial for each of the $A_j$. It is then convenient to think of $A$ as a module over the direct sum  of $q$ copies of $L$. Then we can write $t(\bar z):=(t_1(\bar z),\ldots, t_q(\bar z))\in L^q,$ and we have:
$$ t  (\bar z)f (x_1,\ldots,x_t,Y) =\sum_{k=1}^{t}
f(x_1,\ldots,x_{k-1},zx_k,x_{k+1},\ldots ,x_t,Y) .$$
Notice furthermore that if $\mu$ is sufficiently large and $f$ is a generic $\mu$--Kemer polynomial it is not a PI for any of the algebras $A_i$.

\subsubsection{$T$--ideals of finite Kemer index.}

We have remarked that a $T$--ideal in the free algebra in countably many variables is of finite Kemer index if and only if it contains some Capelli list.

The following fact could also be obtained from the theory of Zubrilin, \cite{Zubr}.

 \begin{proposition}\label{kemn} If $\Gamma$ is a $T$--ideal with  finite Kemer index $t,s$, there is a $\nu\in\mathbb N$  such that, every $\nu$--Kemer polynomial  $f(X_1,\ldots,X_\nu, Z_1,\ldots,Z_s,W)\in R(X)$ has the property  that, for two extra variables $y,z\notin X$  one has, modulo $\Gamma$
 $$\forall i=1,\ldots,\nu: f(zX_1,\ldots,X_\nu, Z_1,\ldots,Z_s,W)\stackrel{mod.\ \Gamma}= f( X_1,\ldots,zX_i,\ldots,X_\nu, Z_1,\ldots,Z_s,W);$$
 \begin{equation}\label{molti}
f(zyX_1,X_2,\ldots,X_\nu, Z_1,\ldots,Z_s,W)\stackrel{mod.\ \Gamma}=f(yzX_1,X_2,\ldots,X_\nu, Z_1,\ldots,Z_s,W).
\end{equation}
\end{proposition}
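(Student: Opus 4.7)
The plan is to reduce the question to a finite-dimensional algebra $A$ and then read both congruences off the trace identity \eqref{TrK1}. Since $\Gamma$ has finite Kemer index it contains a Capelli list, so by Theorem \ref{Kem2} we can write $\Gamma = \mathrm{Id}(A)$ with $A$ finite dimensional. Using Proposition \ref{SLM}, replace $A$ by an irredundant direct sum $A = \bigoplus_{i=1}^{q} A_i$ of fundamental algebras, and by Remark \ref{leski} discard any summand whose Kemer index is strictly smaller than $(t,s)$; each surviving summand then satisfies $\mathrm{Ind}(A_i) = (t,s) = (t_{A_i}, s_{A_i})$ by Theorem \ref{primoK}. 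Choose $\nu$ to be the maximum over $i$ of the thresholds supplied by Remark \ref{conv} applied to $\mathrm{Id}(A_i)$: then any $\nu$-Kemer polynomial $f$ of $\Gamma$ is, on each $A_i$, either an identity or a fundamental polynomial, so the derivation of \eqref{TrK1} in \S\ref{FuAl} applies to $f$ on all of $A$.

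Interpret the notation $zX_i$ in the statement as the expression $\sum_{k=1}^{t} f(\ldots, x_{i,k-1}, zx_{i,k}, x_{i,k+1}, \ldots)$ appearing on the right of \eqref{TrK1}. Applied to the $i$-th small layer, formula \eqref{TrK1} gives, as functions on $A$,
\begin{equation*}
f(X_1, \ldots, zX_i, \ldots, X_\nu, Z_1, \ldots, Z_s, W) \;=\; t(\bar z) \cdot f(X_1, \ldots, X_\nu, Z_1, \ldots, Z_s, W),
\end{equation*}
where $t(\bar z) = (t_1(\bar z), \ldots, t_q(\bar z))$ acts componentwise on $A = \bigoplus A_i$. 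Since the right-hand side does not depend on which layer $i$ is chosen for the substitution, the first congruence holds modulo $\mathrm{Id}(A) = \Gamma$.

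For \eqref{molti}, apply the same identity twice, first with $z$ replaced by $yz$ and then with $z$ replaced by $zy$ in the first layer:
\begin{equation*}
f(zyX_1, X_2, \ldots) \equiv t(\overline{zy}) \cdot f, \qquad f(yzX_1, X_2, \ldots) \equiv t(\overline{yz}) \cdot f \pmod{\Gamma}.
\end{equation*}
Since $\bar A = \bigoplus_i A_i/J_i$ is semisimple over the base field and the trace of left multiplication on a semisimple algebra is cyclic, $t(\overline{zy}) = t(\overline{yz})$ in the direct sum of the base fields; hence the two right-hand sides agree and the second congruence follows modulo $\Gamma$.

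The principal obstacle is choosing $\nu$ so that all applications of Lemma \ref{fulfu} and the surrounding trace derivation go through uniformly for every $\nu$-Kemer polynomial of $\Gamma$ and every simple summand $A_i$; this is resolved by the finiteness of the number of summands together with the uniform threshold produced by Remark \ref{conv}. A further subtlety to verify is that after the substitution $z \mapsto yz$ or $z \mapsto zy$ the resulting expression is still alternating in the selected layer, which holds automatically because multiplying every variable in one layer by a common element on the left preserves antisymmetry in those variables.
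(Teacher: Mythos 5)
There is a genuine gap, and it comes from your reading of the notation $zX_i$. In the proposition (and in its subsequent use, see Formula \eqref{esc1}), $zX_i$ denotes the substitution in which \emph{every} variable of the small layer $X_i$ is multiplied on the left by $z$, i.e. $f(\ldots,zx_{i,1},\ldots,zx_{i,t},\ldots)$; this operation is homogeneous of degree $t$ in $z$, and that is precisely what later makes $z\mapsto\tilde z$ a multiplicative polynomial map of degree $t$ to which the Ziplies--Vaccarino theorem is applied. You instead redefine $zX_i$ as the polarized sum $\sum_{k}f(\ldots,zx_{i,k},\ldots)$ and verify the congruences for that expression via the trace identity \eqref{TrK1}. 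This proves only the linearization of the claim: Formula \eqref{TrK} is obtained from the determinant identity \eqref{esced} by polarization, and one cannot recover \eqref{esced} from it; so your argument does not establish the asserted equalities $f(zX_1,\ldots)\equiv f(X_1,\ldots,zX_i,\ldots)$ nor \eqref{molti} in their intended (degree $t$) form. The paper's proof uses \eqref{esced} directly: as functions on $B$ the substitution $zX_i$ multiplies $f$ by $\det(\bar z)$, the determinant of left multiplication on $\bar B$, which is independent of the chosen layer, and \eqref{molti} follows from multiplicativity of the determinant, $\det(\overline{zy})=\det(\bar z)\det(\bar y)=\det(\overline{yz})$ --- the correct analogue of your cyclicity-of-trace step.

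A secondary point: you cannot simply ``discard'' the summands of lower Kemer index. Doing so replaces $\Gamma=\mathrm{Id}(B)\cap\mathrm{Id}(C)$ by the larger $T$--ideal $\mathrm{Id}(B)$, and a congruence modulo $\mathrm{Id}(B)$ is a priori weaker than one modulo $\Gamma$. The missing (one-line) reduction, which the paper makes explicitly, is that for $\nu$ large every $\nu$--Kemer polynomial of $\Gamma$ is a polynomial identity of $C$, and all the polynomials being compared are substitution instances of $f$, hence again identities of $C$; therefore their differences already lie in $\mathrm{Id}(C)$, and validity modulo $\mathrm{Id}(B)$ is equivalent to validity modulo $\Gamma$. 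Your choice of $\nu$ via Remark \ref{conv} addresses fundamentality on the summands of $B$, but not this reduction; apart from that, your reduction to a direct sum of fundamental algebras and the splitting $A=B\oplus C$ follow the paper's scheme.
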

\begin{proof}
By Theorem \ref{Kem2}, Proposition \ref{SLM} we know that $\Gamma$ is the ideal of identities of a finite direct sum of fundamental algebras $A=\oplus_{i=1}^m A_i$.  Also  by Remark \ref{leski}  the Kemer index  of $\Gamma$ is the maximum of the Kemer indices of the fundamental algebras $A_i$.  Thus, we may assume that in the list $A_i$ the first $k$ algebras have this maximum Kemer index  and decompose $A=B\oplus C$ where $B=\oplus_{i=1}^k A_i$.  It follows that there is some $\nu\in\mathbb N$ such that any $\nu$--Kemer polynomial for $\Gamma$  is a $\nu$--Kemer polynomial for $B$  while it is a polynomial identity  for $C$.  

Then  Formula \eqref{molti} is valid if and only if it is valid  modulo $\Gamma':={\rm Id}(B)$ that is as functions on  $B$ and this is insured by Formula \eqref{esced}.
\end{proof}

\subsubsection{Generic elements} 

  Let  $A  $ be a  finite dimensional  algebra  over an algebraically closed field $F$, and $\dim\!_FA=n$, fix a basis $a_1,\ldots,a_n$ of $A$.   Given $m\in\mathbb N$  (or $m=\infty$) consider  $L$,   the rational function field  $F(\Lambda)$ where $\Lambda=\{\lambda_{i,j},\ i=1,\ldots,n;\ j=1,\ldots,m\}$ are $nm$ indeterminates.\smallskip

We can construct $m$ {\em generic elements} $\xi_j:=\sum_{i=1}^n\lambda_{i,j}a_i$  for $A$, and  in $A\otimes L$  we construct the  algebra $\mathcal F_A(m)=F\langle\xi_1,\ldots,\xi_m\rangle$ generated by these generic elements. This is clearly isomorphic to the relatively free algebra   quotient of the free algebra modulo the identities of $A$ in $m$ variables.\smallskip

 If $A =\oplus_{i=1}^k A_i $ is a direct sum of finite dimensional  algebras (usually   we shall assume to be fundamental)    let  $J=\oplus J_i$ its radical and $A/J=\oplus_{i=1}^k\bar A_i=A_i/J_i$.    \smallskip

  We  may choose   a basis $a_1,\ldots,a_n$ of $A$ over $F$ union of bases of the summands $A_i$, we may also choose for each summand $A_i$ decomposed as $\bar A_i\oplus J_i$  the basis    to be formed of a basis of $J_i$  and one of $\bar A_i$.  Then when we construct $m$ generic elements   $\xi_j:=\sum_{i=1}^n\lambda_{i,j}a_i$  for $A$,   by the choice of the basis    each is the sum $\xi_j=\sum_{i=1}^k\xi_{j,i}+\eta_{j,i}$  where the $\xi_{j,i}+\eta_{j,i}$ are generic for $A_j$. The $\xi_{j,i} $ are generic for $\bar A_j$, while $\eta_{j,i}$ are generic for the radical $J_j$, and all involve disjoint variables.

For each $j=1,\ldots,k$ we also have the relatively free algebra $\mathcal F_{A_j}(m)=F\langle\xi_{1,j},\ldots,\xi_{m,j}\rangle$   and an injection 
$$ \mathcal F_A(m)\subset  \oplus_{j=1}^k\mathcal F_{A_j} (m)\subset  A\otimes_F L=\oplus_{j=1}^kA_j \otimes_FL.$$

 Notice that the radical of     $   A\otimes_F L $   is $ J \otimes_F L$ and modulo the radical this is $\bar A\otimes_FL$ which is some direct sum of matrix algebras $\oplus_i  M_{n_i}(L)$.

  The projection $p:A\to \bar A=\oplus_{i=1}^k \bar A_i $ of coordinates $p_1,p_2,\ldots,p_k$,  induces a map of algebras generated by generic elements  
$$ p:\mathcal F_A\to \oplus_i  \mathcal F_{\bar A_i}\subset \oplus_i \bar A_i\otimes_FL,\ p:\xi_i\mapsto (\xi_{i,1},\ldots,\xi_{i,k}).$$
\begin{remark}\label{quop} The Kernel of $p$ is a nilpotent ideal while the image is isomorphic to the domain  $\mathcal F_{M_n(F)}$  of generic $n\times n$ matrices where $n$ is the maximum of the degrees  $n_i$ (where $\bar A=\oplus_i  M_{n_i}(F)$).

\end{remark}


We then set \begin{equation}\label{taf}
a\in\mathcal F_A,\quad t(a):=  (t_1(a),\ldots, t_k(a))\in L^{\oplus k}
\end{equation} by setting $t_i(a)$ to be the trace of left multiplication of  the image of $a$, under $p_i$,   in the summand $\bar A_i\otimes_FL$.      We let 
\begin{equation}\label{TAA}
\mathcal T_A(m):=F[t(a)]|_{a\in\mathcal F_A}\subset L^{\oplus k}
\end{equation} be the (commutative) algebra generated over $F$ by all the elements $t(a),\ a\in\mathcal F_A(m)$. From now on we assume $m$ fixed and drop the symbol $(m)$   write simply $\mathcal F_A(m)= \mathcal F_A,\ \mathcal T_A(m)= \mathcal T_A$.
 
\begin{theorem}\label{trcealg}
$\mathcal T_A  $ is a finitely generated  $F$ algebra and  $\mathcal T_A  \mathcal F_A$ is a finitely generated module over $\mathcal T_A  $.
\end{theorem}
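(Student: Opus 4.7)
The plan is to reduce the problem to the classical invariant theory of generic matrices via the semisimple quotient $p$, apply the Artin--Tate lemma to deduce finite generation of $\mathcal T_A$, and establish module finiteness by applying Cayley--Hamilton to the regular representation of $A\otimes_F L$.

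Since $t_i(a)$ depends only on the image $p_i(a)\in\bar A_i\otimes L=M_{n_i}(L)$, the ring $\mathcal T_A$ is intrinsically determined by the block-diagonal image $\bar{\mathcal F}_A:=p(\mathcal F_A)\subset\bar A\otimes L$. The $i$-th coordinate projection $\pi_i:L^{\oplus k}\to L$ sends $\mathcal T_A$ ring-homomorphically onto the classical trace algebra $T_i:=F[t_i(w):w\text{ a word}]$ of $m$ generic $n_i\times n_i$ matrices, and each $T_i$ is a finitely generated $F$-algebra by the theorem of Procesi--Razmyslov. Hence $\bigoplus_i T_i\subset L^{\oplus k}$ (with coordinate-wise operations) is a finitely generated $F$-algebra containing $\mathcal T_A$. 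To upgrade this to finite generation of $\mathcal T_A$ itself, let $e^{(i)}\in L^{\oplus k}$ denote the $i$-th coordinate idempotent; multiplication gives $\mathcal T_A\cdot e^{(i)}=\{(0,\ldots,\pi_i(u),\ldots,0):u\in\mathcal T_A\}$, and surjectivity of $\pi_i|_{\mathcal T_A}$ identifies this with the $i$-th summand of $\bigoplus_i T_i$. Summing over $i$ yields $\bigoplus_i T_i=\sum_i\mathcal T_A\cdot e^{(i)}$, so $\bigoplus_i T_i$ is a finitely generated $\mathcal T_A$-module. The Artin--Tate lemma applied to the chain $F\subset\mathcal T_A\subset\bigoplus_i T_i$ of commutative rings then forces $\mathcal T_A$ to be a finitely generated $F$-algebra.

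For the module finiteness of $\mathcal T_A\mathcal F_A$, I would apply Cayley--Hamilton to the regular representation. Each $a\in\mathcal F_A$ acts by left multiplication as an $L$-linear endomorphism of the $n$-dimensional $L$-algebra $A\otimes L$, where $n=\dim_F A$; its characteristic polynomial is monic of degree $n$ with coefficients expressible as integer polynomials in the Newton power traces $\mathrm{tr}(a^r)$. Because $J\otimes L$ is nilpotent and nilpotent endomorphisms have trace zero, one computes $\mathrm{tr}(a^r)=\sum_i n_i\,t_i(a^r)\in\mathcal T_A$. Hence every element of $\mathcal F_A$ is integral of degree at most $n$ over $\mathcal T_A$, and Shirshov's height theorem, applied to the finitely generated PI algebra $\mathcal F_A$, bounds the length of the monomials in $\xi_1,\ldots,\xi_m$ needed to span $\mathcal T_A\mathcal F_A$ over $\mathcal T_A$, yielding the desired finite generation.

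The conceptual difficulty lies in the first part: one must carefully distinguish $\mathcal T_A$ from the larger rings $\bigoplus_i T_i$ and $F[\bar A^m]^{\bar A^\times}$, and direct degree-bound arguments for each $T_i$ produce only coordinate-wise data that is a priori $i$-dependent. The Artin--Tate device sidesteps this by requiring only the much weaker fact that $\bigoplus_i T_i$ is a finite $\mathcal T_A$-module, which follows immediately from the existence of the coordinate idempotents inside $\bigoplus_i T_i$.
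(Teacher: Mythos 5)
Your second step --- the integrality of $\mathcal F_A$ over $\mathcal T_A$ via the regular representation of $A\otimes L$ --- contains a genuine error. The trace of left multiplication by $a$ on $A\otimes L$ is not $\sum_i n_i\,t_i(a)$: nilpotency of $J\otimes L$ only tells you that the radical component of $a$ contributes nothing, but the semisimple image $\bar a$ still acts on every radical layer $J^j/J^{j+1}\otimes L$, and these layers contribute block traces with multiplicities that have nothing to do with the weights defining $t_i$. Concretely, for $A$ the $2\times 2$ upper triangular matrices (a fundamental algebra, one summand, $\bar A=F\oplus F$) one computes $\mathrm{tr}_{A\otimes L}(L_a)=2a_{11}+a_{22}$, whereas $t(a)=a_{11}+a_{22}$; the function $2a_{11}+a_{22}$ of a generic element does not lie in $\mathcal T_A$, so the coefficients of the characteristic polynomial of $L_a$ on $A\otimes L$ are not elements of $\mathcal T_A$ and your monic equation is not an equation over $\mathcal T_A$. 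Even when every $\bar A_i$ is simple there is a second obstruction: when $k\geq 2$ these coefficients are single scalars in $L$ acting diagonally on $\oplus_i A_i\otimes L$, while $\mathcal T_A\subset L^{\oplus k}$ acts coordinatewise, and a diagonal scalar built from the coordinates $t_i(a^r)$ is in general not an element of $\mathcal T_A$ (already for $A=F\oplus F$ the diagonal pair $(x_1+y_1,x_1+y_1)$ is not in $\mathcal T_A$). The paper avoids both problems by applying Cayley--Hamilton blockwise: for each $i$ it uses the characteristic polynomial $H_{n_i}$ of left multiplication on $\bar A_i\otimes L$ with the \emph{tuple-valued} trace $t(a^j)\in\mathcal T_A$ as coefficients, so that $H_{n_i}(\bar a)$ kills the $i$-th semisimple component; the product $\prod_i H_{n_i}(a)\,a$ then lands in the nilpotent kernel of $p$ and $(\prod_i H_{n_i}(a)\,a)^s=0$ is a genuine monic relation over $\mathcal T_A$ (Formula \eqref{CH}). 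Your Shirshov step is fine, but only once such a relation is in hand, so the module-finiteness assertion is not proved as written.

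Your first step (Artin--Tate) is a genuinely different route from the paper, which gets finite generation of $\mathcal T_A$ directly from the same Shirshov-plus-Cayley--Hamilton bound, using $t(t(a)b)=t(a)t(b)$; the idempotent argument showing $\oplus_i T_i$ is a finite $\mathcal T_A$-module is correct and pleasant. However, your identification of $T_i$ with ``the trace algebra of $m$ generic $n_i\times n_i$ matrices'' is wrong whenever $\bar A_i$ has more than one simple block, which is the typical situation for fundamental algebras (e.g.\ block triangular matrices): there $t_i$ is the weighted combination $\sum_j n_{ij}\,\mathrm{tr}_j$ of the block traces, and $T_i$ is the image of the invariant ring of $\dim_F\bar A_i\times\dim_F\bar A_i$ matrices under restriction to the left regular representation of $\bar A_i$ --- exactly the identification the paper makes later (diagram \eqref{cdWn} and Lemma \ref{coorre}, where $\mathcal T_D$ is the coordinate ring of $W_{n_1,\ldots,n_q}$). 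With that correction each $T_i$ is finitely generated as a quotient of a matrix invariant ring and your Artin--Tate argument goes through; but as stated it rests on a false identification, and in any case it only yields the first half of the theorem.
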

\begin{proof} The proof uses a basic tool of PI theory, the {\em  Shirshov basis}, that is the existence of a finite number $N$ of monomials $a_i$ in the generators $\xi_j$  such that every monomial in the variables $\xi_j$, is a linear combination with coefficients in $F$ of products of powers $a_1^{n_1}a_2^{n_2}\ldots a_N^{n_N}$ \cite{rowen1}.

We first claim that every element   $a\in \mathcal F_A$  satisfies a monic polynomial with coefficients in $\mathcal T_A  $, in fact the coefficients are polynomials in $t(a^j)$ for $j\leq  \max(\dim \bar A_i)$. 

For this let  $n_i:=\dim \bar A_i$. The projection of
$a$  in $\bar A_i\otimes_FL$  satisfies    $H_{n_i}(x) $  where     we take for $H_{n_i}(x)$   the Cayley--Hamilton polynomial induced by left multiplication on  $ \bar A_i\otimes_FL$.  This  is a universal expression in $x$ and the elements $t_i(a^j),\ j\leq n_i$ where  $t_i(a^j)$   is the trace of the   the left action on   $\bar A_i\otimes_FL$ of the projection of $a^j$. 

Thus if we use the   formal Cayley Hamilton polynomial for $n_i\times   n_i$  matrices, but using as trace of $a^j$  the $k$--tuple $t(a^j)=(t_1(a^j),\ldots, t_1(a^j))$  for all $i$ we see that, if $\bar a$ denotes the image of $a$ in $\oplus_i \bar A_i\otimes_FL$ we have $H_{n_i}(\bar a)\bar a \in \oplus_i \bar A_i\otimes_FL$  has 0 in the $i^{th}$ component, so  $\prod_{i=1}^kH_{n_i}(\bar a)\bar a  =0 $ in $p(\mathcal T_A\mathcal F_A)\subset \oplus_i \bar A_i\otimes_FL$. 

Now every element of the Kernel of $p$ is nilpotent of some fixed degree $s$  and finally we deduce that
\begin{equation}\label{CH}
(\prod_{i=1}^kH_{n_i}(a)a)^s =0,\quad\text{in}\quad \mathcal T_A\mathcal F_A. 
\end{equation} We have multiplied by $a$ since we do not assume that the algebra has a 1.

We take a Shirshov basis    for  $\mathcal F_A$
then, since we know that   $t(t(a)b)=t(a)t(b)$, it follows that $\mathcal T_A$ is generated by the traces $t(M)$  where $M$ is a monomial in the Shirshov basis with exponents less that the degree of  $(\prod_{i=1}^kH_{n_i}(x)x)^{s } $ and 
$\mathcal F_A \mathcal T_A$ is spanned over $\mathcal T_A$  by this finite number of monomials. \end{proof} 
\begin{example}\label{genma} A basic example is given by $A=M_t(F)$  the algebra of matrices. In this case the algebra of generic elements is known as the {\em generic matrices}. The  commutative algebra $\mathcal T_A(Y)$  (will be denoted by $\mathcal T_t(Y)$) equals the algebra of invariants  of  $m:=|Y|$  matrices under conjugation and the algebra $\mathcal T_A\mathcal F_A$ is the algebra of equivariant maps  (under conjugation) between $m$--tuples of matrices to matrices.

\end{example}
  
Assume now that $A= \oplus_{i=1}^mA_i$ is a direct sum of fundamental algebras. Decompose $A=B\oplus C$ where we may assume that $B= \oplus_{i=1}^kA_i$ is the sum of the $A_i$ with maximal Kemer index, the same Kemer  index as $A$  and $C$ the remaining algebras.  For $\mu$ sufficiently large a  $\mu$--Kemer polynomial for $A$   is a polynomial  identity on $C$  and either a  Kemer polynomial  or a polynomial identity for $A_i,\ i=1,\ldots,k$. So in this case we call Kemer polynomial for $A$, one   with this property.   By formula \eqref{TrK}, as extended to direct sums, a Kemer  polynomial  evaluated in $\mathcal F_A\subset A\otimes_FL=\oplus_iA_i\otimes_FL$  satisfies  formula \eqref{TrK}  where $tr(\bar z)$  is the element of  $\mathcal T_A$ of Formula \eqref{taf}. 

In fact, since such polynomials vanish on $C$, the formulas factor through $\mathcal F_B, \mathcal T_B$  which are quotients of  $\mathcal F_A,\ \mathcal T_B$.
\smallskip

We can then interpret Lemma \ref{multtt} as
\begin{corollary}\label{kapo0}
The ideal $K_A$ of  $\mathcal F_A$ spanned by evaluations of  Kemer  polynomials, is a  $\mathcal T_A$ submodule and thus a common   ideal in $\mathcal F_A$ and  $\mathcal F_A\mathcal T_A$.

In fact under the quotient map $\mathcal F_A\to \mathcal F_B$  the ideal $K_A$ maps isomorphically to the corresponding ideal $K_B$, in other words the action of $\mathcal F_A\mathcal T_A$ on $K_A$ factors through $\mathcal F_B\mathcal T_B$.
\end{corollary}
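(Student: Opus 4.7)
The plan is to verify the two assertions separately, exploiting Lemma~\ref{multtt} for the $\mathcal T_A$-stability and the defining property of a Kemer polynomial for $A = B \oplus C$ (as a polynomial identity on $C$) for the isomorphism $K_A \cong K_B$.

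For the first assertion, the goal is to show $\mathcal T_A \cdot K_A \subseteq K_A$. Combined with the fact that $K_A$ is by construction a two-sided ideal of $\mathcal F_A$, and that elements of $\mathcal T_A$ act centrally on $A \otimes_F L$ (as block scalars $(\tau_1, \ldots, \tau_k) \in L^{\oplus k}$), this will automatically make $K_A$ a two-sided ideal of $\mathcal F_A \mathcal T_A$. Since $\mathcal T_A$ is generated as an $F$-algebra by the traces $t(a)$ with $a \in \mathcal F_A$, it suffices to show $t(a) \cdot e \in K_A$ whenever $e$ is the evaluation of a Kemer polynomial. This is precisely Lemma~\ref{multtt}: such a product is itself the evaluation in $A$ of a new Kemer polynomial in one more variable. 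Iterating to products of traces, and closing under $F$-linear combinations, yields $\mathcal T_A \cdot K_A \subseteq K_A$.

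For the second assertion, the key observation is that, per the paragraph defining Kemer polynomials for $A = B \oplus C$, every such polynomial is a polynomial identity on $C$. Hence if $f$ is a Kemer polynomial and $\xi$ denotes a tuple of generic elements of $A$, decomposing $\xi = \xi^B + \xi^C$ in $A \otimes L = (B \otimes L) \oplus (C \otimes L)$ yields $f(\xi) = f(\xi^B) + f(\xi^C) = f(\xi^B) \in B \otimes L$. Every generator of $K_A$ therefore lies in $B \otimes L$, and so does the entire ideal $K_A$ (the $C$-components of left and right multipliers from $\mathcal F_A$ kill an element of $B \otimes L$ by block orthogonality). The projection $\pi_B : \mathcal F_A \to \mathcal F_B$ induced by $A \otimes L \to B \otimes L$ is then surjective from $K_A$ onto $K_B$, because the generators of $K_B$ coincide with the $\pi_B$-images of the corresponding Kemer evaluations in $\mathcal F_A$; and injective on $K_A$, because $\pi_B$ acts as the identity on $B \otimes L \supseteq K_A$.

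Once this bookkeeping between $A = B \oplus C$ is in place, the factoring of the $\mathcal F_A \mathcal T_A$-action through $\mathcal F_B \mathcal T_B$ is automatic: the kernels of $\mathcal F_A \to \mathcal F_B$ and $\mathcal T_A \to \mathcal T_B$ correspond to the $C$-components, which annihilate $K_A \subseteq B \otimes L$ under multiplication, so the action descends to the quotients. The only delicate points are the verification that iterated trace multiplications remain Kemer evaluations and the extension from generators to the ideal; both are isolated precisely in Lemma~\ref{multtt} and in the characterisation of Kemer polynomials for direct sums.
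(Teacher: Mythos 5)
Your proof is correct and follows essentially the same route as the paper: the $\mathcal T_A$-stability of $K_A$ is exactly Lemma \ref{multtt} in its direct-sum form (Formula \eqref{TrK1}), and the identification $K_A\cong K_B$ comes from the fact that Kemer polynomials vanish on $C$, so all evaluations lie in $B\otimes_F L$, where the projection to $\mathcal F_B$ acts as the identity and the $C$-components of $\mathcal F_A\mathcal T_A$ annihilate $K_A$. Your write-up merely makes explicit the bookkeeping (centrality of $\mathcal T_A$, surjectivity and injectivity of the projection) that the paper leaves implicit.
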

The importance of this corollary is in the fact that the non--commutative object $K_A$ is, by  Theorem \ref{trcealg},  a finitely generated module over a finitely generated commutative algebra, so we can apply to it all the methods of commutative algebra. This is the goal of the  next sections.  At this point  the ideal $K_A$ depends on $A$ and not only on the $T$--ideal but as we shall see one can also remove this dependence and define an intrinsic object which plays the same role.

\section{The canonical filtration \label{KeTe1}}
\subsection{Rationality and a canonical filtration}
We want to draw some interesting consequences from the  theory developed.\smallskip

Let   $R(X)=R :=F\langle X\rangle/I$ be a relatively free algebra  in a finite number $k$ of variables $X$.

 We have seen that $I$ is the $T$--ideal of identities in $k$ variables of a finite dimensional algebra $A=\oplus_i A_i$ direct sum of fundamental algebras, we may also choose this irredundant.
 
 Choosing such an $A$ we   write $R =\mathcal F_A(X)$ and identify $ \mathcal F_A(X)$ to the corresponding algebra of generic elements of $A$. 

We can decompose this direct sum in two parts $A=B\oplus C$  where $B$ is the direct sum of the $A_i$ with the same Kemer index as $A$  while $B$ the sum of those of strictly lower Kemer index.

We have $\mathrm{Id}(A)=\mathrm{Id}(B)\cap \mathrm{Id}(C)$ and  so  an embedding $\mathcal F_A(X)\subset \mathcal F_B(X)\oplus \mathcal F_C(X)$ of  the corresponding relatively free algebras. Let us drop $X$ for simplicity.\medskip

We can apply to $B$ Theorem  \ref{trcealg}, and embed $\mathcal F_B\subset \mathcal F_B\mathcal T_B$ which is  a finitely generated module over the finitely generated commutative algebra $\mathcal T_B$, (both graded).

Let  $K_0\subset R $  be the $T$ ideal generated by the  Kemer polynomials of $B$ for sufficiently large $\mu$. Since these polynomials  are PI of $C$ it follows that under the embedding  $\mathcal F_A(X)\subset \mathcal F_B(X)\oplus \mathcal F_C(X)$  the ideal $K_0 $ maps isomorphically to the corresponding ideal in  $\mathcal F_B$  which, by  Corollary \ref{kapo0},     is a finite module over $\mathcal T_B$.

Since $K_0\subset R $  is a $T$ ideal the algebra $R_1:= R /K_0$ is also a relatively free algebra, now with strictly lower Kemer index.

  So we can repeat the construction and let $K_1\subset R $ be the $T$ ideal with $K_0\subset K_1$ and $K_1/K_0$ the $T$ ideal in $R_1$  generated by the corresponding Kemer polynomials.

If we iterate the construction, since at each step the Kemer index strictly decreases,  we must stop after a finite number of steps.
\begin{theorem}\label{finKs}\begin{enumerate}
\item We have a   filtration $0\subset K_0\subset K_1\subset\ldots  \subset K_u=R$ of $T$ ideals,  such that $K_{i+1}/K_i$ is the $T$ ideal, in $R_i:=R/K_i$,  generated by the corresponding  Kemer polynomials for $\mu$ suitably large.

\item The Kemer index of  $R_i$ is strictly smaller than that of $R_{i-1}$.
\item Each algebra $R_i$ has a quotient $\bar R_i$  which can be embedded $\bar R_i\subset \bar R_i\mathcal T_{\bar R_i}$  in a finitely generated module over a finitely generated commutative algebra $\mathcal T_{\bar R_i}$ and such that
  $K_{i+1}/K_i$  is mapped injectively to an ideal of $ \bar R_i\mathcal T_{\bar R_i}$. 
  
 \item In particular $K_{i+1}/K_i$ has a structure of a finitely generated module over the finitely generated commutative algebra $\mathcal T_{\bar R_i}$.

\end{enumerate}

\end{theorem}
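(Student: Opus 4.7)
The plan is to iterate the construction sketched in the paragraph just before the statement, treating the Kemer index as a well-founded invariant and using Corollary \ref{kapo0} together with Theorem \ref{trcealg} to extract the finite generation properties. First I realise $R$ as $\mathcal{F}_A$ for an irredundant finite direct sum $A=\oplus_i A_i$ of fundamental algebras; this is legitimate by Theorem \ref{Kem2} (or Proposition \ref{cap}, since $R$ satisfies a Capelli identity — this is where the standing hypothesis of the section is used) together with Proposition \ref{SLM}. Split $A=B\oplus C$ so that $B$ gathers those $A_i$ whose Kemer index equals $\mathrm{Ind}(A)$ and $C$ gathers the rest, and choose $\mu$ large enough that every $\mu$-Kemer polynomial of $B$ is simultaneously fundamental for each summand of $B$ (by Theorem \ref{primoK} and Remark \ref{conv}) and a PI for $C$. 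Define $K_0\subset R$ as the $T$-ideal generated by all $\mu$-Kemer polynomials of $B$.

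For (1) and (2): under $\mathcal{F}_A\hookrightarrow \mathcal{F}_B\oplus \mathcal{F}_C$, the ideal $K_0$ is supported on the $\mathcal{F}_B$ component, so $R_1:=R/K_0$ surjects onto $\mathcal{F}_A/(K_0+\mathrm{Id}(B))\cdot\!\ldots$ — more simply, the Kemer index of $R_1$ is strictly below $\mathrm{Ind}(A)$ in the lex order: if it were not, then by Theorem \ref{primoK} some fundamental algebra $A'$ representing $R_1$ would satisfy $\mathrm{Ind}(A')=\mathrm{Ind}(A)$ and would therefore carry Kemer polynomials outside $\mathrm{Id}(A')$, but every such polynomial (and by Lemma \ref{multtt} and Remark \ref{altKe} every $T$-multiple of one) already sits in $K_0$, a contradiction. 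Now $R_1$ is a relatively free algebra in its own right (quotient of one by a $T$-ideal), so the construction can be repeated to produce $K_1\supset K_0$ with $K_1/K_0$ generated by the Kemer polynomials at the new, strictly lower index. The lex order on Kemer indices below the initial one is well-founded, hence the chain terminates after finitely many steps at $K_u=R$.

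For (3) and (4): at stage $i$, let $A^{(i)}=B^{(i)}\oplus C^{(i)}$ be a fundamental decomposition for $R_i$, with $B^{(i)}$ again the top-index part. Set $\bar R_i$ to be the image of $R_i$ under the projection $\mathcal{F}_{A^{(i)}}\to \mathcal{F}_{B^{(i)}}$, and set $\mathcal{T}_{\bar R_i}:=\mathcal{T}_{B^{(i)}}$. Corollary \ref{kapo0} says precisely that $K_{i+1}/K_i$ maps isomorphically to an ideal of $\bar R_i\mathcal{T}_{\bar R_i}=\mathcal{F}_{B^{(i)}}\mathcal{T}_{B^{(i)}}$, and that this image is a $\mathcal{T}_{B^{(i)}}$-submodule. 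By Theorem \ref{trcealg}, $\mathcal{T}_{B^{(i)}}$ is a finitely generated commutative $F$-algebra and $\mathcal{F}_{B^{(i)}}\mathcal{T}_{B^{(i)}}$ is a finite module over it; since finitely generated commutative $F$-algebras are noetherian (Hilbert basis), every submodule of a finite module over $\mathcal{T}_{B^{(i)}}$ is itself finitely generated, giving (4). The main obstacle is the strict drop of the Kemer index at each step: one must be sure that in forming the quotient $R/K_0$ one actually destroys all witnesses of the top index, which is what forces the choice of "sufficiently large" $\mu$ and the use of Lemma \ref{multtt} (closure of $K_0$ under trace multiplication) so that no trick of multiplying a surviving Kemer polynomial by a trace or by extra variables can revive it after passing to the quotient.
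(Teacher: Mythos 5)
Your proposal is correct and follows essentially the same route as the paper: realise $R$ as $\mathcal F_A$ for an irredundant sum of fundamental algebras, split $A=B\oplus C$ by maximal Kemer index, take $K_0$ generated by the $\mu$-Kemer polynomials of $B$ (which vanish on $C$, so $K_0$ lands isomorphically in $\mathcal F_B\subset\mathcal F_B\mathcal T_B$ via Corollary \ref{kapo0} and Theorem \ref{trcealg}), and iterate using the strict drop of the Kemer index. You in fact supply slightly more detail than the paper on why the index drops and on the Noetherian step for finite generation; the only cosmetic remark is that Lemma \ref{multtt} is not really needed for the index-drop argument, since $K_0$ being a $T$-ideal already handles closure under the relevant operations.
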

\begin{corollary}[Belov \cite{belov0}]\label{ratH} If   $R :=F\langle X\rangle/I$ is  a relatively free algebra  in a finite number of variables $X$, its Hilbert series
$$H_R(t):=\sum_{k=0}^\infty  \dim( R_i)\,  t^i$$  is a rational function of the form
\begin{equation}\label{Hser}
\frac{p(t)}{\prod_{j=1}^N (1-t^{h_j})},\ \quad  h_j\in \mathbb N,\quad p(t)\in\mathbb Z[t].
\end{equation}

\end{corollary}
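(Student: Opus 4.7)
The plan is to reduce the rationality of $H_R(t)$ to the classical Hilbert--Serre theorem by exploiting the finite filtration produced in Theorem \ref{finKs}. Everything in sight carries a natural $\mathbb N$-grading by total degree in the variables $X$: the relatively free algebra $R$ is graded, every $T$-ideal is homogeneous (since $T$-ideals are stable under scalar dilations $x_i\mapsto\lambda x_i$), hence each $K_i$ and each quotient $K_{i+1}/K_i$ inherits a grading, and the commutative algebras $\mathcal T_{\bar R_i}$ are graded because they are generated by traces of (homogeneous) polynomials in the generic elements. The overlying algebras $\bar R_i\mathcal T_{\bar R_i}$ are therefore graded as well.

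The first step is the additivity of Hilbert series along a finite filtration by graded submodules: from
\[
0\subset K_0\subset K_1\subset\cdots\subset K_u=R
\]
one gets $H_R(t)=\sum_{i=0}^{u}H_{K_i/K_{i-1}}(t)$, with the convention $K_{-1}=0$. It is therefore enough to prove that each $H_{K_{i+1}/K_i}(t)$ has the shape \eqref{Hser}, because a finite sum of rational functions of this form can always be put over a common product denominator $\prod_j(1-t^{h_j})$ with numerator in $\mathbb Z[t]$.

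The second step is to upgrade the injection $K_{i+1}/K_i\hookrightarrow \bar R_i\mathcal T_{\bar R_i}$ supplied by Theorem \ref{finKs}(3) to the statement that $K_{i+1}/K_i$ is itself a finitely generated graded $\mathcal T_{\bar R_i}$-module. For this I invoke the Hilbert basis theorem: $\mathcal T_{\bar R_i}$ is a finitely generated commutative $F$-algebra, hence Noetherian; by Theorem \ref{trcealg} the ambient module $\bar R_i\mathcal T_{\bar R_i}$ is finitely generated over $\mathcal T_{\bar R_i}$, hence Noetherian as a module, and so its graded submodule $K_{i+1}/K_i$ is also finitely generated over $\mathcal T_{\bar R_i}$.

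The final step is a direct application of Hilbert--Serre. Pick homogeneous generators of $\mathcal T_{\bar R_i}$ of degrees $h_{i,1},\ldots,h_{i,N_i}$ (positive integers, since $R_{\bar R_i}$ sits in positive degrees); then any finitely generated graded $\mathcal T_{\bar R_i}$-module has Hilbert series of the form $p_i(t)/\prod_{j}(1-t^{h_{i,j}})$ with $p_i(t)\in\mathbb Z[t]$. Summing over $i$ and clearing denominators yields \eqref{Hser}. The only real subtlety — and the one I would watch most carefully — is the compatibility of gradings: one must verify that the grading on $K_{i+1}/K_i$ induced from $R$ agrees with the one induced by the embedding into $\bar R_i\mathcal T_{\bar R_i}$, so that the Hilbert series computed on either side coincide. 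This is automatic because the construction of generic elements and of traces is degree-preserving, but it is the step where a sloppy argument could easily go wrong.
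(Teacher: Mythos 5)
Your proposal is correct and follows essentially the same route as the paper: decompose $H_R(t)$ along the filtration of Theorem \ref{finKs}, note that each quotient $K_{i+1}/K_i$ is a finitely generated graded module over the finitely generated graded commutative algebra $\mathcal T_{\bar R_i}$, apply Hilbert--Serre to each piece, and sum. Your extra care about homogeneity of $T$--ideals, the Noetherian argument recovering part (4) of Theorem \ref{finKs} from part (3), and the compatibility of gradings only makes explicit what the paper leaves implicit.
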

\begin{proof}
Clearly $H_R(t)=\sum_{i=0}^{u-1} H_{K_{i+1}/K_i}(t)$. We know that $K_{i+1}/K_i$ is a finitely generated module over a finitely generated graded algebra $\mathcal T_{R_i}$.

If $\mathcal T_{R_i}$  is generated by some elements $a_1,\ldots, a_m$  of degrees $h_i$  by commutative algebra one has that 
$$H_{K_{i+1}/K_i}(t)=\frac{p_i(t)}{\prod_{j=1}^m (1-t^{h_j}) },\quad p_i(t)\in\mathbb Z[t].$$ Summing this finite number of rational functions we have the result.
\end{proof}
In Theorem \ref{dimRRR}
and Corollary \ref{dimRRR1} we will apply a  deeper geometric analysis in order to compute from the Hilbert series the dimension of  the relatively free algebras.\smallskip

One can generalise these results by considering a relatively free algebra in $k$ variables $X $ as multi-graded by the degrees of the variables $X $, and then we write its generating series  of the multi-grading
\begin{equation}\label{hgras}
H_R(x)=\sum_{h_1,\ldots,h_k}\dim( R_{h_1 \ldots  h_k})x_1^{h_1}\ldots x_k^{h_k}
\end{equation}this is of course the graded character of the induced action of the torus of diagonal matrices (which is a standard choice for a maximal torus  of the general linear group $GL(k)$) acting linearly on the space of variables. 

Thus  the series of Formula \eqref{hgras}  should be interpreted in terms of the representation Theory of  $GL(k)$.  In each degree $d$  the homogeneous part $R_d$  of the algebra $R$ is some quotient of the representation $V^{\otimes d},\ \dim V=k$.

By Schur--Weyl  duality discussed in \S \ref{usS}  and by Formula \eqref{schi}  we have $$R_d=\oplus_{\lambda\vdash d} m_\lambda S_\lambda(V),\quad  m_\lambda\leq  \chi_\lambda(1).$$ That is  $m_\lambda$ is $\leq $ the dimension of the corresponding irreducible representation of the symmetric group $S_d$ and in fact, by Remark \ref{resva}:
\begin{proposition}\label{icocc}
If the number of variables is larger than $m$ where $R$ satisfies the Capelli list $\mathcal C_m$  we have that $m_\lambda$ equals the multiplicity of $\chi_\lambda$ in the cocharacter of $A$ of Definition \ref{coch}.
\end{proposition}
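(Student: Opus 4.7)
The plan is to identify the multilinear (weight-$(1,\ldots,1)$) subspace of $R_n$ with $V_n/(V_n\cap \mathrm{Id}(A))$ and then read off the cocharacter multiplicities from the Schur--Weyl decomposition.

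First I would pass to a countable variable set $\tilde X\supset X$ and form $\tilde R:=F\langle\tilde X\rangle/\mathrm{Id}(A)$. By Remark \ref{resva}, for every $\lambda$ with $ht(\lambda)\leq m$ the multiplicity of $S_\lambda(\tilde V)$ in $\tilde R$ coincides with the multiplicity $m_\lambda$ appearing in $R$, because a highest weight vector of weight $\lambda$ only involves the first $ht(\lambda)\leq m$ variables and the hypothesis $|X|>m$ guarantees that all such vectors already live in $R$. So I may harmlessly compute multiplicities inside $\tilde R$.

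Next I would identify $V_n/(V_n\cap\mathrm{Id}(A))$ with the weight-$(1,\ldots,1)$ subspace of $\tilde R_n$ in the first $n$ variables; by Definition \ref{coch} its $S_n$-character is $\chi_n(A)$. Applying \eqref{schi} to $\tilde R$ gives
$$\tilde R_n=\bigoplus_{\lambda\vdash n,\ ht(\lambda)\leq m}m_\lambda\,S_\lambda(\tilde V),$$
and extracting the weight $(1,\ldots,1)$ on both sides yields the $S_n$-equivariant decomposition
$$V_n/(V_n\cap\mathrm{Id}(A))=\bigoplus_\lambda m_\lambda\,\bigl(S_\lambda(\tilde V)\bigr)^{(1,\ldots,1)},$$
where the $S_n$-action on the right-hand side comes from $S_n\hookrightarrow GL(\tilde V)$ permuting the first $n$ basis vectors, and this coincides with the $S_n$-action on $V_n$ by relabelling variables.

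The main step, and main obstacle, is the classical identification $(S_\lambda(\tilde V))^{(1,\ldots,1)}\cong M_\lambda$ as $S_n$-modules. This is the multilinear content of Schur--Weyl duality: on the weight-$(1,\ldots,1)$ subspace of $\tilde V^{\otimes n}$, the action of $S_n\hookrightarrow GL(\tilde V)$ by relabelling basis vectors and the action of $S_n$ by permuting tensor positions become two commuting copies of the regular representation of $S_n$, and matching the bimodule decomposition $FS_n=\bigoplus_\lambda M_\lambda\otimes M_\lambda$ with the Schur--Weyl decomposition $\tilde V^{\otimes n}=\bigoplus_\lambda S_\lambda(\tilde V)\otimes M_\lambda$ forces the desired isomorphism. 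Although standard, this step is the delicate one, since one must keep straight the two distinct commuting $S_n$-actions on $\tilde V^{\otimes n}$. Combining everything gives $\chi_n(A)=\sum_\lambda m_\lambda\chi_\lambda$, which is the claim.
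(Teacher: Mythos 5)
Your argument is correct and follows exactly the route the paper intends: the paper leaves this proposition essentially unproved, citing only Remark \ref{resva} and the Schur--Weyl discussion of \S\ref{usS}, and your write-up fills in precisely those two ingredients (reduction of multiplicities to finitely many variables via highest weight vectors, and the identification of the weight-$(1,\ldots,1)$ subspace of $S_\lambda(\tilde V)$ with $M_\lambda$ as $S_n$-modules). The bimodule comparison of the two commuting $S_n$-actions on the multilinear part of $\tilde V^{\otimes n}$ is the standard way to make that identification rigorous, so nothing further is needed.
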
 

The character of  $S_\lambda(V)$ is the corresponding Schur function  $S_\lambda(x_1,\ldots,x_k)$ (a symmetric function) so that finally we have
\begin{equation}\label{hgrass}
H_R(x)=\sum_{d}\sum_{\lambda \vdash d}m_\lambda S_\lambda(x_1,\ldots,x_k).
\end{equation}
We have seen that the numbers $m_\lambda$ are the multiplicities of the cocharacters, so it will also be interesting to write directly a generating function for   these multiplicities. 
\begin{equation}\label{hgras1}
\bar H_R(t_1,\ldots,t_k)=\sum_{d}\sum_{\lambda \vdash d}m_\lambda t^\lambda.
\end{equation}  This has to be interpreted using the theory of highest weights. Recall that a   highest weight vector    is an element of a given representation of   $G=GL(k)$  which is invariant under $U$ and it is a weight vector under the maximal torus of diagonal matrices $(x_1,\ldots,x_k)$.

 The weight is then a {\em dominant weight}  sum of  of the fundamental weights $\omega_i:=\prod_{j=1}^i x_j$ which is the highest weight of $\bigwedge^i F^k$.  If $m_i$ is the number of columns  of length $i$ of the partition $\lambda$, then the corresponding dominant weight is $\sum_i m_i\omega_i$ that is the character $\prod_{i=1}^k(\prod_{j=1}^i x_j)^{m_i}$, we identify partitions with dominant weights and thus write $$\lambda=\sum_i m_i\omega_i,  \ t_i:=t^{\omega_i},\ t^\lambda=\prod_i t_i^{m_i}.$$ A highest weight vector $v_\lambda$ of weight $\lambda$ generates an irreducible representation $S_\lambda(F^k)$ and $  S_\lambda(F^k)^U $ is 1--dimensional  spanned by $v_\lambda$.  \smallskip

In the next paragraphs, using the Theory of highest weight vectors we shall show that also these two  functions (of Formulas \eqref{hgrass} and \eqref{hgras1}) are rational and of special type and  connected to the Theory of partition functions.

Finally in Theorem \ref{maxcK} we will apply this theory to give a precise quantitative result on the growth of the colength of $R$.
\subsection{A  close look at the filtration}
{\em Our next goal is to show that the commutative rings $\mathcal T_{R_i}$, which we have deduced from some fundamental algebras  can be derived formally only from properties of the $T$--ideals, for this we need to recall the theory of polynomial maps.}
\subsubsection{Polynomial maps}
The notion of polynomial map is quite general and we refer to Norbert Roby,  \cite{Roby} and \cite{Roby1}.

A polynomial map, homogeneous of degree $t$, $f: M\to N $  between two vector spaces factors through  the map $m\mapsto m^{\otimes t}$  to the symmetric tensors $S^t(M):=  (M^{\otimes t})^{S_t}$ and a linear map $S^t(M)\to N$. If $A$ is an algebra  both $  A^{\otimes t}  $ and $S^t(A)=(A^{\otimes t})^{S_t}$ are algebras and we have the following general fact.

\begin{definition}\label{multipo}
A polynomial map, homogeneous of degree $t$, between two algebras $A,B$,   is said to be {\em multiplicative} if $f(ab)=f(a)f(b),\ \forall a,b\in A$.
\end{definition} \begin{proposition}\label{mull}[Roby \cite{Roby1}]
Given a multiplicative   polynomial map, homogeneous of degree $t$, between two algebras $A,B$, then  the induced map $S^t(A)\to B$ is an algebra homomorphism.
\end{proposition}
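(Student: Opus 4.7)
The plan is to factor the polynomial map through its universal linearization and reduce the homomorphism identity to a computation on pure powers $a^{\otimes t}$, then use polarization (valid because $\Char F = 0$) to extend.

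First, I would recall the factorization that defines $\tilde f$. By the universal property of the polynomial map $a\mapsto a^{\otimes t}$, any homogeneous polynomial map $f:A\to B$ of degree $t$ is of the form $f(a)=\tilde f(a^{\otimes t})$ for a unique linear map $\tilde f:S^t(A)\to B$, where $S^t(A)=(A^{\otimes t})^{S_t}$ is given the algebra structure inherited from the componentwise product on $A^{\otimes t}$. The crucial compatibility is
\[
a^{\otimes t}\cdot b^{\otimes t} \;=\; (ab)^{\otimes t}\qquad\forall\,a,b\in A,
\]
which is immediate from the componentwise definition of the product on $A^{\otimes t}$.

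Second, I would verify the multiplicativity of $\tilde f$ on pure powers. Combining the factorization, the identity above, and the hypothesis $f(ab)=f(a)f(b)$, one computes
\[
\tilde f\bigl(a^{\otimes t}\cdot b^{\otimes t}\bigr)=\tilde f\bigl((ab)^{\otimes t}\bigr)=f(ab)=f(a)f(b)=\tilde f(a^{\otimes t})\,\tilde f(b^{\otimes t}).
\]
Thus the desired identity $\tilde f(xy)=\tilde f(x)\tilde f(y)$ holds for every pair $(x,y)$ of pure powers.

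Third, I would pass from pure powers to arbitrary symmetric tensors. Both sides of $\tilde f(xy)=\tilde f(x)\tilde f(y)$ are bilinear in $(x,y)\in S^t(A)\times S^t(A)$, so it suffices to know the identity on a spanning set. Here is the only real (and entirely routine) point: in characteristic zero the pure powers $\{a^{\otimes t}:a\in A\}$ span $S^t(A)$. This is the classical polarization identity
\[
\sum_{\sigma\in S_t} a_{\sigma(1)}\otimes\cdots\otimes a_{\sigma(t)}
\;=\;\tfrac{1}{t!}\sum_{\varnothing\ne I\subseteq\{1,\dots,t\}}(-1)^{t-|I|}\Bigl(\textstyle\sum_{i\in I}a_i\Bigr)^{\otimes t},
\]
which expresses every symmetrized tensor, hence every element of $S^t(A)$, as an $F$-linear combination of pure powers; this requires inverting $t!$, which is where characteristic zero enters. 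Bilinear extension then yields $\tilde f(xy)=\tilde f(x)\tilde f(y)$ on all of $S^t(A)$, proving that $\tilde f$ is a (non-unital) algebra homomorphism.

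The only step that is not purely formal is the polarization spanning property in the third paragraph, and even that is standard; the characteristic-zero hypothesis of the paper is exactly what makes it go through.
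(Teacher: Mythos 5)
The paper does not prove this proposition at all: it is quoted as Roby's theorem with a citation to \cite{Roby1}, so there is no internal proof to match yours against. Your argument is a correct, self-contained proof in the characteristic-zero setting of the paper, and its route --- factor $f$ as $f(a)=\tilde f(a^{\otimes t})$, use $(ab)^{\otimes t}=a^{\otimes t}\cdot b^{\otimes t}$ for the componentwise product on $A^{\otimes t}$ to get multiplicativity on pure powers, then extend by bilinearity once pure powers span $S^t(A)$ --- is exactly the standard way Roby's general statement trivializes in characteristic $0$ (Roby himself works with divided powers $\Gamma^t$, which coincide with $S^t=(A^{\otimes t})^{S_t}$ only when $t!$ is invertible; that is what your spanning step replaces). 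Two small inaccuracies, neither fatal: (i) your displayed polarization identity should not carry the factor $\tfrac1{t!}$ --- already for $t=2$ one has $(a_1+a_2)^{\otimes 2}-a_1^{\otimes 2}-a_2^{\otimes 2}=a_1\otimes a_2+a_2\otimes a_1$ with no $\tfrac12$ --- so as written the formula is off by $t!$, though the conclusion you draw from it (every symmetrization is an integral combination of pure powers) is exactly what the corrected identity gives; (ii) characteristic zero is not needed for that identity itself but for the preceding step you pass over silently, namely that the full symmetrizations span $(A^{\otimes t})^{S_t}$ (the natural basis of the invariants consists of orbit sums, which are symmetrizations divided by stabilizer orders); in characteristic $p$ pure powers genuinely fail to span, which is why the general statement requires $\Gamma^t$. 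With the constant removed and that spanning remark made explicit, your proof is complete and arguably more informative to a reader than the bare citation in the text.
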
 \smallskip

We can apply this  theory  to $A$  equal to  a free algebra $ F\langle Y\rangle $,   a positive integer $t$ and   the subalgebra of symmetric tensors  $S^t (F\langle Y\rangle) =( F\langle Y\rangle ^{\otimes t} )^{S_t}$.  We  can treat  the map $z\mapsto  z^{\otimes t}$ as a {\em universal multiplicative polynomial map}, homogeneous of degree $t$.

%
%

A general theorem of  Ziplies \cite{Ziplies1}  interpreting the Second Fundamental theorem of matrix invariants of Procesi and Razmyslov, that is the Procesi-Razmyslov theory of trace identities \cite{procesi2}, \cite{razmyslov1, razmyslov2}, states the following:
\begin{theorem}[Ziplies]\label{ZiVa} The maximal abelian quotient of  $S^t (F\langle Y\rangle)$ is isomorphic to the algebra $\mathcal T_t(Y)$ of  invariants of $t\times t$ matrices in the variables $Y$.\smallskip

({\em Vaccarino})\quad The previous isomorphism is induced by the explicit multiplicative map  $$\det:F\langle Y\rangle\to \mathcal T_t(Y),\quad \det:f(y_1,\ldots,y_m)\mapsto \det f(\xi_1,\ldots,\xi_m),$$ where the $\xi_i$ are generic $t\times t$ matrices.
\end{theorem}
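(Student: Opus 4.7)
The plan is to realise the asserted isomorphism through Vaccarino's explicit $\det$ map and then identify source and target using the two fundamental theorems of matrix invariants in characteristic zero.

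First I would verify that the map $\det \colon F\langle Y\rangle \to \mathcal T_t(Y)$, $f \mapsto \det f(\xi_1,\ldots,\xi_m)$, is multiplicative polynomial of degree $t$. Multiplicativity is the Cauchy formula $\det(fg)(\xi) = \det f(\xi)\cdot \det g(\xi)$, and homogeneity of degree $t$ follows because the determinant of a $t\times t$ matrix is polynomial of degree $t$ in the matrix entries while the substitution $y_i\mapsto \xi_i$ is linear. By Proposition \ref{mull} of Roby, this produces an algebra homomorphism $\tilde\phi\colon S^t(F\langle Y\rangle) \to \mathcal T_t(Y)$. Since $\mathcal T_t(Y)$ is commutative, $\tilde\phi$ factors through the maximal abelian quotient, yielding a homomorphism $\phi\colon S^t(F\langle Y\rangle)^{\mathrm{ab}} \to \mathcal T_t(Y)$ which realises Vaccarino's description.

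For surjectivity I would invoke Procesi's first fundamental theorem of matrix invariants: in characteristic zero $\mathcal T_t(Y)$ is generated as an $F$-algebra by traces $\operatorname{tr}(M)$ of monomials $M$ in the generic matrices $\xi_i$. These traces, and more generally all coefficients of characteristic polynomials, can be recovered from determinants by polarisation: expanding $\det(\alpha_0 + \alpha_1 g_1 + \cdots + \alpha_k g_k)(\xi)$ as a polynomial in the formal scalars $\alpha_i$ produces, after extracting multilinear components (legitimate in characteristic zero), every elementary symmetric function in the eigenvalues of sums of monomials, in particular all traces. Since each such coefficient lies in the image of $\phi$, the map is surjective.

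The hard part, and the main obstacle, is injectivity. Here I would appeal to the Procesi--Razmyslov second fundamental theorem on trace identities, which asserts that every relation among traces of generic $t\times t$ matrices is a consequence of the full polarisation of the Cayley--Hamilton identity of degree $t$. The remaining task is to check that these trace relations are already built into the abelianisation $S^t(F\langle Y\rangle)^{\mathrm{ab}}$. The key point is that in $(F\langle Y\rangle^{\otimes t})^{S_t}$, the symmetrised tensor $\sum_{\sigma\in S_t}a_{\sigma(1)}\otimes\cdots\otimes a_{\sigma(t)}$ corresponds, after passing to the abelian quotient, to the mixed-determinant polarisation of $\det f$, and a skew-symmetric sum over $S_{t+1}$ of such tensors vanishes tautologically in the $t$-fold symmetric construction. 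Translating this vanishing through $\phi$ gives exactly the multilinearised Cayley--Hamilton identity, so every Procesi--Razmyslov trace relation already holds in $S^t(F\langle Y\rangle)^{\mathrm{ab}}$; hence $\ker\phi$ is zero and $\phi$ is an isomorphism. Making the combinatorial matching between the $S_t$-averaging defining $S^t$ and the polarised Cayley--Hamilton identity precise is, as expected, the technical heart of the argument.
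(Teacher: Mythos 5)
A contextual note first: the paper does not prove Theorem \ref{ZiVa} at all — it quotes it from Ziplies \cite{Ziplies1} and Vaccarino \cite{Vacc} — so your attempt can only be judged on its own terms. Your architecture is the right one and matches how the result is usually understood: Roby's Proposition \ref{mull} applied to the multiplicative degree-$t$ law $f\mapsto\det f(\xi)$ gives the homomorphism $\phi$ on the abelianisation; the first fundamental theorem plus polarisation (legitimate over an infinite field of characteristic $0$, using the identity element to expand $\det(\alpha_0+\alpha_1g_1+\cdots)$ and extract coefficients of characteristic polynomials, hence traces) gives surjectivity; and the Procesi--Razmyslov second fundamental theorem is indeed the key input for injectivity.

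The gap is in the injectivity step, and it is twofold. First, knowing that ``every trace relation holds among the trace-like elements of $S^t(F\langle Y\rangle)^{\mathrm{ab}}$'' does not by itself give $\ker\phi=0$. What it gives, after presenting $\mathcal T_t(Y)$ as the polynomial ring on symbols $\mathrm{tr}(M)$ ($M$ monomials up to cyclic equivalence) modulo the $T$-ideal generated by the polarised Cayley--Hamilton relation, is a homomorphism $\psi:\mathcal T_t(Y)\to S^t(F\langle Y\rangle)^{\mathrm{ab}}$ with $\phi\circ\psi=\mathrm{id}$, i.e.\ a splitting. To conclude injectivity of $\phi$ you still must prove that $\psi$ is surjective, i.e.\ that $S^t(F\langle Y\rangle)^{\mathrm{ab}}$ is generated as an algebra by the polarised trace elements $T(M)$ — equivalently that each class $a^{\otimes t}$ (these span $S^t$ linearly in characteristic $0$) is a polynomial in the $T(M)$, a Newton-identities statement that has to be verified inside the abelianisation, not merely after applying $\phi$. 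Your outline never addresses this step. Second, the assertion that the skew-symmetric sum over $S_{t+1}$ ``vanishes tautologically in the $t$-fold symmetric construction'' is not tautological: the $t+1$ arguments are not placed in distinct tensor slots, they are multiplied along the cycles of each permutation, so no pigeonhole argument applies directly; establishing this vanishing upstairs in $S^t(F\langle Y\rangle)^{\mathrm{ab}}$ (rather than in $\mathcal T_t(Y)$, where it holds simply because matrices are $t\times t$) is precisely the hard content of Ziplies's theorem. Relatedly, ``translating this vanishing through $\phi$'' argues in the wrong direction: showing that the image under $\phi$ of an element vanishes says nothing about the element itself. So the plan is sound in outline, but the decisive combinatorial computation and one structural step of the injectivity argument are missing.
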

The second part is due to Vaccarino, \cite{Vacc}.\footnote{At the moment the Theorem  is  proved only in characteristic 0.}

 \subsubsection{Kemer polynomials} The proof of Corollary \ref{kapo0}  is based on the fact that   a  $T$ ideal $\Gamma$  can be presented as ideal of identities of a finite dimensional algebra  $A$  direct sum of fundamental algebras.
 
 We now want to show that this structure of the $T$ ideal of Kemer polynomials is independent of $A$ and thus gives some information on the possible algebras $A$ having $\Gamma$ as ideal of identities.
 
 Denote by $R(Y)$ and $R(Y\cup X)$ the relatively free algebras in the variety associated to $\Gamma$  in these corresponding variables. 
 Let us first use an auxiliary algebra $A$ and let $K_A\subset R(Y\cup X)$ be the space of Kemer polynomials previously defined starting from $A$.  Changing the algebra $A$ to some $A'$ may change the space $K_A$  but  only for the $\mu$--Kemer polynomials up to some  $\mu_1$,  we shall see soon how to free ourselves from this irrelevant constraint. 
Let $K_\nu$ be the space of $\nu$--Kemer polynomials in the relatively free algebra  $ F\langle X\rangle /\Gamma $ in which the small layers are taken from the variables in $X$ (and may depend also on the variables $Y$).

By Proposition \ref{kemn}  there is an intrinsic $\nu\in\mathbb N$  such that if $f(X_1,\ldots,X_\nu, W)\in K_\nu$
we interpret  Formula  \eqref{molti}   as follows.  If $z\in F\langle Y\rangle $ we have a linear map which we shall denote by $\tilde z$ given by (choosing one of small layers):
\begin{equation}\label{esc1}
\tilde z:  f( x_ 1, x_2, \ldots,  x_t, X,  W)\mapsto f(zx_ 1,zx_2, \ldots, zx_t, X,  W) \quad mod.\ \Gamma.
\end{equation}   The operators $\tilde z$ do not depend on the small layer chosen and commute and  the map    $z\mapsto \tilde z$,   is a multiplicative polynomial map  homogeneous of degree $t$ from the free algebra $ F\langle Y\rangle $   to a  commutative algebra of linear operators.\smallskip

 Therefore   the Theorem  of  Zieplies--Vaccarino \ref{ZiVa}, tells us that Formula   \eqref{esc1}  defines a module structure on $K_\nu$  by the algebra $\mathcal T_t(Y)$ of invariants of $t\times t$ matrices in the  variables $Y$. 
 
 This module structure does not depend on $A$ and thus is independent of the embedding of  the relatively free algebra in $A\otimes L$.
 
 On the other hand,   choose $A=\oplus_i^kA_i$  so that $\Gamma={\rm Id}(A)$, a direct sum of fundamental algebras, and $A=B\oplus C$ with $B=\oplus_i^uA_i$ the direct sum of the $A_i$ with maximal Kemer index $(t,s)$  equal to the Kemer index of $\Gamma$.
 
 If  $\nu$ is sufficiently large     we know that $K_\nu$ vanishes on $C$ and in the previous embedding  $\tilde z$ coincides  with  the multiplication by  $(\det(\bar z_i))\in L^u$. When we  polaryze from $z\in R(  Y)$ we see by Formula \eqref{TrK} that multiplication by $tr(z):=(tr(\bar z_1),\ldots,tr(\bar z_u))$  maps $K_\nu$ into $K_\nu$  and  by definition  lies in $\mathcal T_A=\mathcal T_A(Y)$ (which now depends on $A$) and in fact by definition $\mathcal T_A$ is generated by these elements. We claim that 
 \begin{lemma}\label{fait}
The action of  $\mathcal T_A$ on $K_\nu$ is faithful so that   the algebra $\mathcal T_A(Y)$ is  the quotient  of  $\mathcal T_t(Y)$ modulo the ideal annihilator of the module $K_\nu$.

This ideal is independent on $\nu$ for $\nu$ large.
\end{lemma}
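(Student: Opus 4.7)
The plan is to combine Theorem \ref{ZiVa} (Ziplies--Vaccarino) to factor the $\mathcal T_t(Y)$--action through $\mathcal T_A$ with a separation argument using the existence of Kemer polynomials that are non--identities on each maximal--Kemer--index summand.

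First, I would spell out how the action factors. For $\nu$ large, $K_\nu$ vanishes on $C$, so via the embedding $\mathcal F_A\hookrightarrow B\otimes L=\oplus_{i=1}^u A_i\otimes L$ the operator $\tilde z$ of \eqref{esc1} acts as multiplication by the tuple $(\det(\bar z_i))_{i=1}^u$. Since $z\mapsto (\det(\bar z_i))_{i=1}^u$ is a multiplicative polynomial map of degree $t$ landing in the commutative ring $L^u$, Theorem \ref{ZiVa} forces it to factor through an algebra homomorphism $\mathcal T_t(Y)\to L^u$. After polarization, the image is generated by the tuples $(\tr(\bar z_i))_{i=1}^u$, which are by definition the generators of $\mathcal T_A$ on the components that act non--trivially on $K_\nu$. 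Consequently the action map fits into a factorization
$$\mathcal T_t(Y)\twoheadrightarrow \mathcal T_A\longrightarrow \End_F(K_\nu).$$

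Next, I would show the second arrow is injective. Suppose $\alpha=(\alpha_1,\dots,\alpha_u)\in \mathcal T_A\subset L^u$ annihilates $K_\nu$. Fix $i\leq u$; by Theorem \ref{primoK} together with Remark \ref{conv}, once $\nu$ is sufficiently large one has $\nu$--Kemer polynomials which are non--identities of $A_i$. Combining the fullness property of Lemma \ref{fulfu} with the freedom to specialize the $Y$--variables block--diagonally, one produces some $f\in K_\nu$ whose $i$--th component $f_i\in A_i\otimes L$ is nonzero. The relation $\alpha\cdot f=0$ then forces $\alpha_i f_i=0$ in the torsion--free $L$--module $A_i\otimes L$, whence $\alpha_i=0$. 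Varying $i$ gives $\alpha=0$.

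For the stabilization, let $I_\nu:=\ker(\mathcal T_t(Y)\to \End_F(K_\nu))$. The previous two steps identify $\mathcal T_t(Y)/I_\nu$ with $\mathcal T_A$ as soon as $\nu$ is large enough that $K_\nu$ both vanishes on $C$ and is non--trivial on each $A_i$, $i\leq u$. For all such $\nu$ the ideal $I_\nu$ therefore coincides with the kernel of the canonical surjection $\mathcal T_t(Y)\twoheadrightarrow\mathcal T_A$, which is intrinsic and independent of $\nu$. The main obstacle I expect is the separation step: producing, for each maximal--Kemer--index summand $A_i$, a Kemer polynomial in $K_\nu$ whose $i$--th block is non--vanishing. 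This blends the genericity assertion that a sufficiently large $\nu$--Kemer polynomial simultaneously serves all $A_j$ ($j\leq u$) with the fullness constraint of Lemma \ref{fulfu} and a careful block--diagonal specialization of the auxiliary variables.
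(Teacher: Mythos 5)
Your proposal is correct and follows essentially the same route as the paper: the action factors through the intrinsic surjection $\mathcal T_t(Y)\twoheadrightarrow\mathcal T_A\subset\oplus_i\mathcal T_{A_i}(Y)$, and faithfulness is obtained componentwise from the fact that, for $\nu$ large, the projection of $K_\nu$ to each summand $A_i\otimes L$ is non--zero and sits inside an $L$--vector space (the paper phrases this as torsion--freeness of $K_{\nu,i}$ over the domain $\mathcal T_{A_i}$), with independence of $\nu$ coming from the kernel being that of the canonical, $\nu$--independent map. The separation step you flag as the main obstacle is exactly the point the paper also relies on, namely that for $\nu$ large a $\nu$--Kemer polynomial can be chosen which is a non--identity of each maximal--index summand $A_i$.
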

\begin{proof}
Now we have constructed $\mathcal T_A$ from Formula \eqref{TAA} as contained in  the direct sum of the algebras  $ \mathcal T_{A_i},\ i=1,\ldots u.$  By definition $\mathcal T_A$ depends only from the semisimple part  of  $A$ so it is the coordinate ring of a variety  which depends only upon the  algebra $\oplus_i M_{n_i}(F)$  that is from the numbers $n_i$.\smallskip

So in the end  the action of  $\mathcal T_t(Y)$, invariants of $t\times t$ matrices  in the variables $Y$, on $K$ factors through  the map 
\begin{equation}\label{maptt}
\pi:T_t(Y)\to \mathcal T_A(Y) \subset \oplus_{i=1}^u \mathcal T_{A_i}(Y).
\end{equation}  We claim that the composition of  $\pi$  with any projection to  a summand $\mathcal T_{A_i}(Y)$ is surjective. In fact the first ring is  generated by the traces of the monomials evaluated in all $t$---dimensional representations while the second is generated by the same traces but only those representations which factor through the left action of $\bar A_i$. We shall see in \S \ref{vase}   the nature of this subvariety.\smallskip

  Let $K_{\nu,i}$ be the image of $K_\nu$ in $\mathcal F_{A_i}\mathcal T_{A_i}(Y).$  We have  that each $K_{\nu,i}$   is torsion free over $\mathcal T_{A_i}$, which is a domain, since $K_{\nu,i}\subset J_i^s\otimes L$ is contained in a vector  space over the field $L$.

  Since  $K_\nu\subset \oplus_{i=1}^uK_{\nu,i}$  and  for each $i$ the restriction to $K_{\nu,i}$ is non--zero, we finally have that
$\mathcal T_{A }$ acts faithfully on $K_\nu$ so that we have defined the homomorphism of Formula \eqref{maptt}. 

\end{proof}

We want now to free ourselves from the auxiliary variables $X$  and evaluate in all possible ways the elements of $K_\nu$ in the relatively free algebra $R(Y)$  of $A$ in the variables $Y$, obtaining thus a $T$ ideal $K_R$ in $R(Y)$.

We see that
\begin{theorem}\label{extac} The module action  of  $T_t(Y)$ on $K_\nu$ induces a unique module action on $K_R$ compatible with substitutions of variables in $Y$.
\smallskip

This action factors through a  faithful   action of its image $\bar{\mathcal T}_A(Y)$. in $\mathcal T_A(Y)$.

\end{theorem}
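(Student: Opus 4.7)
The plan is to extend the $\mathcal T_t(Y)$-action on $K_\nu$ down to $K_R$ along the substitution maps that evaluate the auxiliary $X$-variables into $R(Y)$, and then to check that the resulting action is faithful modulo the annihilator $\mathrm{Ann}(K_\nu)$, which Lemma \ref{fait} already identified with the kernel of $\mathcal T_t(Y)\twoheadrightarrow\mathcal T_A(Y)$.

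First I would set up the action. Every element of $K_R$ is obtained as $\phi(f)$ for some $f\in K_\nu$ and some algebra homomorphism $\phi:R(Y\cup X)\to R(Y)$ fixing $Y$, and one declares
$$
\tau\cdot \phi(f):=\phi(\tau\cdot f),\qquad \tau\in\mathcal T_t(Y),\ f\in K_\nu.
$$
Uniqueness is immediate from the compatibility requirement: any $\mathcal T_t(Y)$-action on $K_R$ that extends the given one on $K_\nu$ and commutes with $Y$-substitutions must satisfy this formula on every evaluation. The hard part is well-definedness: if $\phi_1(f_1)=\phi_2(f_2)$ in $R(Y)$, one needs $\phi_1(\tau\cdot f_1)=\phi_2(\tau\cdot f_2)$. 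Enlarging the auxiliary variable set if necessary, we may take $\phi_1=\phi_2=\phi$, so the statement reduces to showing that $\ker\phi\cap K_\nu$ is stable under $\mathcal T_t(Y)$. The generating operators $\tilde z$ of \eqref{esc1} act by left-multiplying a fixed small layer of $X$ by $z\in F\langle Y\rangle$; since $z$ involves no $X$-variable, this operation commutes with substitution of the $X$-variables and therefore preserves $\ker\phi$. By the Ziplies--Vaccarino Theorem \ref{ZiVa} every element of $\mathcal T_t(Y)$ is a polynomial combination of such operators, so $\phi$-stability propagates to the whole action.

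For the factoring and faithfulness, $\mathrm{Ann}(K_\nu)$ acts as zero on every $\phi(f)$, hence the action descends to $\mathcal T_A(Y)=\mathcal T_t(Y)/\mathrm{Ann}(K_\nu)$. Let $\bar{\mathcal T}_A(Y)$ denote the image of $\mathcal T_A(Y)$ in $\End_F(K_R)$; tautologically it acts faithfully on $K_R$ and is realized as a quotient of $\mathcal T_A(Y)$. The main obstacle is to verify that this quotient sits back inside $\mathcal T_A(Y)$ as asserted, equivalently that the induced action of $\mathcal T_A(Y)$ on $K_R$ is itself faithful. Using the embedding $R(Y)\hookrightarrow A\otimes_F L$ by generic elements, the action of $\mathcal T_A(Y)\subset L^{\oplus u}$ on $K_R$ becomes componentwise scalar multiplication on the image of $K_R$ in $\oplus_i J_i^{s}\otimes_F L$, and each component is torsion-free over the corresponding domain $\mathcal T_{A_i}$, exactly as in the proof of Lemma \ref{fait}. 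It then remains to argue that each of the $u$ components of the image of $K_R$ is nonzero, which I would achieve by specializing $\phi$ to send the $X$-variables to suitably chosen generic elements of $R(Y)$, so that the nonvanishing established on $K_\nu$ in each fundamental summand is inherited by $K_R$; torsion-freeness then delivers faithfulness and the desired identification of $\bar{\mathcal T}_A(Y)$ inside $\mathcal T_A(Y)$.
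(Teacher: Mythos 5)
Your overall architecture (define $\tau\cdot\phi(f):=\phi(\tau\cdot f)$, get uniqueness for free, prove faithfulness by torsion--freeness over the domains $\mathcal T_{A_i}$) is the right one, and your faithfulness discussion is essentially the paper's ("$\mathcal T_A$ acts faithfully on $K_RL$"). But the crux of the existence statement is the well--definedness of the formula, and your argument for it does not work. You reduce to showing that $\ker\phi\cap K_\nu$ is stable under the operators $\tilde z$ of \eqref{esc1}, and justify this by saying that, since $z\in F\langle Y\rangle$ involves no $X$--variable, $\tilde z$ "commutes with substitution of the $X$--variables". What is actually true is only that $\phi\circ\tilde z$ is the \emph{different} substitution $x_i\mapsto z\,\phi(x_i)$ on the chosen small layer: there is no layer structure left in $R(Y)$, hence no operator on $R(Y)$ with which $\tilde z$ could commute. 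From $\phi(f)=0$ one cannot formally conclude that $f$ evaluated at $z\phi(x_1),\ldots,z\phi(x_t)$ vanishes; for a general multilinear $f$ this is simply false. So the stability of $\ker\phi\cap K_\nu$ is not a formal consequence of the shape of $\tilde z$ -- it is essentially equivalent to the well--definedness you are trying to prove, and it needs structural input about Kemer polynomials.

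The missing ingredient is exactly the one you already use later for faithfulness, and it is how the paper argues: inside the (non--intrinsic) embedding $K_R\subset R(Y)\subset A\otimes_FL$, formulas \eqref{esced} and \eqref{TrK} show that for every evaluation $\phi$ one has $\phi(\tilde z f)=\det(\bar z)\,\phi(f)$, resp. that the polarized operators act as multiplication by the trace elements of $\mathcal T_A\subset L^{\oplus u}$, and these scalars depend only on $z$, not on $\phi$. Hence $\phi(\tau\cdot f)$ depends only on $\phi(f)$, which gives well--definedness, and since $K_R$ is a $\mathcal T_A$--submodule of $A\otimes_FL$ (Corollary \ref{kapo0} extended to $K_R$) the action exists and is compatible with $Y$--substitutions; uniqueness then follows as you say, and faithfulness of the image follows from Lemma \ref{fait} together with torsion--freeness of each component $K_{R,i}\subset J_i^{s}\otimes_FL$ over the domain $\mathcal T_{A_i}$. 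In short: move the embedding argument from the end of your proof to the well--definedness step, replacing the incorrect commutation claim; no purely formal argument inside $R(Y\cup X)$ can do this, because the "scalar" $\det(\bar z)$ lives in $\mathcal T_A$, not in the relatively free algebra.
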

\begin{proof}
If we work inside the  non intrinsic algebra  generated by $R(Y)$ and $\mathcal T_{A }$ we have that $K_R$ is a $\mathcal T_{A }$ submodule and this module  structure is by definition compatible with substitutions of variables in $Y$.

On the other hand since the   elements of $K_R$ can be obtained by specializing the elements of $K$    there is a unique way in which the  module action  of  $T_t(Y)$ on $K$ can induce a module action on $K_R$ compatible with substitutions of variables in $Y$.  It is a  faithful $\bar{\mathcal T}_A(Y)$ action  since $\mathcal T_{A }$ acts faithfully  on  $K_R L$.
\end{proof}
Notice that $K_R\subset B\otimes L$ so all the computations are just for the algebra $B=\oplus_i A_i$ and from now on we shall just assume $A=B$ and $C=0$.
\subsection{Representation varieties}{\em Our next task is to describe the algebraic varieties of which the rings  $\mathcal T_{A }$ are coordinate rings.}
\subsubsection{The varieties $W_{n_1,\ldots,n_u}$\label{vase}}
For a given $t$ and $m$ consider the space $M_t(F)^m$ of $m$--tuples of $t\times t$ matrices. We think of this as the set of $t$--dimensional representations of the free algebra  $F \langle  X \rangle  $ in $m$ variables  $x_1,\ldots,x_m$.

On this space acts by simultaneous conjugation the projective linear group $PGL(t,F)$ so that its orbits are the isomorphism classes of such representations. It is well known, \cite{ArM},  that the closed orbits correspond to semi--simple representations, so by geometric invariant theory the quotient variety $V_t(m):=M_t(F)^m//PGL(t)$ parametrizes  equivalence classes of semisimple representations of dimension $t$. As soon as $m\geq 2$, its generic points correspond to irreducible representations, which give closed free orbits,  so the variety has dimension $(m-1)t^2+1$.

The coordinate ring of this variety is the ring of $PGL(t,F)$ invariants, which we shall denote by  $\mathcal T_{t,m}$ or as before $\mathcal T_{t }(Y)$, if we denote by $Y$ the $m$ matrix variables.

In characteristic 0 the algebra $\mathcal T_{t }(Y)$  is generated by the traces of the monomials in the matrix variables, while in all characteristic by work of Donkin \cite{Don}, we need all coefficients of characteristic polynomials of monomials, which can be taken to be primitive.

Given non  negative integers $h_i,n_i$ with $\sum_ih_in_i=t$  we may consider, inside the variety $M_t(F)^m//PGL(t)$,  the subvariety $W_{h_1,\ldots,h_u;n_1,\ldots,n_u}$, of semisimple representations which can be obtained as direct sum   $\oplus_ih_iN_i$, from  semisimple representations $N_i$ of dimension $n_i$ for each $i=1,\ldots,u$. Of course generically each  $N_i$  is irreducible.

It will be interesting for us the special case $h_i=n_i$ which we denote by $W_{n_1,\ldots,n_u}$.\medskip

$W_{h_1,\ldots,h_u;n_1,\ldots,n_u}$  is the natural image of the product $\prod_{i=1}^uV_{n_i}(m)$, where $V_{n_i}(m)$ is  the variety of  semisimple representations of $m$--tuples of  $n_i\times n_i$ matrices, under the map  $j:N_1,\ldots, N_u\mapsto \oplus_ih_iN_i$.

We see  in fact that this map $j$ can be considered as a {\em restriction}.

Inside  the algebra  of $t\times t$ matrices, consider the subalgebra of block diagonal matrices   $\oplus_{i=1}^u h_iM_{n_i}(F)$, where the block $M_{n_i}(F)$  appears embedded into $h_i$ equal blocks, which is isomorphic of course by definition to  $M_{n_1,\ldots,n_u}:=\oplus_{i=1}^u  M_{n_i}(F)$ we call $j_{h_1,\ldots,j_n}$ this inclusion isomorphism.  

When we restrict  the invariants $\mathcal T_t(Y)$  to this subalgebra we see that when $z$ is some polynomial in $Y$ the restriction of the function $tr(z)$ to this subalgebra equals   $\sum_{i=1}^u h_i tr_i(z)$  where $tr_i(z)$ is in the algebra $\mathcal T_{n_i}(Y)$.

This means that $\mathcal T_t(Y)$  maps to  the $G=\prod_{i=1}^u  PGL(n_i,F)$  invariants, that is 
the coordinate ring of  $\prod_{i=1}^uV_{n_i}(m)$ which is $\mathcal T_{n_1}(Y)\otimes\ldots\otimes \mathcal T_{n_u}(Y)$.

We have shown that, as  $\prod_{i=1}^uV_{n_i}(m)$ is the quotient of  $m$ copies of the space $\oplus_{i=1}^u  M_{n_i}(F)$  under the group $G=\prod_{i=1}^u  PGL(n_i,F)$ acting by conjugation, we have the commutative diagram, where the two maps $\pi_G, \pi_{PGL(t,F)} $ are quotients under the two groups   
\begin{equation}\label{cdWn}
\begin{CD}
(\oplus_{i=1}^u  M_{n_i}(F))^m@>j_{h_1,\ldots,j_n}>>M_t(F)^m\\
@V\pi_G VV @V\pi_{PGL(t,F)} VV\\
\prod_{i=1}^uV_{n_i}(m)@>j>>W_{h_1,\ldots,h_u;n_1,\ldots,n_u}
\end{CD}
\end{equation}
Every representation of the form $ \oplus_ih_iN_i$ with $N_i$ of dimension $n_i$ can be conjugated  into $\oplus_{i=1}^u h_iM_{n_i}(F)$, but usually this can be done in several different ways,  thus we have
\begin{remark}\label{samedi}
The map $j:\prod_{i=1}^uV_{n_i}(m)\to W_{n_1,\ldots,n_u}$ is surjective but it is almost never an isomorphism. It is isomorphism only when $u=1$.

We claim that the two varieties have the same dimension. For this it is enough to show that   the generic fiber is finite.

The generic fiber is obtained when   in $\prod_{i=1}^uV_{n_i}(m)$ and in $ W_{n_1,\ldots,n_u}$ we have that all the summands $N_i$ are irreducible and not just semisimple.

In this case if  we have several indices $i$ with the same $n_i$ then in the expression $\oplus_i n_i N_i$  we may permute the indices of the irreducible representations of the same dimension so they come from different ways of arranging them in the factors  $V_{n_i}(m)$.

Thus if we have certain multiplicities $h_1,\ldots,h_k$ of the different indices $n_i$ we see that the generic fiber is formed by $\prod_ih_i!$  points.

In fact even if  there are no multiplicities, so the map $j$ is birational, the same argument may show that in non generic fibers we may perform some of these permutations and so  the map is not usually bijective.
\end{remark}
\begin{example}
$t=2=1+1$,  the variety $V_1(m)$  is just affine space with coordinate ring the polynomial ring $F[x_1,\ldots,x_m]$ so $V_1(m)\times  V_1(m)$ is $2m$ dimensional affine space with coordinate ring the polynomial ring $F[x_1,\ldots,x_m,y_1,\ldots,y_m]$.

For the other ring we take     monomials in the elements $(x_i, y_i)$ which should be thought of as diagonal $2\times 2$ matrices.   Such a monomial is the pair formed by   a monomial in the $x_i$ and the same in the $y_i$. Its trace is the sum over     these two monomials, a symmetric function in the exchange $\tau$ between $x_i, y_i$.  

The map is generically 2 to 1, the image of coordinate rings is the ring of $\tau$ invariants.
\end{example}
In fact we have an even stronger statement, let $A_i$ denote the coordinate ring of  the variety $V_{n_i}(m)$  so $A_1\otimes A_2\otimes\ldots\otimes A_u$ is  the coordinate ring of  the variety $\prod_{i=1}^u V_{n_i}(m)$.
We claim that  
\begin{lemma}\label{intre}
$A_1\otimes A_2\otimes\ldots\otimes A_u$ is integral over $\mathcal T_A(m)$, spanned by monomials of degree bounded by some number independent of $m$.

\end{lemma}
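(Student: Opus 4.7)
\emph{Proof plan.} The plan is threefold: introduce a finite symmetry group $H$ making $j$ a Galois-type quotient, show module-finiteness of $A_1\otimes\cdots\otimes A_u$ over its $H$-invariants and then over $\mathcal T_A(m)$ via a normalization argument, and finally control the degrees of module generators by the Procesi--Razmyslov theorem.

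Let $H:=\{\sigma\in S_u : n_{\sigma(i)}=n_i\ \forall i\}$ be the stabilizer of the sequence of block sizes in $S_u$. It acts on $\prod_i V_{n_i}(m)$ by permuting factors (using the canonical identifications $V_{n_{\sigma(i)}}(m)\cong V_{n_i}(m)$) and on $A:=A_1\otimes\cdots\otimes A_u$ accordingly. For $\sigma\in H$ the representations $\bigoplus_i n_iN_i$ and $\bigoplus_i n_iN_{\sigma(i)}$ are isomorphic, so $j$ is $H$-invariant; this forces $\mathcal T_A(m)\subseteq A^H$ inside $A$. By the analysis in Remark \ref{samedi} the generic fibre of $j$ is exactly an $H$-orbit, so the induced morphism $\phi:\mathrm{Spec}(A^H)=\prod_i V_{n_i}(m)/H\to W_{n_1,\ldots,n_u}=\mathrm{Spec}(\mathcal T_A(m))$ is birational.

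For each $a\in A$ the polynomial $\prod_{\sigma\in H}(T-\sigma(a))$ is monic of degree $|H|\le u!$, has coefficients in $A^H$, and annihilates $a$; hence $A$ is module-finite over $A^H$, spanned by monomials of length at most $|H|-1$ in any algebra generating set of $A$. To pass from $A^H$ down to $\mathcal T_A(m)$, note that each $V_{n_i}(m)$ is a GIT quotient in characteristic zero of a smooth affine variety by a reductive group and is therefore normal; products of normal varieties and finite-group quotients of normal varieties remain normal in characteristic zero, so $A^H$ is normal. Since $\phi$ is birational, $A^H$ and $\mathcal T_A(m)$ share a common fraction field, so $A^H$ is contained in the normalization of $\mathcal T_A(m)$ in that field. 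The normalization of a finitely generated $F$-algebra is module-finite over it in characteristic zero, so $A^H$ is module-finite over $\mathcal T_A(m)$; combined with the previous step, $A$ is module-finite and hence integral over $\mathcal T_A(m)$.

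For the degree bound I invoke the Procesi--Razmyslov theorem: in characteristic zero each $A_i=\mathcal T_{n_i}(Y)$ is generated as an $F$-algebra by the traces $\mathrm{tr}_i(M)$ of monomials $M$ in the generic matrices of degree at most $n_i^2\le t^2$, a bound depending only on $t$ and not on $m$. Hence $A$ is generated as an $F$-algebra by elements of degree at most $t^2$ in the original matrix variables, and the integral equations produced in the previous paragraph combine these into a $\mathcal T_A(m)$-module generating set whose members have degree bounded by a fixed function of $t$ and $u$, independent of $m$.

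The main obstacle is the descent from $A^H$ to $\mathcal T_A(m)$: the normalization argument bundles several standard but non-trivial facts (normality of the representation varieties $V_{n_i}(m)$, preservation of normality under products and finite quotients in characteristic zero, and module-finiteness of the normalization of a finitely generated algebra). A more hands-on alternative, which could yield the sharper equality $A^H=\mathcal T_A(m)$ and an explicit degree bound, would be to express each $H$-symmetric power sum $\sum_{j\in[i]_H}\mathrm{tr}_j(M)^k$ as a polynomial in $t\times t$ traces via Newton's identities, using Formanek--Razmyslov central polynomials to isolate blocks of each fixed dimension from the rest.
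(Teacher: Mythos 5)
Your reduction to the finite group $H$ and the module--finiteness of $A:=A_1\otimes\cdots\otimes A_u$ over $A^H$ are sound in substance (although the claim that $A$ is spanned over $A^H$ by monomials of length at most $|H|-1$ in \emph{any} generating set does not follow from the equations $\prod_{\sigma\in H}(T-\sigma a)=0$ alone: you also need the characteristic--zero polarization identity $k!\,g_1\cdots g_k=\sum_{\emptyset\neq S}(-1)^{k-|S|}\bigl(\sum_{i\in S}g_i\bigr)^k$ to reduce mixed monomials to powers; integrality of each generator by itself bounds nothing). The genuine gap is the descent from $A^H$ to $\mathcal T_A(m)$. From the normality of $A^H$ and the equality of fraction fields you may only conclude that the normalization of $\mathcal T_A(m)$ is \emph{contained in} $A^H$, not the reverse: the sentence ``$A^H$ is contained in the normalization of $\mathcal T_A(m)$'' is verbatim the assertion that every element of $A^H$ is integral over $\mathcal T_A(m)$, i.e.\ the lemma you are proving, so the step is circular. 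Birationality plus finite (even $H$-orbit) fibres does not force finiteness: compare the open immersion $\mathbb A^1\setminus\{0\}\hookrightarrow\mathbb A^1$, or $F[x]\subset F[x,x^{-1}]$, which is birational, normal, finitely generated and not integral. Some positive information about the inclusion $\mathcal T_A(m)\subset A$ is indispensable and is exactly what your argument never uses.

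The paper supplies that information by an elementary symmetric--function computation, quite different from your route: each $A_i$ is generated by the traces $tr(M_i)$ of monomials $M$ acting on the $i$-th summand, and such a trace is a sum of a subset of fixed cardinality $h$ (depending only on $n_i$) of the $d$ eigenvalues of $M$ acting block--diagonally on $\bar A$. Hence $tr(M_i)$ is a root of the universal monic polynomial $\prod_{|S|=h}\bigl(T-\sum_{j\in S}x_j\bigr)$ of degree $\binom{d}{h}$, whose coefficients are symmetric in all the eigenvalues and therefore polynomials in the power sums $tr(M^k)\in\mathcal T_A(m)$, $k\le d$. This yields integrality of every algebra generator by a monic equation whose degree depends only on $t$ and the $n_i$, never on $m$ --- which is precisely where the ``bounded independently of $m$'' clause comes from; note that even a repaired finiteness-of-normalization argument in your scheme would only produce module generators in degrees that a priori depend on $m$. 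If you wish to keep the geometric framing (the group $H$, Remark \ref{samedi}, birationality of $\phi$), you must first prove that $j$, equivalently $\phi$, is a finite morphism, and the eigenvalue computation above is the natural way to do so; your closing suggestion via Newton's identities for the $H$-symmetrized power sums points in the right direction but is not carried out.
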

\begin{proof} Consider a polynomial $$f(t)=t^m-a_1t^{m-1}+a_2t^{m-2}+(-1)^ma_m$$  with roots $x_1,\ldots,x_m$ so that the elements $a_i$ are the elementary symmetric functions in the $x_i$, we may even take the $x_i$ as indeterminates.

Given an integer $k\in\mathbb N$ and a set  $S\subset \{1,\ldots,m\}$ with $h$ elements set
$$X_S^k:=\sum_{i\in S}x_i^k$$
Consider next the polynomial of degree $N:=\binom{m}{h}$
$$t^{N}+\sum_{i=1}^N(-1)^ib_i t^{N-i}:=\prod_{S\subset \{1,\ldots,m\},\ |S|=h}(t-X_S^k).$$  The coefficients $b_i$ of this polynomial are clearly symmetric functions in the variables $x_i$  so they are expressible  as polynomials in the elements $a_i$, in fact $b_i$ is a polynomial of degree  $ki$ in the variables $x_i$ so it is a polynomial of this weight  when we give to $a_i$ the weight $i$.

Each $A_i$ is generated by the traces $tr(M_i)$ of the monomials $M$ acting on the $i^{th}$ summand.  Thus  the element $tr(M_i)$  is a sum of  $n_i$ eigenvalues out of the $d$ eigenvalues of the monomial $M$  we deduce a universal polynomial of degree $\binom{d}{n_i}$ satisfied by $tr(M_i)$   with coefficients polynomials in the elements  $tr(M^k),\ k=1,\ldots,d$.

 \end{proof}

\subsubsection{The support of Kemer polynomials}
We now apply the previous discussion to Kemer polynomials, first  let us take a fundamental algebra $D$   with  semisimple part $\oplus_{i=1}^q M_{n_i }(F),\ t=\sum_i n_i^2$.  We have constructed the map from $\mathcal T_t(Y)$ to $\mathcal T_D$.
\begin{lemma}\label{coorre}
The algebra $\mathcal T_D$ is the coordinate ring of the irreducible subvariety  $W_{n_1,\ldots,n_u}$ of  $M_t(F)^m//PGL(t)$.\end{lemma}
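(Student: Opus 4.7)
The plan is to identify the trace functional defining $\mathcal T_D$ with the restriction, via the left regular representation of $\bar D$, of the standard matrix trace on $M_t(F)$, and then invoke the commutative diagram \eqref{cdWn} to match $\mathcal T_D$ with the coordinate ring of $W_{n_1,\ldots,n_u}$.

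First I would decompose $\bar D = \oplus_{i=1}^u M_{n_i}(F)$ and observe that $t = \sum_i n_i^2 = \dim_F \bar D$, so that the left regular representation $\rho\colon \bar D \to \End_F(\bar D) \cong M_t(F)$ decomposes as $\oplus_i n_i V_i$, with $V_i$ the standard $n_i$-dimensional module of $M_{n_i}(F)$. This realizes $\rho$ as conjugate to the block--diagonal embedding $j_{n_1,\ldots,n_u}\colon \oplus_i n_i M_{n_i}(F) \hookrightarrow M_t(F)$ appearing in \eqref{cdWn} with $h_i=n_i$, and yields the block--additivity formula
\[
\tr_{M_t(F)}(\rho(\bar a)) \;=\; \sum_{i=1}^u n_i \tr(\bar a_i), \qquad \bar a=(\bar a_i)\in\bar D.
\]

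Next I would apply this to generic elements defining $\mathcal F_D$: each generator $t_D(\mu)\in\mathcal T_D\subset L$ (from \eqref{taf}--\eqref{TAA}) becomes $\sum_i n_i\tr(\bar\mu_i)$, where $\bar\mu_i$ is the corresponding monomial in generic $n_i\times n_i$ matrices over $L$. Hence $\mathcal T_D$ is the subalgebra of $\bigotimes_i \mathcal T_{n_i}(Y)$ generated by these sums. By the block--additivity formula this subalgebra is exactly the image of the restriction map $\mathcal T_t(Y)\to \bigotimes_i \mathcal T_{n_i}(Y)$ displayed in \eqref{cdWn}, i.e.\ the coordinate ring of the closed image $W_{n_1,\ldots,n_u}$ of the map $j\colon \prod_i V_{n_i}(m)\to V_t(m)$.

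For irreducibility: $M_{n_i}(F)^m$ is an irreducible affine space, the GIT quotient $V_{n_i}(m)$ by the connected group $PGL(n_i,F)$ remains irreducible, the product $\prod_i V_{n_i}(m)$ is irreducible, and $W_{n_1,\ldots,n_u}$ inherits irreducibility as its closed image under $j$. The main obstacle I anticipate is the careful bookkeeping of the multiplicities: the integer $n_i$ in $\sum_i n_i\tr(\bar a_i)$ arises from the $n_i$-fold occurrence of the standard module $V_i$ in the regular representation of $\bar D$, and it is essential that this same integer appears as the block multiplicity $h_i=n_i$ in \eqref{cdWn}. Once this alignment is in place the rest of the argument is purely formal.
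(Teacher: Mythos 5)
Your proof is correct and follows essentially the same route as the paper: the paper's own argument is exactly the observation that $\mathcal T_D$ is generated by traces of left multiplication on $\bar D$, which, since the regular representation of $M_{n_i}(F)$ is $n_i$ copies of the standard module, identifies these generators with the restrictions $\sum_i n_i\,tr_i(z)$ of matrix traces to the block-diagonal subalgebra with $h_i=n_i$, so the claim follows from the discussion summarized in diagram \eqref{cdWn}. You have merely spelled out the multiplicity bookkeeping and the irreducibility of the image, which the paper leaves implicit.
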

\begin{proof}
By definition $\mathcal T_D$ is the algebra generated by the traces of $m$--tuples of elements of   $\oplus_{i=1}^q M_{n_i }(F),\ t=\sum_i n_i^2$ acting on itself by left multiplication so the Lemma follows from the previous discussion as summarized by Formula \eqref{cdWn}. 
\end{proof} 

 Let $A=\oplus_{i=1}^u A_i$ be a direct sum of fundamental algebras all with the same Kemer index $(t,s)$.
To each $A_i$  we have associated the irreducible subvariety of $M_t(F)^m//PGL(t,F)$:
$$W_i:=W_{n_1,\ldots,n_{q_i}},\quad \bar A_i=\oplus_{j=1}^{q_i}M_{n_j}(F),\ \sum_{j=1}^{q_i} n_j^2=t.$$
We have thus the rather interesting fact. \begin{theorem}\label{intrisK}
The image of the algebra $\mathcal T_t(Y)$ acting on the space of $\nu$--Kemer polynomials, for large $\nu$, $K_R$ is the coordinate ring of  a, possibly reducible, subvariety of  $M_t(F)^m//PGL(t)$ union of the subvarieties  $W_i,\ i=1,\ldots,u$.

\end{theorem}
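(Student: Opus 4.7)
The plan is to combine the algebraic facts established by Lemma \ref{fait}, Theorem \ref{extac}, and Lemma \ref{coorre} with a geometric reinterpretation of the homomorphism $\pi\colon \mathcal T_t(Y)\to \bigoplus_{i=1}^u \mathcal T_{A_i}(Y)$ of Lemma \ref{fait}.

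First, I would use Lemma \ref{coorre} to identify each factor $\mathcal T_{A_i}(Y)$ with the coordinate ring $F[W_i]$ of the irreducible subvariety $W_i\subset V_t$, where $V_t:=M_t(F)^m//PGL(t,F)$. Correspondingly, $\bigoplus_{i=1}^u \mathcal T_{A_i}(Y)$ is the coordinate ring of the disjoint union $\bigsqcup_i W_i$, and the natural morphism of varieties $\bigsqcup_i W_i\to V_t$ (given by the inclusions $W_i\hookrightarrow V_t$) induces by pullback a ring map $\rho\colon F[V_t]\to \bigoplus_{i=1}^u F[W_i]$.

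Second, I would identify $\pi$ with $\rho$. For $z\in F\langle Y\rangle$, the trace $tr(z)\in \mathcal T_t(Y)$ is sent by $\pi$ to the tuple $(tr_1(z),\ldots,tr_u(z))$, where $tr_i(z)$ is the trace of left multiplication by $z$ on $\bar A_i$; since, by the commutative diagram \eqref{cdWn}, a generic point of $W_i$ is exactly the image in $V_t$ of an $m$-tuple of elements of $\bar A_i$ acting on itself by left multiplication, $tr_i(z)$ is precisely the restriction of $tr(z)\in F[V_t]$ to $W_i$. Because $\mathcal T_t(Y)$ is generated by such traces (in characteristic zero), the two homomorphisms $\pi$ and $\rho$ coincide. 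The image of $\rho$ is then the coordinate ring of the scheme-theoretic image of $\bigsqcup_i W_i\to V_t$, which is the reduced union $W:=\bigcup_{i=1}^u W_i\subset V_t$: explicitly, $\ker\rho=\bigcap_{i=1}^u I(W_i)=I(W)$, so $\operatorname{im}\rho=F[V_t]/I(W)=F[W]$. Hence $\mathcal T_A(Y)=F[W]$.

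Finally, Theorem \ref{extac} passes from $K_\nu$ to $K_R$, asserting that $\mathcal T_t(Y)$ acts on $K_R$ through a faithful action of its image $\bar{\mathcal T}_A(Y)$. The main obstacle I expect is precisely the verification that $\bar{\mathcal T}_A(Y)=\mathcal T_A(Y)$, i.e.\ that no further relations arise when specialising to $K_R$. I would handle this by decomposing $K_R=\bigoplus_{i=1}^u K_{R,i}$ with $K_{R,i}\subset A_i\otimes L$ non-zero and torsion-free over the domain $\mathcal T_{A_i}$ (as in the proof of Lemma \ref{fait}); the same argument used there, that each projection $\mathcal T_A(Y)\to \mathcal T_{A_i}(Y)$ is surjective and each $K_{R,i}$ is a torsion-free module over $\mathcal T_{A_i}$, forces the action of $\mathcal T_A(Y)\subset \bigoplus_i \mathcal T_{A_i}(Y)$ on $K_R$ to be faithful. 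Combining, the image of $\mathcal T_t(Y)$ acting on $K_R$ is $F[W]$, the coordinate ring of the (possibly reducible) subvariety $W=\bigcup_{i=1}^u W_i\subset V_t$, as claimed.
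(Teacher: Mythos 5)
Your proposal is correct and follows essentially the same route as the paper, which obtains Theorem \ref{intrisK} by assembling exactly the ingredients you cite: Lemma \ref{coorre} (identifying each $\mathcal T_{A_i}(Y)$ with $F[W_i]$ via the diagram \eqref{cdWn} and the trace generators), Lemma \ref{fait} (surjectivity onto each factor and faithfulness via torsion-freeness over the domains $\mathcal T_{A_i}$), and Theorem \ref{extac} (passage from $K_\nu$ to $K_R$), so that the image is $F[V_t]/\bigcap_i I(W_i)=F[\bigcup_i W_i]$. The only slip is notational: $K_R$ embeds into $\bigoplus_i K_{R,i}$ with $K_{R,i}$ the projections rather than being equal to that direct sum, but your argument, like the paper's, only uses that each projection is non-zero and torsion-free, so nothing is affected.
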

In other words the $\mathcal T_t(Y)$--module $K_R$ of $\nu$--Kemer polynomials, for large $\nu$   is supported on this  subvariety.

Notice that $\mathcal T_t(Y)$ is an object intrinsecally defined and then also $\bigcup_iW_i$ is intrinsic being the support of $K_R$,  the subvarieties  $W_i=W_{n_1,\ldots,n_{q_i}}$   reflect the structure of the semisimple parts  of the fundamental algebras $A_i$ which may appear as summands of maximal Kemer index  of an algebra $A$ having as identities the given $T$--ideal.

There are some subtle points in this construction, first of all by $K_R$ we mean the $T$--ideal generated by $K_\nu$ for $\nu$ large, in the sense that this variety stabilizes for $\nu$ large. It is possible that some  variety $W_i$ is contained in another $W_j$, this gives an {\em embedded component} which may not be visible just by the structure of the module $K_R$ but depends on the embedding $K_R\subset \oplus_i K_i$ which in turn depends on the particular choice of $A$  so that    the ideal  $\Gamma=\mathrm{Id}_A=\cap_{i=1}^u \mathrm{Id}_{A_i}$. This appears as some {\em primary decomposition} and it is worth of further investigation.

A specific element of $K_R$ vanishes on one of the varieties $W_i$ if and only if it is  a polynomial identity  for the corresponding summand $A_i$.\medskip

\begin{corollary}\label{intrisK1} If $R$  is the relatively free algebra associated to a fundamental algebra $A$ with semisimple part $\bar A=\oplus_{j=1}^{q }M_{n_j}(F)$, then the module of Kemer polynomial is supported on the irreducible variety 
$ W_{n_1,\ldots,n_{q }}$. 

In particular if two  fundamental algebras are PI equivalent then they have the same   semisimple part.
\end{corollary}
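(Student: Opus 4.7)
The first statement is essentially a direct specialization of Theorem~\ref{intrisK} to the case where $A$ itself is fundamental. My plan is to observe that the decomposition $A=\oplus_{i=1}^u A_i$ into fundamental summands appearing in that theorem has $u=1$, so the union $\bigcup_i W_i$ collapses to the single irreducible subvariety $W_{n_1,\ldots,n_q}$ attached to $\bar A=\oplus_{j=1}^q M_{n_j}(F)$, and this is the support of the $\mathcal T_t(Y)$-module $K_R$.

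For the ``in particular'' assertion, let $A$ and $B$ be PI-equivalent fundamental algebras with semisimple parts $\bar A=\oplus_{j=1}^q M_{n_j}(F)$ and $\bar B=\oplus_{j=1}^{q'} M_{n_j'}(F)$. First I will invoke Theorem~\ref{primoK} to conclude that both algebras have the same Kemer index, which further equals their respective $(t,s)$-indices; in particular $t:=\sum_j n_j^2=\sum_j (n_j')^2=\dim_F\bar A=\dim_F\bar B$, so the varieties $W_{n_1,\ldots,n_q}$ and $W_{n_1',\ldots,n_{q'}'}$ both live inside the common ambient variety $M_t(F)^m/\!/PGL(t,F)$.

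Next I observe that the relatively free algebra $R$ and its ideal $K_R$ of $\nu$-Kemer polynomials (for $\nu$ large) depend only on the $T$-ideal $\mathrm{Id}(A)=\mathrm{Id}(B)$; the support of $K_R$ as a $\mathcal T_t(Y)$-module is therefore an invariant of the PI-equivalence class. Applying the first part once via $A$ and once via $B$ identifies this support with $W_{n_1,\ldots,n_q}$ and with $W_{n_1',\ldots,n_{q'}'}$, so these two irreducible subvarieties coincide.

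The step I expect to require the most care is extracting the multiset $\{n_1,\ldots,n_q\}$ from the equality $W_{n_1,\ldots,n_q}=W_{n_1',\ldots,n_{q'}'}$. My plan is to examine a generic point of the common subvariety: by the construction in \S\ref{vase}, a generic point of $W_{n_1,\ldots,n_q}$ corresponds to a semisimple representation $\oplus_{j=1}^q n_j N_j$ with each $N_j$ irreducible of dimension $n_j$, so its multiset of (dimension, multiplicity) pairs of irreducible constituents is $\{(n_j,n_j)\}_{j=1}^q$. Since this multiset is an intrinsic invariant of the underlying point of $M_t(F)^m/\!/PGL(t,F)$ (it is just the decomposition type of the corresponding semisimple representation), the equality of the two subvarieties forces $\{(n_j,n_j)\}_{j=1}^q=\{(n_j',n_j')\}_{j=1}^{q'}$, and hence $\bar A\cong\bar B$.
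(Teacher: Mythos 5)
Your proposal is correct and follows the same route the paper intends: the corollary is read off from Theorem~\ref{intrisK} with $u=1$, combined with the observation (made right after that theorem) that $\mathcal T_t(Y)$ and the support of $K_R$ are intrinsic to the $T$--ideal, so PI--equivalence forces the two irreducible varieties to coincide. Your explicit generic-point argument --- recovering the multiset $\{n_j\}$ from the decomposition type $\oplus_j n_j N_j$ of a generic semisimple representation in $W_{n_1,\ldots,n_q}$, together with the preliminary use of Theorem~\ref{primoK} to equate the ambient $t$ --- just makes precise a step the paper leaves implicit, and is a sound way to do it.
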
 
In general if we have two equivalent PI algebras $A=\oplus_i A_i,\ B=\oplus_j B_j$  each a direct sum of fundamental algebras, we see that the semisimple components which give maximal varieties of representations are uniquely determined.

This answers at least partially the question  on how intrinsic are the constructions associated to a particular choice of an algebra $A$  having a chosen $T$--ideal as ideal of polynomial identities. 

  \subsubsection{Dimension}
  For an associative algebra $R$ with 1,  over a field $F$, Gel'fand--Kirillov \cite{GeKi} have defined a dimension as follows.   Let $V\subset R$ be a finite dimensional subspace with $1\in V$. Let $V_n$ denote the span of all products  of $n$ elements of $V$ and set $ d_V(n):=\dim V_n.$
 \begin{definition}\label{GKD}[Gel'fand--Kirillov--Dimension (GK--Dimension)] $$Dim\,R:=\sup_V\limsup_{n\to \infty}\log d_V(n)/\log n.$$
  
  If $R$ is generated by $V$ then   $$Dim\,R = \limsup_{n\to \infty}\log d_V(n)/\log n.$$
  \end{definition}  In general  a finitely generated non--commutative algebra may have infinite dimension or a dimension which is not an integer, \cite{bokra}.
  
 A special case is when $R$  is graded and its Hilbert series $H_R(t):=\sum_{k=0}^\infty \dim_FR_kt^k$ is a rational function of type  of Formula \eqref{Hser} (this is a rather strong constraint on $R$).
  
 Since 
$$\frac{1}{1-t^h}=\sum_{i=0}^\infty t^{ih}$$  one has 
$$\frac{\sum_{j=0}^ra_jt^j}{\prod_{j=1}^N (1-t^{h_j})}=\sum_{j=0}^ra_jt^j\prod_{j=1}^N (\sum_{i=0}^\infty t^{ih_j}).$$
The function $\prod_{j=1}^N (1-t^{h_j})^{-1}$  is the Hilbert series of the polynomial algebra in generators $x_1,\ldots,x_N$ with $x_i$ of degree $h_i$. 
Write $\prod_{j=1}^N (\sum_{i=0}^\infty t^{ih_j})=\sum_{k=0}^\infty c_k t^k$  we see that $c_k$ is a non negative integer which counts in how many ways the integer $k$ can be written in the form  $k=\sum_{j=1}^Ni_jh_j,\ i_j\in\mathbb N$.

Such a function $c_k$ is classically known as a {\em partition function}, it coincides on the positive integers with  a {\em quasi--polynomial} of degree $N-1$. Quasi--polynomial  means in this case that, when we restrict $c_k$ on each coset  of $\mathbb Z$ modulo the least common multiple of the $h_j$, on the positive integers in this coset this function coincides with a polynomial of degree $N-1$. There is an extensive literature on such functions  (cf. \cite{D6}).

If we develop the rational function of Formula \eqref{Hser} in power series $\sum_{k=0}^\infty d_k t^k$ we still have that {\em after a finite number of steps}  the  function $d_k$ coincides with   a   quasi--polynomial $D(k)$, but its degree  may be strictly lower than $N-1$.  One has 

\begin{theorem}\label{dimH} The dimension of $R$ is the order of the pole of $H_R(t)$ at $t=1$ and equals $n+1$ where $n$ is the degree of the quasi polynomial $D(k)$. 
\end{theorem}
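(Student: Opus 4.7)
\emph{The plan} is to analyse the Taylor coefficients $d_k := \dim R_k$ of the rational Hilbert series
$$H_R(t)=\frac{p(t)}{\prod_{j=1}^N (1-t^{h_j})}$$
via partial fractions over $\mathbb{C}$, and then convert the resulting asymptotic information into the growth of $d_V(n)$ for the natural generating subspace $V:=F\cdot 1+\mathrm{span}_F(X)\subset R$, where $X$ is the finite set of degree--one generators of $R$. Since the poles of $H_R$ all lie at roots of unity, the partial fraction decomposition yields, for every $k>\deg p$,
$$d_k=\sum_\zeta \zeta^{-k} P_\zeta(k),$$
where $\zeta$ runs over the poles on the unit circle, $\mu_\zeta$ denotes the order of the pole at $\zeta$, and $P_\zeta$ is a polynomial of degree $\mu_\zeta-1$. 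Restricted to a congruence class of $k$ modulo $L:=\mathrm{lcm}(h_j)$ each factor $\zeta^{-k}$ is constant, so $d_k$ agrees on any such class with a polynomial of degree at most $\max_\zeta(\mu_\zeta-1)$; this exhibits $d_k$ for large $k$ as a quasi--polynomial $D(k)$.

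\emph{The central step} is to show that the pole at $t=1$ dominates, that is $\mu_\zeta\leq \mu_1$ for every $\zeta$ on the unit circle. Since $d_k\geq 0$, the inequality
$$|H_R(r\zeta)|\leq \sum_k d_k r^k=H_R(r)\qquad (r\in[0,1))$$
holds, and comparing $H_R(r)\sim c_1(1-r)^{-\mu_1}$ as $r\to 1^-$ with $|H_R(r\zeta)|\sim |c_\zeta|(1-r)^{-\mu_\zeta}$ forces $\mu_\zeta\leq \mu_1$. Moreover $c_1>0$, since $H_R(r)\to +\infty$ through positive values, so $P_1(k)$ is a real polynomial of degree exactly $\mu_1-1$ with positive leading coefficient. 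Linear independence of characters of $\mathbb{Z}/L\mathbb{Z}$ then rules out a total cancellation of the top--degree terms of the $P_\zeta$ on every coset mod $L$, so some coset carries a polynomial of degree exactly $\mu_1-1$; hence $\deg D=\mu_1-1$, which is the equality $n+1=\mu_1$ claimed in the theorem.

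\emph{Identification with GK--dimension.} Since the generators of $R$ lie in degree $1$, one has $V^n=\bigoplus_{k=0}^n R_k$, so $d_V(n)=\sum_{k=0}^n d_k$. Abel summation gives $\sum_{k\leq n}\zeta^{-k}Q(k)=O(n^{\deg Q})$ for $\zeta\neq 1$, so the oscillating contributions to $d_V(n)$ coming from the poles $\zeta\neq 1$ are of order $O(n^{\mu_1-1})$, while the $\zeta=1$ piece sums to a polynomial in $n$ of degree $\mu_1$ with positive leading coefficient. Therefore $d_V(n)\sim c\,n^{\mu_1}$ for some $c>0$, and
$$\lim_{n\to\infty}\frac{\log d_V(n)}{\log n}=\mu_1;$$
since $V$ generates $R$, the remark following Definition \ref{GKD} identifies this limit with $\mathrm{Dim}\,R$, completing the proof.

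\emph{The main obstacle} is the use of non--negativity of the coefficients $d_k$ to exclude a pole on the unit circle of order strictly larger than $\mu_1$: without the positivity--based comparison $|H_R(r\zeta)|\leq H_R(r)$, partial--fraction bookkeeping would only yield $\mathrm{Dim}\,R\leq \max_\zeta \mu_\zeta$, which could in principle exceed the pole order of $H_R(t)$ at $t=1$.
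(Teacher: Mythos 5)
Your argument is correct, and it fills in a statement the paper itself leaves unproved (Theorem \ref{dimH} is asserted with ``One has''; the closest the paper comes to a proof is the later Proposition \ref{lafck}, proved for the colength series). The routes differ in where positivity enters and in how the dimension is identified. The paper's technique (in Proposition \ref{lafck}) stays with real partial fractions: it rewrites $H_R(t)=\sum_{i\le d} p_i(t)/(1-t^m)^i+q(t)$ with $m=\mathrm{lcm}(h_j)$ and $\deg p_i<m$, reads off the quasi--polynomial from the binomial expansion of $t^j/(1-t^m)^i$, and uses non--negativity of the coefficients only at the end to force all coefficients of $p_d$ to be non--negative, hence $p_d(1)\neq 0$ and the pole order at $t=1$ equals $d=\deg D+1$. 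You instead expand at every root of unity, use the dominant--singularity comparison $|H_R(r\zeta)|\le H_R(r)$ to bound $\mu_\zeta\le\mu_1$, and then linear independence of characters mod $L$ to see that the top degree survives on some coset; this is the standard analytic--combinatorics argument and is equally valid. You also go one step further than the paper: by taking $V=F\cdot 1+\mathrm{span}(X)$, writing $d_V(n)=\sum_{k\le n}d_k$, and controlling the oscillating contributions by Abel summation, you actually prove that the pole order coincides with the Gel'fand--Kirillov dimension of Definition \ref{GKD}, a point the paper only asserts (``one can see equals the Gelfand--Kirillov dimension''); note this step uses that $R$ is generated in degree one, which is the situation of the relatively free algebras to which the theorem is applied, and with bounded generator degrees the same $\log/\log$ computation would still go through. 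In short: the paper's version is more elementary and purely formal at the level of power series, while yours buys the explicit bridge to GK--dimension at the cost of a (mild) complex--analytic detour.
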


If $R$  is a commutative algebra, finitely generated over a field $F$, has a finite dimension which can be defined in several ways, either as Krull dimension, that is the length of a maximal chain of prime ideals or as Gelfand--Kirillov dimension, or finally when $R$ is a domain as the trascendence degree of the field of fractions of $R$ over $F$.   For a finitely generated commutative graded algebra  $R$, the Hilbert series  is a rational function of type  of Formula \eqref{Hser}, the dimension equals the dimension of its associated affine variety $V(R)$.

For a module $M$  we have the notion of {\em support of $M$}, that is the set of point $p\in V(R)$  where $M$ is not zero, that is  $M\otimes_RR_p\neq 0$ where $R_p$ is the local ring at $p$. Then the dimension of $M$ equals the dimension of its support. The support is computed as follows\begin{remark}\label{moddi}
It is well known that a finitely generated module $M$ over a commutative Noetherian ring $R$ has a finite filtration $0\subset M_1\subset M_2\ldots \subset M_k=M$ such that $M_{i+1}/M_i=R/P_i$ where $P_i$ is a prime ideal.

If $R,M$ are graded and $R$ finitely generated, the $M_i$ can be taken graded and the Hilbert series is the sum of the Hibert series of the $R/P_i$, the dimension is the maximum of the dimensions of the $R/P_i$. 

 In the language of algebraic geometry  $R$  defines the algebraic variety $V(R)$ and the $P_i$ some subvarieties with coordinate rings $R/P_i$, one sees by induction that  $M$ is {\em supported}  on the union of these subvarieties and thus  its dimension is the dimension of its support.
\smallskip

If $M$ is torsion free  over a domain $R$ then it contains a free module $R^k\subset M$ so that $M/R^k$  is supported in a proper subvariety, hence it has lower dimension and the dimension of $M$ equals that of $R$.
\end{remark}
 We will apply this to the non commutative relatively free algebras.

\subsubsection{The dimension of relatively free algebras \label{diref}}

We take as definition of dimension for a relatively free algebra, the one given by its Hilbert series, which measures growth and one can see equals the Gelfand--Kirillov dimension. It is almost never equal to the Krull dimension which instead equals the dimension of $R/J$  where $J$ is the nilpotent radical and it is well known that $R/I$ is a ring of generic matrices since the only semiprime $T$--ideals are the $T$--ideals of identities of matrices.

Let us first analyze a fundamental algebra  $D$, with semi--simple part  $\oplus_{i=1}^qM_{n_i}(F)$.

Take the relatively free algebra  in $m$ variables  $\mathcal F_D(m):=F\langle \xi_1,\ldots,\xi_m\rangle $ for $D$.  We have an inclusion of  $\mathcal F_D(m)\subset \mathcal F_D(m) \mathcal T_D(m)$ and also an inclusion of the ideal $K_D\subset \mathcal F_D(m)$ generated by Kemer polynomials.

Since  both $K_D$ and $\mathcal F_D(m) \mathcal T_D(m)$  are finitely generated torsion free modules over $  \mathcal T_D(m)$ it follows that the   dimension of the  4 Hilbert series of  $K_D,\mathcal F_D(m), \ \mathcal F_D(m) \mathcal T_D(m),\   \mathcal T_D(m)$ are all equal to the Gel'fand Kirillov  dimension of the algebras $\mathcal F_D(m), \ \mathcal F_D(m) \mathcal T_D(m),\   \mathcal T_D(m)$.  We have computed  the dimension   $ \dim \mathcal T_D(m)=(m-1)t+q  $ here $t=\sum_{i=1}^q  n_i^2$ is also the first Kemer index.
\begin{proposition}\label{dimfua}\begin{enumerate}
\item The GK  dimension of the relatively free algebra in $m$ variables for a fundamental algebra  $D$  is  $(m-1)t+q  $ where $t  $ is  the first Kemer index.

\item The GK  dimension of the relatively free algebra in $m$ variables for a direct sum $\oplus_i D_i$  of fundamental algebras is the maximum of the GK dimension of the relatively free algebras in $m$ variables for the fundamental algebras  $D_i.$

\item If $\Gamma$ is a $T$--ideal containg a Capelli list and $\Gamma=\bigcap_i\Gamma_i$  with the $T$--ideals $\Gamma_i$ irreducible then the dimension of $F\langle X\rangle/\Gamma$ is the maximum of   the dimensions of $F\langle X\rangle/\Gamma_i$, each one of these being  the relatively free algebra of a fundamental algebra.
\end{enumerate}

\end{proposition}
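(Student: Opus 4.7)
The plan is to sandwich the relatively free algebra $\mathcal F_D(m)$ between the Kemer ideal $K_D$ and the trace extension $\mathcal F_D(m)\mathcal T_D$, both of which are finitely generated $\mathcal T_D$-modules (Theorem \ref{trcealg} and Corollary \ref{kapo0}), and then read off the Gel'fand--Kirillov dimension from the geometry of the variety $W_{n_1,\ldots,n_q}$ whose coordinate ring is $\mathcal T_D$ (Lemma \ref{coorre}). For the upper bound in (1), Theorem \ref{dimH} combined with Remark \ref{moddi} gives that any finitely generated graded $\mathcal T_D$-module has GK dimension at most $\dim \mathcal T_D$; applying this to $\mathcal F_D(m)\mathcal T_D$ and restricting to the subalgebra $\mathcal F_D(m)$ yields $\dim \mathcal F_D(m) \leq \dim \mathcal T_D$. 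For the matching lower bound I would invoke fundamentality of $D$ (Theorem \ref{primoK}) to get $K_D \neq 0$, and then use the embedding $K_D \hookrightarrow J^s \otimes_F L$ from the proof of Lemma \ref{fait} to see that $K_D$ is a nonzero torsion-free finitely generated module over the domain $\mathcal T_D$, whose dimension therefore equals $\dim \mathcal T_D$ by Remark \ref{moddi}. Finally, Remark \ref{samedi} and the formula $\dim V_n(m) = (m-1)n^2+1$ stated in \S\ref{vase} yield
$$\dim \mathcal T_D = \dim W_{n_1,\ldots,n_q} = \sum_{i=1}^q ((m-1)n_i^2 + 1) = (m-1)t + q,$$
since $\prod_i V_{n_i}(m) \to W_{n_1,\ldots,n_q}$ is surjective and generically finite.

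For part (2), with $A = \oplus_i D_i$, the lower bound $\dim \mathcal F_A(m) \geq \max_i \dim \mathcal F_{D_i}(m)$ follows from the surjections $\mathcal F_A(m) \twoheadrightarrow \mathcal F_{D_j}(m)$ arising from the inclusion $\mathrm{Id}(A) \subset \mathrm{Id}(D_j)$, and the reverse bound follows from the embedding $\mathcal F_A(m) \hookrightarrow \oplus_j \mathcal F_{D_j}(m)$ recalled just before Remark \ref{quop}, since the Hilbert series of a direct sum has pole order at $t = 1$ equal to the maximum of the summand pole orders. Part (3) reduces to (2) via Proposition \ref{primdec}: each irreducible $\Gamma_i$ is the $T$-ideal $\mathrm{Id}(D_i)$ of a fundamental algebra $D_i$, so taking $A = \oplus_i D_i$ gives $\mathrm{Id}(A) = \bigcap_i \Gamma_i = \Gamma$, hence $F\langle X\rangle/\Gamma = \mathcal F_A(m)$ and (2) applies directly.

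The main obstacle lies in verifying the torsion-freeness of $K_D$ over $\mathcal T_D$ in the lower bound for (1). Lemma \ref{fait} phrased the argument for the projections $K_{\nu,i} \subset \mathcal F_{A_i}\mathcal T_{A_i}$ in a decomposition $A = \oplus A_i$ into fundamentals, while here $D$ is a single fundamental but $\bar D$ may have several simple summands; consequently $\mathcal T_D$ sits as a subring of the product $\mathcal T_{M_{n_1}}(Y) \otimes \cdots \otimes \mathcal T_{M_{n_q}}(Y)$, and one needs Lemma \ref{intre} to ensure this extension is integral so that the dimension is indeed the expected $(m-1)t+q$. A secondary issue is the small-$m$ boundary case, since $\dim V_n(1) = n$ rather than $(m-1)n^2 + 1$; this should be addressed separately, although the formula is correct for all $m \geq 2$.
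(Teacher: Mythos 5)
Your proof follows essentially the same route as the paper: part (1) is exactly the paper's sandwich $K_D\subset \mathcal F_D(m)\subset \mathcal F_D(m)\mathcal T_D(m)$ of finitely generated torsion-free $\mathcal T_D(m)$-modules together with $\dim \mathcal T_D(m)=\dim W_{n_1,\ldots,n_q}=(m-1)t+q$, and parts (2)--(3) use the same embedding/quotient argument and Proposition \ref{primdec}. The small-$m$ caveat you raise is also implicit in the paper (which states the variety dimension only for $m\geq 2$ and "for $m$ large" in Corollary \ref{dimRRR1}), so no essential divergence or gap.
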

\begin{proof}
The first part we have just proved, as for the second  remark that the relatively free algebra in $m$ variables for a direct sum $\oplus_i D_i$ is contained in the direct sum of the relatively free algebras in $m$ variables for the summands $  D_i$, so its dimension is smaller or equal than the maximum of the dimensions of the algebras relative to the summands.  On the other hand  each  relatively free algebra  in $m$ variables for the summands $  D_i$ is also a quotient of the relatively free algebras in $m$ variables for the direct sum so the claim follows.The third part  follows from the second and the fact that an irreducible $T$--ideal containing a Capelli list is the ideal of PI's of a fundamental algebra (\ref{primdec}).
\end{proof}

We can also approach in a more intrinsic form using the filtration.
By Theorem \ref{intrisK} and the remark following  the dimension is the maximum of the dimensions of the modules $K_{i+1}/K_i$ each one is supported in a union of varieties $W_{n_1,\ldots,n_k}$ but on each of these subvarieties the module is torsion free so it has the same dimension as its support, we deduce:
\begin{theorem}\label{dimRRR}
The dimension of  $R$ is the maximum   dimension  of the varieties $W_{n_1,\ldots,n_k}$.
\end{theorem}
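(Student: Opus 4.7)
The plan is to combine the canonical filtration from Theorem \ref{finKs} with the geometric description of the support from Theorem \ref{intrisK}, and then to use Remark \ref{moddi} on the dimension of modules to reduce the computation to dimensions of subvarieties $W_{n_1,\ldots,n_k}$.

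First I would use the filtration $0\subset K_0\subset K_1\subset\ldots\subset K_u=R$ of Theorem \ref{finKs}. Since Hilbert series are additive on short exact sequences of graded spaces, we have
\begin{equation*}
H_R(t)=\sum_{i=0}^{u-1}H_{K_{i+1}/K_i}(t).
\end{equation*}
By Theorem \ref{dimH} the GK--dimension of $R$ is the order of pole of $H_R(t)$ at $t=1$, so it equals the maximum of the orders of pole of the individual series $H_{K_{i+1}/K_i}(t)$, i.e.\ the maximum of the dimensions of the successive quotients.

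Next, for each $i$, part (4) of Theorem \ref{finKs} tells us that $K_{i+1}/K_i$ is a finitely generated module over the commutative graded algebra $\mathcal T_{\bar R_i}$. Here is where Theorem \ref{intrisK} enters: the image of $\mathcal T_t(Y)$ acting on the module $K_{i+1}/K_i$ of $\nu$-Kemer polynomials (for the reduced $T$-ideal corresponding to $R_i$) is the coordinate ring of a union of varieties of the form $W_{n_1,\ldots,n_{q_j}}$, attached to the fundamental summands $A_j$ of maximal Kemer index at that stage. By Remark \ref{moddi} the dimension of $K_{i+1}/K_i$, as a finitely generated module over this commutative algebra, equals the dimension of its support, which is the union of these $W_{n_1,\ldots,n_{q_j}}$, hence the maximum of their dimensions.

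The key remaining point, which will be the main technical obstacle, is to argue that on each irreducible component $W_{n_1,\ldots,n_{q_j}}$ the module $K_{i+1}/K_i$ has full dimension (i.e.\ is not supported only on a proper subvariety of that component). This is handled as in the paragraph preceding the statement: by projecting to the relevant fundamental summand $A_j$, the image of the module sits inside $J_j^{s}\otimes L$, which is a vector space over the field $L$ of rational functions on $V_{n_1}(m)\times\cdots\times V_{n_{q_j}}(m)$. Hence the image is torsion free over the corresponding domain, and by the torsion--free statement in Remark \ref{moddi} its dimension matches that of $W_{n_1,\ldots,n_{q_j}}$ itself (using Lemma \ref{intre} and Lemma \ref{coorre} to compare dimensions on the two sides of the birational map $j$). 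Taking the maximum first over the components appearing in the support of a given $K_{i+1}/K_i$, then over $i=0,\ldots,u-1$, yields the claim, since every variety $W_{n_1,\ldots,n_k}$ arising from a fundamental summand of $A$ occurs as a component of the support of some quotient in the filtration.
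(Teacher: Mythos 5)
Your argument is correct and follows essentially the same route as the paper: the canonical filtration of Theorem \ref{finKs}, the support description of Theorem \ref{intrisK}, and the torsion--freeness over the domains $\mathcal T_{A_j}$ (via the containment in $J_j^{s}\otimes L$) combined with Remark \ref{moddi} to conclude that each $K_{i+1}/K_i$ has the dimension of its support. You merely spell out the torsion--freeness and the dimension comparison across the map $j$ in more detail than the paper's terse proof does.
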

Now the dimension of  $W_{n_1,\ldots,n_k}$ is $mt-\sum_{i=1}^q (n_i^2-1)=(m-1)t+q$, so we see that as $m$ grows the maximum is obtained  from the factors $R_i$ for which the first Kemer index equals the Kemer index $t$ of $R$ and among these the one with maximum $q$.
\begin{definition}\label{qinv}
The integer $q$ is an invariant of the $T$--ideal, called {\em $q$ invariant}.
\end{definition}
\begin{corollary}\label{dimRRR1}
The dimension of  $R(m)$ for $m$ large is $(m-1)t+q$ where $t$ is the first Kemer index  and $q$ is the $q$ invariant.\end{corollary}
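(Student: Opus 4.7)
The plan is to combine the geometric description of the support of $K_{i+1}/K_i$ (Theorem \ref{dimRRR}) with the explicit dimension formula for the varieties $W_{n_1,\ldots,n_k}$, and then examine which component dominates when $m$ is allowed to grow.

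First I would recall that by Theorem \ref{dimRRR} the GK dimension of $R(m)$ equals the maximum, over the pieces $K_{i+1}/K_i$ of the canonical filtration, of the dimensions of the supports of these pieces, and each support is a finite union of varieties of the shape $W_{n_1,\ldots,n_k}$ arising from the semisimple parts of the fundamental summands with maximal Kemer index at the $i$-th stage. Next I would compute the dimension of $W_{n_1,\ldots,n_k}$ directly: by Remark \ref{samedi} the surjection $\prod_{i=1}^{k} V_{n_i}(m)\to W_{n_1,\ldots,n_k}$ is generically finite, hence $\dim W_{n_1,\ldots,n_k}=\sum_{i=1}^{k}\dim V_{n_i}(m)=\sum_{i=1}^{k}\bigl((m-1)n_i^2+1\bigr)=(m-1)t+q$, where $t=\sum n_i^2$ and $q=k$.

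Now I would argue that for $m$ large the dominant term is $(m-1)t$, so the maximum dimension in the filtration is achieved on varieties attached to fundamental summands whose first Kemer index equals the first Kemer index $t$ of $R$ itself; among the finitely many of these, the one with the largest number $q$ of simple blocks wins, giving dimension $(m-1)t+q$. To see that no component with strictly smaller $t'<t$ can catch up, note that $(m-1)t'+q'<(m-1)t+q$ whenever $m-1>q'-q$, which certainly holds for all $m$ sufficiently large since the set of $(t',q')$ appearing is finite (this is where ``$m$ large'' enters).

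The main obstacle is essentially bookkeeping rather than anything deep: one must make sure the invariant $q=q(R)$ really depends only on the $T$-ideal, not on the particular fundamental decomposition used. This is guaranteed by Corollary \ref{intrisK1} together with the intrinsic description of the support of the Kemer module given in Theorem \ref{intrisK}, which pin down the maximal-dimensional components $W_{n_1,\ldots,n_q}$ independently of the choice of the auxiliary algebra $A$. With this in hand, applying Theorem \ref{dimRRR} and the computation above finishes the proof.
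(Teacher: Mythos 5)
Your argument is correct and follows essentially the same route as the paper: Theorem \ref{dimRRR} reduces the question to the dimensions of the varieties $W_{n_1,\ldots,n_k}$, the computation $\dim W_{n_1,\ldots,n_k}=(m-1)t+q$ (via Remark \ref{samedi}, equivalently the quotient count $mt-\sum_i(n_i^2-1)$ used in the paper), and the observation that for $m$ large the components with maximal first Kemer index $t$ and, among them, maximal $q$ dominate. Your explicit inequality $(m-1)t'+q'<(m-1)t+q$ for $m-1>q'-q$ and the appeal to Theorem \ref{intrisK} for the intrinsic nature of $q$ merely make precise what the paper states more briefly.
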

Observe that when $R$ is the ring of generic $n\times n$ matrices, then $t=n^2$ and $q=1$, the formula is valid for all $m\geq 2$. By work of Giambruno Zaicev a similar statement, that the formula is valid for all $m\geq 2$,  is true for block triangular matrices  with $q$ equal the number of blocks and $t$ the dimension of the semisimple part. 

\subsection{Cocharacters \label{icocc}}   We want now to extend  
work of  Berele \cite{berele2.2} and Belov \cite{belov0} in which they show how cocharacter multiplicities are described by partition functions.  This requires some standard preliminaries.
\subsubsection{Partition functions}
The notion of partition function can be discussed  for a sequence of integral vectors $S:=\{a_1,\ldots,a_m\},\ a_i\in\mathbb Z^p$ for which there is a linear function $f$ with $f(a_i)>0,\ \forall i$.  \footnote{This restriction is essential otherwise the value of the partition function is $\infty$} 

Then one defines   the partition function on $\mathbb Z^p$
$$b\in \mathbb Z^p,\quad P_S(b)=\#\{t_1,\ldots,t_m\in \mathbb N\,|\,\sum_{i=1}^mt_ia_i=b\}.$$
Of course $P_S(b)=0$ unless  $b\in C(S):=\{\sum_ix_ia_i\,|\, x_i\in \mathbb R^+\}$  the {\em positive cone} generated by the elements $a_i$. The assumption $f(a_i)>0,\ \forall i$ means that $C(S)$ is {\em pointed}, that is it does not contain any line (only half lines).
%

It is customary to express the partition function by a  generating function, this  is a series in $p$ variables $x_1,\ldots,x_p$.  When $b=(b_1,\ldots,b_p)\in \mathbb Z^p$ we set $x^b:=\prod_{i=1}^px_i^{b_i}$ and then setting 
  $$ \mathcal P_{S}  =\sum_{b\in \mathbb Z^p}P_S(b)x^b   $$ one sees that
   $$ \mathcal P_{S}  =\frac{1}{\prod_{i=1}^m(1-x^{a_i})} .$$
   This formal series  is also truly convergent on  some region of space. 
 We can interpret this in terms of graded algebras (or geometrically as torus embedding).   Let   $R_S=F[y_1,\ldots,y_m]$  be the polynomial algebra in $m$ variables $y_i$ to which we give a  $\mathbb Z^p$ grading by assigning to $y_i$ the degree  $a_i$.  For every graded vector space $V=\oplus_{a\in \mathbb Z^p}V_a$, for which  $\dim(V_a)<\infty,\ \forall a$, we can define its {\em graded Hilbert series}  $$H_V=\sum_{a\in \mathbb Z^p}\dim(V_a)x^a.$$  One has to be careful when manipulating such series since in general a product of two such series makes no sense, so if we have two graded vector spaces $V,W$  with the previous restriction on graded dimensions, in general $V\otimes W$ does not satisfy this restriction. The product   makes sense if we restrict to series supported in a given pointed cone, in this case we have that $H_{V\otimes W}= H_{V } H_{  W} $.\smallskip

In general let us consider a finitely generated  graded $R_S$ module $M$ then we have the
\begin{lemma}\label{grmvv} The partition function of  $S$ coincides with the Hilbert series of  $R_S$.

The Hilbert series of $M$ has the form 
\begin{equation}\label{himul0}
H_{M}  =\frac{p(x)}{\prod_{i=1}^m(1-x^{a_i})},\  p(x)\in \mathbb Z[x_1^{\pm 1},\ldots, x_p^{\pm 1} ]. 
\end{equation}  That is $p(x)$ is a finite linear combination with integer coefficients of Laurent monomials.\smallskip

If   $S\subset \mathbb N^p$ and $M$ is graded in $\mathbb N^p$ then the elements $p(x)$ are polynomials.
\end{lemma}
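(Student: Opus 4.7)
The plan is to handle part 1 by a direct computation on the monomial basis of $R_S$, then treat parts 2 and 3 uniformly via a finite graded free resolution of $M$. For part 1, the polynomial ring $R_S = F[y_1,\ldots,y_m]$ has monomial basis $\{y_1^{t_1}\cdots y_m^{t_m}\}$, and with $\deg y_i = a_i$ the monomial $\prod y_i^{t_i}$ sits in graded degree $\sum t_i a_i$. Hence the graded component of degree $b$ has dimension $P_S(b)$, which is finite because, choosing a linear $f$ with $f(a_i)>0$, the relation $f(b)=\sum t_i f(a_i)$ bounds each $t_i$. Expanding the geometric series $1/(1-x^{a_i}) = \sum_{k\ge 0} x^{k a_i}$ and taking the product (legitimate since all series are supported in the pointed cone $C(S)$) yields $\mathcal{P}_S = 1/\prod_i (1-x^{a_i}) = H_{R_S}$.

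For part 2, I would invoke the Hilbert syzygy theorem: since $R_S$ is a graded polynomial ring in $m$ variables over $F$, any finitely generated graded $R_S$-module $M$ admits a finite graded free resolution
$$
0 \to F_k \to F_{k-1} \to \cdots \to F_0 \to M \to 0,
$$
with each $F_i = \bigoplus_j R_S(-b_{i,j})$ a finite direct sum of shifted free modules. Since the Hilbert series is additive on short exact sequences, alternating summation gives
$$
H_M \;=\; \frac{\sum_{i,j}(-1)^i x^{b_{i,j}}}{\prod_{l=1}^m (1-x^{a_l})},
$$
which is of the required form with $p(x)\in\mathbb{Z}[x_1^{\pm 1},\ldots,x_p^{\pm 1}]$. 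A more elementary alternative, closer in spirit to Remark \ref{moddi}, is induction on the number of homogeneous generators using a short exact sequence $0\to N\to \bigoplus_j R_S(-d_j)\to M\to 0$, where $N$ is finitely generated by Noetherianity; this still reduces to the syzygy theorem implicitly through the bound on projective dimension.

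For part 3, if $S\subset\mathbb{N}^p$ and $M$ is $\mathbb{N}^p$-graded, then homogeneous generators of $M$ have degrees in $\mathbb{N}^p$, and kernels of degree-preserving maps between $\mathbb{N}^p$-graded modules remain $\mathbb{N}^p$-graded. Consequently all shifts $b_{i,j}$ in the resolution may be chosen in $\mathbb{N}^p$, so the numerator $p(x)$ is an ordinary polynomial rather than a Laurent polynomial. The main (and really quite minor) subtlety is not a conceptual obstacle but the need to make the formal manipulations of infinite series rigorous: this is precisely what the pointed-cone hypothesis $f(a_i)>0$ handles, ensuring that all coefficients are finite and that the relevant finite products of such series are well defined termwise. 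With that in place the lemma reduces entirely to standard commutative-algebra bookkeeping.
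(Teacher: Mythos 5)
Your proof is correct. The paper gives no argument for this lemma at all --- it is stated as a standard fact of (multi)graded commutative algebra, immediately after the computation identifying $\mathcal P_S$ with $1/\prod_i(1-x^{a_i})$ --- so there is no authorial proof to compare against; your write-up simply supplies the missing standard argument. Part 1 (monomial basis, finiteness of each graded piece from the pointedness functional $f$, termwise product of geometric series supported in $C(S)$) is exactly the intended content of the displayed identity preceding the lemma. For part 2, your syzygy-theorem route is fine, with one small point you gloss over: in the $\mathbb Z^p$-graded setting one needs that a finite graded resolution by \emph{shifted free} modules exists, i.e.\ that finitely generated graded projectives are graded free; this follows from graded Nakayama because the degree-zero part of $R_S$ is $F$, which is precisely where the hypothesis $f(a_i)>0$ enters again --- worth a sentence, since without pointedness the statement can fail. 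An alternative that avoids this entirely is the classical Hilbert--Serre induction on the number of variables: the exact sequence $0\to K\to M(-a_m)\xrightarrow{\,y_m\,} M\to M/y_mM\to 0$ gives $(1-x^{a_m})H_M=H_{M/y_mM}-x^{a_m}H_K$ with $K$ and $M/y_mM$ finitely generated over $F[y_1,\ldots,y_{m-1}]$, and induction finishes; this is closer in spirit to the paper's Remark \ref{moddi}-style bookkeeping and is marginally more elementary. Part 3 is handled correctly: homogeneous generators and kernels stay $\mathbb N^p$-graded, so all shifts lie in $\mathbb N^p$ and the numerator is a genuine polynomial.
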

\begin{definition}\label{nicer}
In  \cite{berele2.2}, Berele calls a rational function of the type of Formula \eqref{himul0} with $p(x)$ a polynomial a {\em nice rational function}.

If we set all the variables $x_i$  equal to $t$  in a nice rational function $H$ we have a nice rational function of $t$, the order of the pole at $t=1$  of this nice rational function will be called the {\em dimension}  of $H$.

\end{definition}

This dimension  gives an information on the growth of  the coefficients of the generating function $H (t):=\sum_{k=0}^\infty c_kt^k=\frac{p(t)}{\prod (1-t^{h_i})}$. In fact:
\begin{proposition}\label{lafck}
The function  $c_k$  for $k$ sufficiently large is a quasi--polynomial, that is a polynomial on each coset  modulo  the least common multiple $m$ of the $h_i$. 

 If $c_k\geq 0$ for $k>>0$  then the dimension which equals the order of the pole of this function at $t=1$  equals the (maximum) degree of these polynomials plus 1. 

\end{proposition}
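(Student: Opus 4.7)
The plan is to use the partial fraction decomposition of $H(t)$ over $\mathbb{C}$. Factor the denominator as $\prod_{i=1}^N (1-t^{h_i}) = \prod_{\zeta} (1-\zeta^{-1}t)^{n_\zeta}$, where $\zeta$ runs over roots of unity of order dividing $m$ and $n_\zeta := \#\{i : \zeta^{h_i} = 1\}$. Note that $n_1 = N$ is the order of the pole at $t=1$. A standard partial fractions expansion then gives
\begin{equation*}
H(t) = P(t) + \sum_{\zeta^m = 1}\ \sum_{j=1}^{n_\zeta} \frac{a_{\zeta,j}}{(1-\zeta^{-1}t)^j},
\qquad P(t)\in\mathbb{C}[t].
\end{equation*}

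For the first assertion, I expand each elementary fraction as a geometric series using $(1-\zeta^{-1}t)^{-j} = \sum_{k\geq 0}\binom{k+j-1}{j-1}\zeta^{-k}t^k$. Thus for $k > \deg P$ we have
\begin{equation*}
c_k = \sum_{\zeta^m = 1} \zeta^{-k} q_\zeta(k), \qquad q_\zeta(k) := \sum_{j=1}^{n_\zeta} a_{\zeta,j}\binom{k+j-1}{j-1},
\end{equation*}
where $q_\zeta$ is a polynomial in $k$ of degree $\leq n_\zeta - 1$. Restricting $k$ to a residue class $r$ modulo $m$ makes every factor $\zeta^{-k} = \zeta^{-r}$ a constant, so on the coset $\{k \equiv r \pmod m\}$ the function $c_k$ agrees with a polynomial $Q_r(k)$ of degree at most $N^\star - 1$, where $N^\star := \max_\zeta n_\zeta$. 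This proves the quasi-polynomial claim, and already shows that the degree of each $Q_r$ is bounded by $N^\star - 1$.

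For the second assertion, I must show that the hypothesis $c_k\geq 0$ for $k\gg 0$ forces $n_1 = N^\star$, i.e.\ that the pole at $t=1$ has the maximal possible order. The leading coefficient of $Q_r$ is $g(r)/(N^\star-1)!$ where
\begin{equation*}
g(r) \;=\; \sum_{\zeta\,:\, n_\zeta = N^\star} \zeta^{-r}\, a_{\zeta,N^\star}.
\end{equation*}
Since $c_k\geq 0$ for $k$ large, on each coset either $Q_r \equiv 0$ or else the leading coefficient $g(r)$ is strictly positive; in either case $g(r)\geq 0$ for every $r\in\{0,1,\dots,m-1\}$. The key trick is now to sum over $r$: by orthogonality of characters, $\sum_{r=0}^{m-1}\zeta^{-r}=0$ unless $\zeta=1$, so
\begin{equation*}
\sum_{r=0}^{m-1} g(r) \;=\; m\cdot a_{1,N^\star}.
\end{equation*}
Hence $a_{1,N^\star}\geq 0$. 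If $a_{1,N^\star}$ were zero then $g(r)=0$ for every $r$ (a sum of nonnegatives with vanishing total), whence by inverse DFT all $a_{\zeta,N^\star}$ vanish, contradicting the definition of $N^\star$. Therefore $a_{1,N^\star}>0$, so $n_1\geq N^\star$; combined with the trivial $n_1\leq N^\star$ we obtain $n_1 = N^\star$, which is exactly the desired equality between the pole order at $t=1$ and (max degree of the $Q_r$) $+1$.

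The routine part is the partial fractions computation; the genuine content is the second paragraph, and the only real obstacle is the averaging argument using orthogonality of characters to convert pointwise non-negativity of $c_k$ into the statement $a_{1,N^\star}>0$. Everything else is bookkeeping.
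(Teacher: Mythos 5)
Your core mechanism is the right one, and it is in substance the paper's own argument: the paper groups all the roots into the single factor $(1-t^m)$, writes $H=\sum_i p_i(t)/(1-t^m)^i+q(t)$ with $\deg p_i<m$, and its evaluation $p_d(1)=\sum_j p_{d,j}>0$ is exactly your averaging of the coset-leading coefficients, while your version does the same thing over $\mathbb C$ with orthogonality of characters. However, as written there is a genuine flaw in the setup: you identify $n_\zeta=\#\{i:\zeta^{h_i}=1\}$, the multiplicity of $(1-\zeta^{-1}t)$ in the \emph{given} denominator, with the actual order of the pole of $H$ at $\zeta$. The representation $p(t)/\prod_i(1-t^{h_i})$ is not assumed reduced (and in the paper's applications, sums of Hilbert series, it typically is not), so $p$ may vanish at roots of unity and the true pole orders can be strictly smaller than the $n_\zeta$. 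For example $H(t)=\frac{1-t}{(1-t)(1-t^2)}=\frac{1}{1-t^2}$ has $n_1=2$ but only a simple pole at $t=1$. Two of your steps then fail: the claim that ``$n_1=N$ is the order of the pole at $t=1$'' is false in general; and the punchline ``all $a_{\zeta,N^\star}$ vanish, contradicting the definition of $N^\star$'' is not a contradiction, since $N^\star$ was defined from the $n_\zeta$ and nothing prevents every top coefficient $a_{\zeta,N^\star}$ from being zero (in the example above they all are). Worse, with your definitions $n_1=N=\max_\zeta n_\zeta=N^\star$ holds trivially, so the equality you end up establishing is a tautology and is not the proposition, which concerns the actual pole order at $t=1$ versus the actual maximal degree of the $Q_r$.

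The repair is short and keeps your argument intact: let $D^\star$ be the largest $j$ such that $a_{\zeta,j}\neq 0$ for some $\zeta$ (equivalently, the maximal actual pole order of $H$ at the $m$-th roots of unity). Then every $Q_r$ has degree at most $D^\star-1$, and the coefficient of $k^{D^\star-1}$ in $Q_r$ is $g(r)/(D^\star-1)!$ with $g(r)=\sum_\zeta\zeta^{-r}a_{\zeta,D^\star}$. Your positivity and averaging steps now run verbatim: eventual non-negativity of $c_k$ gives $g(r)\geq 0$ for all $r$, orthogonality gives $\sum_{r=0}^{m-1}g(r)=m\,a_{1,D^\star}$, and if $a_{1,D^\star}=0$ then all $g(r)=0$, hence by inverse DFT all $a_{\zeta,D^\star}=0$, which now genuinely contradicts the choice of $D^\star$. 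Thus $a_{1,D^\star}>0$, which shows at once that the pole at $t=1$ has order exactly $D^\star$ and that some $Q_r$ has degree exactly $D^\star-1$; that is the proposition. With this correction your proof is sound and is essentially the paper's proof in complex-partial-fraction clothing.
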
 \begin{proof}
Let $m$ be the least common multiple of the  integers $h_i$ so that
$$m=h_ik_i,\implies (1-t^m)=(1-t^{h_i})(\sum_{j=0}^{k_i-1} t^{h_ij}).$$
It then follows that   $H(t)$ can be written in the form $\frac{P(t)}{(1-t^m)^d}$ with $P(t)$ some polynomial. 
Since for $i>0$  we have $$ \frac{t^m}{(1-t^m)^i}=\frac{1-(1-t^m)}{(1-t^m)^i}=\frac{1}{(1-t^m)^i}-\frac{1}{(1-t^m)^{i-1}},$$ it then follows that $H(t)$  has an expansion in partial fractions
$$H(t)=\sum_{i=1}^d\frac{p_i(t)}{(1-t^m)^i}+q(t)$$ with all the $p_i(t)$ polynomials of degree $<m$  in $t$  and $q(t)$ a polynomial.

Then remark that the generating function of
$$ \frac{t^j}{(1-t^m)^i}=\sum_{k=0}^\infty \binom{i+k-1}{i-1}t^{mk+j}, \ 0\leq j<m$$
gives a polynomial on the coset  $\mathbb Zm+j$ with positive values of degree $i-1$.

It follows that after a finite number of steps (given by the polynomial $q(t)$) the fucntion $c_k$  is a polynomial on each coset, and of  degree $d-1$  on some cosets  where the coefficients of $p_d(t)$  are different from 0.  If  $c_k$ is definitely positive it follows that all the coefficients of $p_d(t)$ are non negative, in particular $p(1)\neq 0$  and the order of the pole at $1$ of $H(t)$ is clearly $d$. \end{proof}
%
%
\subsubsection{The Theory of Dahmen--Micchelli} We want to recall quickly some features of the theory of partition functions.
Let $S$ be as before a list of integral vectors.  
We  want to decompose the cone $C(S)$ into   {\em big cells}   and define its singular and regular  points.  

The big cells are defined as the connected components of the open set of $C(S)$ of {\em regular points}, which is obtained when removing from $C(S)$ the {\em singular}  points which are defined as all linear combinations of some subset of $S$ which does not span $\mathbb R^p$. 

One finally needs the notion of {\em quasi--polynomial} on $\mathbb Z^p$. This is a function $f$ on $\mathbb Z^p$ for which there exists a subgroup $\Lambda\subset  \mathbb Z^p$  of finite index, so that  $f$, restricted to each coset of  $\Lambda$ in $  \mathbb Z^p$ coincides with some polynomial. 

In the theory of partition functions plays a major role a finite dimensional space of quasi--polynomials introduced by Dahmen--Micchelli. Let us recall their Theory.

\begin{itemize} 
\item 
Given a  list of integral vectors $S:=\{a_1,\ldots,a_m\},\ a_i\in \mathbb Z^p$ spanning $\mathbb R^p$ we call a  subset  $Y$ of $S$ a {\em cocircuit} if it  is minimal with the property that $S\setminus Y$ does not span $\mathbb R^p$. The set of all cocircuits of $S$ will be denoted by $ \mathcal E(S)$.

\item  To $S$ is associated a remarkable convex polytope 
the {\em Zonotope}  $$Z(S):=\{\sum_it_ia_i\,|\,0\leq t_i\leq 1\}.$$

\item  The faces of this zonotope come in opposite pairs corresponding to the cocircuits of $S$.  Moreover $Z(S)$ can be paved by parallelepipeds associated to all the bases of  $\mathbb R^p$ which can be extracted from $S$. Each of these parallelepipeds has volume a positive integer, the absolute value of the determinant of the corresponding basis elements.
\item 
For a list of integral vectors $Y$  we then define the difference operator $$\nabla_Y:=\prod_{y\in Y}\nabla_y, \quad \nabla_yf(x)=f(x)-f(x-y).$$

\item The space $DM(S)$ of Dahmen--Micchelli is the space of integral valued functions on    $  \mathbb Z^p$ solutions of the system  of difference equations $\nabla_Yf=0,\ Y\in \mathcal E(S)$.

\item The space $DM(S)$ is a space of quasi--polynomials of degree $m-p$, it is a free abelian group of  dimension   $\delta(S)$  the weighted number of bases extracted from $S$, or the volume of  $Z(S)$.

\item In fact if we take a generic shift $a-Z(S),\ a\in \mathbb R^p$ we have that $a-Z(S)\cap \mathbb Z^p$ has exactly $\delta(S)$ elements and the restriction of  $DM(S)$ to $a-Z(S)\cap \mathbb Z^p$ is an isomorphism with the space of integral valued functions on $a-Z(S)\cap \mathbb Z^p$.

\end{itemize}

The main Theorem on partition functions for which we refer to \cite{D6} is, assuming that $S$ spans $\mathbb R^p$.
\begin{theorem}\label{parfunt}
  
 $P_S$ is supported on the intersection of the lattice $\mathbb Z^p$ with the cone $C(S)$.

 Given a big cell $\mathfrak c$ of $C(S)$ and a point $a\in \mathfrak c$ very close to 0 one has that $a-Z(S)\cap \mathbb Z^p$ intersects $C(S)$ only in 0.

 $P_S$ coincides on each big cell $\mathfrak c$ with   the quasi polynomial in the space   $DM(S)$   which is 1 at 0 and equals 0 in the other points of $a-Z(S)\cap \mathbb Z^p$.

 \end{theorem}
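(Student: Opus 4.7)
The plan is to establish the three assertions in order, using the combinatorial description of $P_S$ together with the algebraic characterization of $DM(S)$ via its values on $a-Z(S)\cap\mathbb Z^p$. Part (1) is immediate from the definition: if $b=\sum_i t_i a_i$ with $t_i\in\mathbb N$ then $b\in C(S)$ by construction, so $P_S(b)=0$ whenever $b\notin C(S)\cap\mathbb Z^p$.

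For part (2), the observation is that $Z(S)\subset C(S)$, so $a-Z(S)\subset a-C(S)$. Since $C(S)$ is pointed, $C(S)\cap(-C(S))=\{0\}$, and so for $a$ sufficiently close to $0$ the set $a-Z(S)$ is a small perturbation of $-Z(S)\subset -C(S)$ and meets $C(S)$ only in a tiny neighbourhood of $0$. The point is then to rule out spurious lattice points: this is exactly where the assumption that $a$ lies in a big cell $\mathfrak c$ (rather than on a singular hyperplane) enters. Genericity of $a$ guarantees that the boundary of $a-Z(S)$ avoids $\mathbb Z^p\setminus\{0\}\cap C(S)$, so the only lattice point in $(a-Z(S))\cap C(S)$ is the origin.

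For part (3), the strategy is to use the uniqueness statement recalled just above the theorem: restriction $DM(S)\to \mathbb Z^{a-Z(S)\cap\mathbb Z^p}$ is an isomorphism. Thus there is a unique $Q_{\mathfrak c}\in DM(S)$ with $Q_{\mathfrak c}(0)=1$ and $Q_{\mathfrak c}(b)=0$ for all other $b\in a-Z(S)\cap\mathbb Z^p$. By part (2) combined with part (1), the function $P_S$ takes precisely these values on $a-Z(S)\cap\mathbb Z^p$ (noting that $P_S(0)=1$ because $0=\sum_i 0\cdot a_i$ is the only non-negative integer combination giving $0$, the pointedness of $C(S)$ excluding cancellations). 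Hence $P_S$ and $Q_{\mathfrak c}$ agree on the test set. The remaining task is to propagate this equality across the whole big cell $\mathfrak c$, which is done by showing that $P_S$ itself satisfies the Dahmen--Micchelli difference equations $\nabla_Y P_S=0$ on the interior of $C(S)$ away from the walls carried by any cocircuit $Y\in\mathcal E(S)$. One way to see this is via the generating function: the identity
\[
\prod_{y\in Y}(1-x^y)\cdot \mathcal P_S(x)=\frac{1}{\prod_{a_i\in S\setminus Y}(1-x^{a_i})}
\]
shows that $\nabla_Y P_S$ equals the partition function of $S\setminus Y$, which, since $S\setminus Y$ does not span $\mathbb R^p$, is supported on a proper sublattice that misses the interior of $\mathfrak c$ after a suitable translation. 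Consequently $P_S-Q_{\mathfrak c}$ is annihilated by $\nabla_Y$ for all $Y\in\mathcal E(S)$ on $\mathfrak c$ and vanishes on $a-Z(S)\cap\mathbb Z^p$; the uniqueness part of the Dahmen--Micchelli theorem then forces $P_S=Q_{\mathfrak c}$ throughout $\mathfrak c$.

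The main obstacle is the verification that $\nabla_Y P_S$ is supported off the relevant portion of the big cell when $Y$ is a cocircuit: the generating function identity identifies $\nabla_Y P_S$ with the partition function of $S\setminus Y$, whose support lies in the lower-dimensional cone $C(S\setminus Y)$, but one must check carefully that the translated test region $a-Z(S)\cap\mathbb Z^p$ avoids this lower-dimensional cone — which is precisely the content of part (2) together with the cocircuit property. Once this bookkeeping is settled, the extension from the test set to the whole big cell is formal.
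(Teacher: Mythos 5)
The paper itself does not prove this statement: it is quoted from the De Concini--Procesi monograph \cite{D6}, so your attempt has to be measured against the standard proof there. Your part (1), your part (2) (which in fact needs only pointedness plus a compactness argument, not the genericity of $a$), and your computation of the values of $P_S$ on the test set $(a-Z(S))\cap\mathbb Z^p$ (namely $1$ at $0$ and $0$ elsewhere, so that $P_S$ agrees there with the unique $Q_{\mathfrak c}\in DM(S)$ having these values) are all correct. The identity $\nabla_Y P_S=P_{S\setminus Y}$, hence $\nabla_Y P_S(x)=0$ for $x\notin C(S\setminus Y)$ and in particular for $x\in\mathfrak c$, is also correct.

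The genuine gap is the last step, which you declare ``formal'': passing from agreement on the test set to agreement on all of $\mathfrak c$ by invoking ``the uniqueness part of the Dahmen--Micchelli theorem.'' That uniqueness statement (restriction to $(a-Z(S))\cap\mathbb Z^p$ is injective) applies to elements of $DM(S)$, i.e.\ to functions satisfying $\nabla_Yf=0$ \emph{everywhere} on $\mathbb Z^p$; the function $g=P_S-Q_{\mathfrak c}$ is not such an element, since $\nabla_Y P_S=P_{S\setminus Y}$ is nonzero on the singular cones $C(S\setminus Y)$. If instead you try to use the equations as a recursion, $g(x)=-\sum_{\emptyset\neq T\subseteq Y}(-1)^{|T|}g\bigl(x-\sum_{y\in T}y\bigr)$ for $x\in\mathfrak c$, the points $x-\sum_{y\in T}y$ range over $x-Z(S)\subseteq\mathfrak c-Z(S)$: they generally leave $\mathfrak c$ and the test set, landing in other big cells (where $P_S$ agrees with a \emph{different} quasi-polynomial), on singular cones (where the equations for $P_S$ fail), or outside $C(S)$ (where $P_S=0$ but $Q_{\mathfrak c}$ need not vanish), so the recursion does not close up and no induction on the test set alone can start. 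This is precisely why the theorem proved in \cite{D6} is the stronger statement that $P_S$ agrees with the quasi-polynomial on the whole region $(\mathfrak c-Z(S))\cap\mathbb Z^p$, and why its proof is a genuine induction (deletion--contraction on $S$, via the filtration of the space of functions supported on $C(S)$ by those supported on the singular cones), not an appeal to the restriction isomorphism. Supplying that induction is the actual content of the theorem, and it is missing from your argument.
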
 
 Finally the partition function may be interpreted as counting the number of integral points in the variable convex compact polytope  $V(b):=\{(t_1,\ldots,t_m)\,|\, t_i\in\mathbb R^+,\quad \sum_it_ia_i=b\}$.
 
  Thus the partition function is asymptotic to the volume  of this variable polytope. This volume function, denoted by $T_S(b)$ is a {\em spline} that is a piecewise polynomial function on $C(S)$, polynomial, in each big cell,  of degree $m-p$ and again there is a remarkable theory  behind these functions, cf. \cite{D6}.
%
%
\subsubsection{$U$ invariants}
 In order to apply the previous theory to  cocharacters we need to recall some basic facts on highest weight vectors and  $U$ invariants, where $U$ is the subgroup of the linear group of strictly upper triangular matrices with 1 on the diagonal.
 
 Recall that we have seen, Proposition \ref{icocc}, that the multiplicity $m_\lambda$ of  a  cocharacter equals the multiplicity  of the highest weight vectors of weight $\lambda$ in the relatively free algebra in $k $ variables, if we are considering an algebra of dimension $k$ or more generally if it satisfies the Capelli list $\mathcal C_{k+1}$(Remark \ref{resva}).

  By Theorem  \eqref{finKs} the Hilbert series of $R$ is the sum of the contributions of the finitely many factors $K_{i+1}/K_i$ which are all modules  over finitely generated algebras. Moreover  these are all stable under the action of  the linear group $G=GL(k)$  so that we have some decomposition into irreducible representations
  \begin{equation}\label{cocK}
K_{i+1}/K_i=\oplus_\lambda S_\lambda(F^k)^{\oplus m_{i,\lambda}}. 
\end{equation}These multiplicities $m_{i,\lambda}$ can be computed   by taking the vector space  of $U$ invariants, $(K_{i+1}/K_i)^U=\oplus_\lambda (S_\lambda(F^k)^U)^{\oplus m_{i,\lambda}} $.

Since $\dim S_\lambda(F^k)^U=1$ is generated by a single vector of weight $\lambda$, we have  that the graded Hilbert series of $(K_{i+1}/K_i)^U$, which    is  $\mathbb N^k$ graded by the coordinates $m_i$ of the dominant weights, is the generating function of the  multiplicities  $m_{i,\lambda}$.

We thus have that $(K_{i+1}/K_i)^U$ is a  graded  module  over a graded polynomial algebra in finitely many variables $\omega_1,\ldots,\omega_k$ and the number  $m_{i,\lambda}$ is the dimension of its component  of degree $\lambda=\sum_jn_j\omega_j$.
  
 So our aim is to prove that $(K_{i+1}/K_i)^U$ is   finitely generated as  graded  module  over the polynomial algebra in the variables $\omega_1,\ldots,\omega_k$.
  
 Then we can apply  lemma \ref{grmvv}  and Theorem \ref{parfunt} which properly interpreted give the desired result.\smallskip
 
 This actually is  a standard fact on reductive groups and let us explain its proof.

%
%
Let $G$  be a reductive group in characteristic 0.  Its coordinate algebra $F(G)$ decomposes, under left and right action by $G$,  as $F(G)=\oplus V_i\otimes V_i^*$, where $V_i$ runs over all irreducible rational representations of $G$.

 If we fix a Borel subgroup $B$ with unipotent radical $U$   the algebra $F(G )^U=\oplus V_i\otimes (V_i^*)^U$  ($U$ acting on the right)  is a  finitely generated algebra over which $G$ acts on the left. In fact it is generated by the irreducible representations associated to the fundamental weights.
    
    This algebra is the coordinate ring of an affine variety $\overline{G/U}$ which contains as open orbit $G/U$.  
    
    For every subgroup $H$ of $G$  consider the invariants  under the right action $F(G)^H$. Given a representation $V$ of $G$, the group  $G$ then acts diagonally on   $V\otimes F(G)^H$ and:
    
\begin{lemma}\label{grco}
For every representation $V$ of $G$ we  have   the equality
\begin{equation}\label{UinvG}
V^H=(V\otimes F(G)^H)^G. 
\end{equation}

In fact this is given by restricting to $(V\otimes F(G)^H)^G$ the explicit map  $$\pi:  V\otimes F(G )\to V, \quad \pi:v\otimes f(g)\mapsto vf(1).$$

If $V$ is an algebra with $G$ action as automorphisms this identification is an isomorphism of algebras.
\end{lemma}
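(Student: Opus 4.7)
The plan is to identify $V \otimes F(G)$ with a suitable space of polynomial maps $G \to V$, then read off the invariants from that description. Concretely, I send $v \otimes f$ to the map $\Phi_{v\otimes f}:G\to V$ given by $\Phi_{v\otimes f}(x)=f(x)\,v$. Since $G$ is reductive and we are in characteristic $0$, every rational $G$-representation $V$ decomposes as a sum of finite-dimensional irreducibles, so this identification makes sense without any completion issues: it suffices to check everything on each isotypic component.

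Next I would compute the two invariance conditions. With the conventions $(g\cdot f)(x)=f(g^{-1}x)$ for the left action and $(f\cdot h)(x)=f(xh)$ for the right action, a direct calculation gives $\Phi_{g\cdot(v\otimes f)}(x)=g\,\Phi_{v\otimes f}(g^{-1}x)$ and $\Phi_{(v\otimes f)\cdot h}(x)=\Phi_{v\otimes f}(xh)$. Hence $V\otimes F(G)^H$ corresponds to maps $G\to V$ that factor through $G/H$, and the further condition of $G$-invariance for the diagonal action translates into $G$-equivariance, $\Phi(gx)=g\Phi(x)$. Thus $(V\otimes F(G)^H)^G$ is precisely the space of $G$-equivariant maps $G/H\to V$. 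Such a map $\Phi$ is determined by its value $\Phi(1)\in V$, and the requirement $\Phi(h)=h\Phi(1)=\Phi(1)$ for $h\in H$ says exactly $\Phi(1)\in V^H$; conversely each $v\in V^H$ yields the orbit map $g\mapsto gv$, which is well-defined on $G/H$ and $G$-equivariant.

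Now the map $\pi(v\otimes f)=v f(1)$ is simply $\Phi\mapsto \Phi(1)$ under our identification, so the two calculations above show that $\pi$ restricts to a linear isomorphism $(V\otimes F(G)^H)^G \xrightarrow{\sim} V^H$, with inverse $v\mapsto(g\mapsto gv)$. This is the content of Formula \eqref{UinvG}.

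Finally, when $V$ is an algebra and $G$ acts by automorphisms, both $V^H$ and $(V\otimes F(G)^H)^G$ are subalgebras (of $V$ and of the tensor product algebra $V\otimes F(G)^H$ respectively), and $\pi$ is manifestly multiplicative: $\pi\bigl((v\otimes f)(v'\otimes f')\bigr) = vv'\,f(1)f'(1) = \pi(v\otimes f)\,\pi(v'\otimes f')$. So the vector space isomorphism is automatically an algebra isomorphism. The only real subtlety in the whole argument is ensuring the identification $V\otimes F(G)\cong\{\text{polynomial maps }G\to V\}$ is literal for infinite-dimensional $V$; reductivity in characteristic $0$ handles this by reducing to isotypic components.
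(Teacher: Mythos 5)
Your proof is correct and follows essentially the same route as the paper: identify $V\otimes F(G)$ with polynomial maps $G\to V$, translate diagonal $G$-invariance into equivariance and right $H$-invariance into factoring through $G/H$, and evaluate at the identity, with multiplicativity of $\pi$ giving the algebra statement. The only difference is bookkeeping (you characterize $(V\otimes F(G)^H)^G$ directly as equivariant maps on $G/H$, while the paper first identifies $(V\otimes F(G))^G\cong V$ and then imposes the $H$-condition), which is immaterial.
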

\begin{proof}
The space of polynomial maps from $G$ to $V$  is clearly $ V\otimes F(G)$ where $v\otimes \phi$  corresponds to the map $g\mapsto \phi(g)v$.  We act on such maps with $G\times G$, the right action  is  $f^h(g):=f(gh)$ while for the left action we use  $^h\!f(g):=hf(h^{-1}g).$ These two actions commute, and the left action on maps is the tensor product of the action on $V$ and the left action on $F(G)$, that is if $f=v\otimes\phi$ we have $^hf=hv\otimes ^h\phi$. .

A map $f$ is $G$ equivariant, where on $G$ we use the left action if $f(hg)=h f(g)$, this means that $f$ is invariant under the left action on maps.

For such a map we have $f(g)=f(g1)=g f(1)$, conversely given any vector $v\in V$ the map $g\mapsto gv$ is $G$ equivariant so  we have a canonical identification $$j:(V\otimes F(G))^G\cong V, \ j(f):=f(1), \iff \ j:v\otimes \phi \mapsto  \phi(1)v.$$ 

 Moreover the map $j$ is $G$ equivariant if on $(V\otimes F(G))^G$ we use the right $G$ action, since for an equivariant map $f$ we have $f^h(g):=f(gh)$ (induced from right action), which maps to $f^h(1)=f(h)=h f(1).$

Under this identification, if  $a=\sum_jv_j\otimes \phi_j(g)\in V\otimes F(G)^H$  is invariant under right action by some subgroup $H$ it means that for the corresponding map $f:G\to V$ we have that $f(gh)=f(g),\ \forall h\in H$, thus $f(h)=f(1)$  and, if $f$ is $G$ equivariant we see that this is equivalent to the fact that    $\sum_jv_j\otimes \phi_j(1)\in V^H$.

As for the second statement it is enough to observe that $\pi$ is a homomorphism of algebras.
\end{proof}
This allows us to replace for $G$ modules, the invariant theory for $U$, which is not ruled by Hilbert's theory since $U$ is not reductive, with the one of the reductive group $G$, and obtain in this way the desired statements on finite generation. In fact a simple argument as in Hilbert theory shows the following.

Let $M$ be a module finitely generated over a  finitely generated commutative algebra $A$. Assume that a linearly reductive group $G$ acts on $M$, and on $A$ by automorphisms in a compatible way, that is $g(am)=g(a)g(m),\ a\in A,\ m\in M$  then
\begin{lemma}\label{fgenmo}
 The space of invariants $M^G$  is  finitely generated as a module over the finitely generated  algebra $A^G$.
\end{lemma}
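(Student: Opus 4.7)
The plan is to reduce the module statement to Hilbert's classical finiteness theorem for linearly reductive groups acting on a finitely generated commutative algebra. First, since $G$ is linearly reductive and acts rationally on the finitely generated commutative $F$-algebra $A$, Hilbert's theorem already gives that $A^G$ is a finitely generated $F$-algebra, so there is nothing to prove there.

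To handle $M$, I would choose finitely many $A$-generators $m_1,\ldots,m_n$ of $M$ and let $V\subset M$ be the (finite-dimensional) $G$-stable subspace they span; rationality of the $G$-action on $M$ guarantees $V$ is finite-dimensional. Then $V$ is a $G$-stable set of $A$-module generators for $M$, so the multiplication map $A\otimes_F V\to M$ is a surjective $G$-equivariant homomorphism of $A$-modules. By linear reductivity of $G$, taking $G$-invariants is an exact functor, so the induced map $(A\otimes_F V)^G\twoheadrightarrow M^G$ is still surjective. It therefore suffices to show that $(A\otimes_F V)^G$ is finitely generated over $A^G$.

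For this I would introduce the commutative $G$-algebra $B:=A\otimes_F\mathrm{Sym}(V)$ with diagonal $G$-action, graded by the $\mathrm{Sym}$-degree so that $B_0=A$ and $B_1=A\otimes_F V$. Since $B$ is a finitely generated commutative $F$-algebra on which $G$ acts rationally by graded automorphisms, Hilbert's theorem applied to $B$ produces a finite set of homogeneous generators $b_1,\ldots,b_s$ of $B^G$ (homogeneity is available because the $G$-action preserves the grading, hence $B^G$ is graded). Any element of the degree-$1$ piece $B^G_1=(A\otimes_F V)^G$, expressed as a polynomial in the $b_i$, can receive contributions only from monomials $b_{i_1}\cdots b_{i_k}$ whose degrees sum to $1$, that is, exactly one factor of degree $1$ and the remainder of degree $0$. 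Consequently $(A\otimes_F V)^G$ is generated as a module over $B^G_0=A^G$ by the finite set $\{b_i:\deg b_i=1\}$, and surjecting onto $M^G$ finishes the argument.

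The argument is essentially formal once Hilbert's theorem is granted; the only step calling for any care is the passage from ``$B^G$ finitely generated as $F$-algebra'' to ``$B^G_1$ finitely generated as $A^G$-module,'' which hinges on the possibility of choosing \emph{homogeneous} generators. I do not see any real obstacle; the whole point is the standard device of promoting the module $V$ to a polynomial ring $\mathrm{Sym}(V)$ so that Hilbert's algebra-level finiteness theorem can be applied directly.
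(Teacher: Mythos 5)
Your argument is correct, but it takes a genuinely different route from the paper's. The paper proves the lemma directly at the module level: since $A$ is a finitely generated commutative (hence Noetherian) algebra and $M$ is a finitely generated $A$-module, the submodule $AM^G$ is finitely generated and its generators $m_1,\ldots,m_k$ can be chosen inside $M^G$; then any $u\in M^G$ is written as $u=\sum_i a_i m_i$ and, because the map $A^k\to M$, $(a_i)\mapsto\sum_i a_i m_i$, is $G$-equivariant, applying the Reynolds operator replaces each $a_i$ by $R(a_i)\in A^G$, so the same $m_i$ generate $M^G$ over $A^G$. You instead enlarge a finite set of $A$-module generators to a finite-dimensional $G$-stable subspace $V$ (your phrase ``the $G$-stable subspace they span'' should be read as the $G$-stable subspace they \emph{generate}, i.e.\ the span of their $G$-translates, which is finite dimensional precisely by the rationality you invoke), use exactness of invariants for the linearly reductive $G$ to reduce to $(A\otimes_F V)^G$, and then apply Hilbert's finiteness theorem to the auxiliary graded algebra $A\otimes_F\mathrm{Sym}(V)$, extracting the degree-one part from homogeneous generators. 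Both proofs use linear reductivity in an essential way (you through exactness of the invariants functor, the paper through functoriality of the Reynolds operator). Your version is the classical ``module of covariants'' reduction: it packages everything into one application of Hilbert's algebra-level theorem, at the cost of introducing the symmetric algebra; the paper's argument is shorter, avoids the auxiliary algebra, and has the small bonus of exhibiting an explicit finite generating set of $M^G$, namely generators of $AM^G$ chosen in $M^G$.
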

\begin{proof}
Consider the $A$ submodule $AM^G$ of $M$. It is finitely generated by some elements $m_1,\ldots,m_k\in M^G$.

Thus if $u\in M^G$ we have $u=\sum_i a_i m_i,\ a_i\in A$. Now the map   $A^k\to M$  given by $\sum_i a_i m_i$ is $G$ equivariant so it commutes with the projection to the invariants  (in $A$ called the Reynolds operator $R$), so $u=\sum_i R(a_i) m_i,\ R(a_i)\in A^G$ hence $M^G$ is generated over $A^G$ by the elements $m_i$.
\end{proof}

At this point we only have to apply the theory of graded modules over graded algebras, here the grading is by the semigroup of dominant weights (which one can identify to $\mathbb N^k$) and use the  fact that the algebra  $F(G)^U$  is finitely generated by elements  which have as weight the fundamental weights. Thus if $V$ is a finitely generated module over the finitely generated algebra $A$ with $G$ action, we have that $V\otimes F(G)^U$  is a finitely generated module over the finitely generated algebra $A\otimes F(G)^U$ with diagonal $G$ action.
We deduce
\begin{theorem}\label{fingU}  $V^U$ is a finitely generated module over the finitely generated algebra $A^U$.

\end{theorem}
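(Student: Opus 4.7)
The plan is to reduce the problem about $U$-invariants, where $U$ is unipotent and hence not reductive, to a problem about $G$-invariants, where $G$ is reductive, by means of the isomorphism of Lemma \ref{grco}. The whole point is that although Hilbert's finiteness theorem does not apply directly to $U$, it does apply to $G$, and the algebra $F(G)^U$ serves as a finitely generated ``bridge'' between the two.

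First I would apply Lemma \ref{grco} twice (once for the module $V$ and once for the algebra $A$), both with $H=U$, to get canonical identifications
\begin{equation*}
V^U = (V \otimes F(G)^U)^G, \qquad A^U = (A \otimes F(G)^U)^G,
\end{equation*}
the second being an isomorphism of algebras by the final assertion of that lemma. The $G$-action on both sides is diagonal: on the tensor products $G$ acts simultaneously on the first factor (via its action on $V$ or $A$) and on the second factor (via the left regular representation restricted to $F(G)^U$). Because the identifications are compatible, $V^U$ acquires the structure of a module over $A^U$ in exactly the way induced by the inclusion $V \otimes F(G)^U \supset A \otimes F(G)^U$ acting on it.

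Next I would recall the classical fact, already stated in the paragraph preceding the theorem, that $F(G)^U$ is a finitely generated $F$-algebra, generated by the irreducible $G$-submodules corresponding to the fundamental weights. Therefore $A \otimes F(G)^U$ is a finitely generated commutative algebra (being the tensor product of two such), and $V \otimes F(G)^U$ is a finitely generated module over it (if $v_1,\dots,v_r$ generate $V$ over $A$, then $v_i\otimes 1$ generate $V\otimes F(G)^U$ over $A\otimes F(G)^U$). The $G$-action on this module is by automorphisms of the algebra in the required compatible sense, because the diagonal action respects all structures involved.

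At this point the hypotheses of Lemma \ref{fgenmo} are satisfied with $M = V\otimes F(G)^U$ and $A$ replaced by $A\otimes F(G)^U$, using that $G$ is linearly reductive in characteristic zero. Applying that lemma yields that $(V\otimes F(G)^U)^G$ is a finitely generated module over $(A\otimes F(G)^U)^G$, which by the identifications above is precisely the statement that $V^U$ is finitely generated over $A^U$. The only step requiring care, and the one I would regard as the main pedagogical obstacle rather than a mathematical one, is verifying that the module structure of $V^U$ over $A^U$ transported through Lemma \ref{grco} is indeed the natural one, so that the conclusion really answers the question asked; this is routine once one traces the explicit map $\pi$ of that lemma through the tensor product.
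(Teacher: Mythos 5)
Your proposal is correct and follows essentially the same route as the paper: identify $V^U=(V\otimes F(G)^U)^G$ and $A^U=(A\otimes F(G)^U)^G$ via Lemma \ref{grco}, use the finite generation of $F(G)^U$ so that $V\otimes F(G)^U$ is a finitely generated module over the finitely generated algebra $A\otimes F(G)^U$, and conclude with Lemma \ref{fgenmo} for the reductive group $G$. The only addition is your explicit check that the transported $A^U$-module structure on $V^U$ is the natural one, which the paper leaves implicit.
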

\begin{corollary}\label{raUU}  If $V$ and $A$ are as before, and $V=\oplus_\lambda m_\lambda S_\lambda(F^k)$, the generating function $\sum_\lambda m_\lambda t^\lambda$ is a rational function, in the variables $t_i:=t^{\omega_i}$ with denominator a product of $1-t^{\mu_i}=1-t^{\sum_im_i\omega_i}=1-\prod_it_i^{ m_i}$.

The    $\mu_i=\sum_im_i\omega_i,\ m_i\in\mathbb N$ are the dominant weights of some finite set of irreducible representations generating $A$ as algebra.

\end{corollary}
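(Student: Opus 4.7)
The plan is to translate the claim about $\sum_\lambda m_\lambda t^\lambda$ into a question about the multigraded Hilbert series of $V^U$, and then invoke the standard rationality theorem (Lemma \ref{grmvv}) for Hilbert series of finitely generated graded modules over finitely generated graded commutative algebras. Because $\dim S_\lambda(F^k)^U = 1$ with weight $\lambda$ under the diagonal torus, the decomposition $V = \oplus_\lambda S_\lambda(F^k)^{\oplus m_\lambda}$ induces a weight decomposition $V^U = \oplus_\lambda (V^U)_\lambda$ with $\dim (V^U)_\lambda = m_\lambda$; hence $\sum_\lambda m_\lambda t^\lambda$ is literally the multigraded Hilbert series of $V^U$ in the variables $t_j = t^{\omega_j}$, graded by the semigroup of dominant weights.

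By Theorem \ref{fingU}, $V^U$ is a finitely generated module over the finitely generated commutative algebra $A^U$. Since $A$ is a rational $G$-module and each nonzero $U$-invariant in $A$ is a highest weight vector of some $G$-irreducible subrepresentation, I would choose weight-homogeneous algebra generators $a_1,\ldots,a_N$ of $A^U$, with $a_i$ spanning the $U$-invariant line of some irreducible $G$-subrepresentation $S_{\mu_i}\subset A$ and $\mu_i$ a dominant weight. This produces a surjective multigraded ring map $F[x_1,\ldots,x_N] \twoheadrightarrow A^U$ with $\deg x_i = \mu_i$, turning $V^U$ into a finitely generated multigraded module over the polynomial ring $F[x_1,\ldots,x_N]$. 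The prime-filtration argument of Remark \ref{moddi}, combined with Lemma \ref{grmvv}, then yields
\[
H_{V^U} \;=\; \frac{p(t_1,\ldots,t_k)}{\prod_{i=1}^N (1 - t^{\mu_i})}
\]
with $p$ a polynomial; since $\mu_i = \sum_j m_{ij}\omega_j$ with $m_{ij} \in \mathbb{N}$, one has $t^{\mu_i} = \prod_j t_j^{m_{ij}}$, giving the claimed rational form in the $t_j$.

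Finally, to confirm that the $\mu_i$ are the highest weights of irreducibles generating $A$ as algebra, I would let $B \subseteq A$ be the $G$-stable subalgebra generated by $S_{\mu_1} \cup \cdots \cup S_{\mu_N}$, observe that $B^U \supseteq A^U$ by construction, and decompose $A = B \oplus C$ as $G$-modules using complete reducibility, which forces $C^U = 0$ and hence $C = 0$. The main technical hurdle is the passage from algebra generators of $A^U$ to a $G$-generating set of irreducibles inside $A$; both directions rely essentially on complete reducibility of rational $G$-modules in characteristic zero, the same ingredient that powered Theorem \ref{fingU}.
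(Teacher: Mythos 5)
Your proposal is correct and follows essentially the same route as the paper: you identify $\sum_\lambda m_\lambda t^\lambda$ with the dominant-weight-graded Hilbert series of $V^U$, use finite generation of $V^U$ over $A^U$ (Theorem \ref{fingU}, i.e.\ Lemma \ref{fgenmo} applied through the identification with $(V\otimes F(G)^U)^G$), and conclude with Lemma \ref{grmvv} after presenting $A^U$ as a quotient of a weighted polynomial ring. Your extra argument that weight-homogeneous generators of $A^U$ are highest weight vectors of irreducibles $S_{\mu_i}\subseteq A$ which generate $A$ as an algebra (complete reducibility, $A=B\oplus C$, $C^U=0\Rightarrow C=0$) merely makes explicit the step the paper leaves implicit in its closing remark about the torus action preserving weights.
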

\begin{proof} If  $V=\oplus_\lambda m_\lambda S_\lambda(F^k)$ we have $V^U=\oplus_\lambda m_\lambda S_\lambda(F^k)^U$, and  $S_\lambda(F^k)^U$ is 1--dimensional generated by a vector of weight $\lambda$ so  $\sum_\lambda m_\lambda t^\lambda$ is the Hilbert series of the graded module $V^U$  (graded by dominant weights). Then compute the generating function of $V^U$  using its identification with $(A\otimes F(G)^U)^G$ and then apply Lemma \ref{grmvv} and Lemma \ref{fgenmo}, we only  need to remark that the torus $T$ acts on $(A\otimes F(G)^U)^G$  by acting  on  $F(G)$ on the right and   under the identification the weight is preserved.

\end{proof}

In the algebras which we are studying and the various graded objects associated we have  the action of $Gl(k)$ which is in fact a polynomial action, that is no inverse of the determinant appears.

The action of the torus  of diagonal matrices $a_1,\ldots,a_k$  determines the weight decomposition so the multi--grading and the grading.

The fundamental weight $t_i:=\omega_i=a_1a_2\ldots a_i$  has degree $i$  so the ordinary Hilbert series of  $V^U$, $\sum_n\dim V^U_n$  is given by substituting $t_i:=t^{\omega_i}\mapsto t^i$ so that $t^{\sum_i m_i\omega_i}\mapsto t^{\sum_i m_i i} $. 

Notice that in degree  $n$  the dimension  of $V^U_n$ equals the length or number of  irreducible components in which $V_n$ decomposes.
\begin{corollary}\label{lengthes}
The length of  $V_n$ is a nice rational function with numerator  a polynomial in $\mathbb Z[t]$  and denominator a product as   factors of type $1-t^{h_i}$. 

\end{corollary}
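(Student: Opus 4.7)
The plan is to read off this statement almost directly from Corollary \ref{raUU} by passing from the multi-graded Hilbert series (in the variables $t_i = t^{\omega_i}$) to the ordinary Hilbert series (in the single variable $t$).

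First I would observe that the length of $V_n$ as a $GL(k)$-module, i.e.\ its number of irreducible constituents counted with multiplicity, equals $\dim V_n^U$. Indeed, writing $V_n = \oplus_\lambda S_\lambda(F^k)^{\oplus m_{n,\lambda}}$ and taking $U$-invariants, each $S_\lambda(F^k)^U$ is one-dimensional, so $\dim V_n^U = \sum_\lambda m_{n,\lambda}$, which is exactly the length. Hence the generating function of the lengths is the ordinary Hilbert series of the graded vector space $V^U$.

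Next I would invoke Corollary \ref{raUU} applied to the graded module $V$ over the graded algebra $A$ (with $GL(k)$-action): it gives that the multi-graded Hilbert series of $V^U$,
\[
\sum_\lambda m_\lambda\, t^\lambda \;=\; \frac{p(t_1,\ldots,t_k)}{\prod_i \bigl(1 - t^{\mu_i}\bigr)},
\]
is a rational function whose denominator is a product of factors $1 - t^{\mu_i}$, the $\mu_i$ being dominant weights of a finite set of generators of $A$. Since all weights appearing in a polynomial $GL(k)$-representation, as well as the generating weights $\mu_i$, lie in $\mathbb{N}^k$ (relative to the basis $\omega_1,\ldots,\omega_k$), the final clause of Lemma \ref{grmvv} ensures that $p(t_1,\ldots,t_k) \in \mathbb{Z}[t_1,\ldots,t_k]$ is an honest polynomial, not merely a Laurent polynomial.

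To finish, I would specialize $t_i := t^i$. As noted in the discussion preceding the corollary, the fundamental weight $\omega_i$ has ordinary degree $i$, so this substitution converts the multi-grading into the $GL(k)$-grading by total degree, turning the multi-graded Hilbert series of $V^U$ into its ordinary Hilbert series $\sum_n (\mathrm{length}\, V_n)\, t^n$. Under the substitution each factor $1 - t^{\mu_i} = 1 - \prod_j t_j^{m_{i,j}}$ becomes $1 - t^{h_i}$ with $h_i := \sum_j j\, m_{i,j} \in \mathbb{N}_{>0}$, and the numerator $p(t_1,\ldots,t_k)$ becomes a polynomial in $\mathbb{Z}[t]$. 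This exhibits the length generating function as a nice rational function in the sense of Definition \ref{nicer}. There is no real obstacle here; the only point requiring a little care is verifying that the substitution $t_i \mapsto t^i$ matches the ordinary grading, but this is exactly the content of the identification of $\omega_i$ with degree $i$ made in the paragraph right before the corollary.
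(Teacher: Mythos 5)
Your proof is correct and follows essentially the same route as the paper: the paper treats the corollary as an immediate consequence of Corollary \ref{raUU} together with the preceding observation that the length of $V_n$ is $\dim V_n^U$ and that the substitution $t_i\mapsto t^i$ (since $\omega_i$ has degree $i$) turns the multi-graded Hilbert series of $V^U$ into the ordinary one, sending each factor $1-t^{\mu_i}$ to $1-t^{h_i}$. Your extra remark that polynomiality of the numerator comes from the $\mathbb N^k$-grading (polynomial action) via the last clause of Lemma \ref{grmvv} is exactly the justification implicit in the paper.
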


\subsubsection{Cocharacters}
We want to apply the previous Theory to cocharacters.  Let $A$ be a PI algebra satisfying a Capelli identity (or rather a Capelli list)  $C_m$. By Kemer's theorem this is in fact PI equivalent to a  finite dimensional algebra.

We have then that the cocharacter $\chi_k=\sum_{\lambda\vdash k\,|\, ht(\lambda)<m}n_{k,\lambda}\chi_\lambda$ is a sum of irreducible characters   $\chi_\lambda$  associated to partitions with  height $<m$.

Consider the generating function $\sum_k(\sum_{\lambda\vdash k\,|\, ht(\lambda)<m}n_{\lambda}t^\lambda)$. We write   $t^\lambda=\prod_{i=1}^{m-1}t_i^{n_i}:=\underline t^{\underline n},\ \underline n:=(n_1,\ldots,n_{m-1})$  where $n_i$ equals the number of columns of $\lambda$ of length $i$.   

 \begin{theorem}\label{grmcoc} The generating function $\sum_{\lambda}n_{\lambda}t^\lambda$ of the multiplicities of the cocharacters is a nice rational function (\ref{nicer}) that is it has the form 
\begin{equation}\label{himul}
H_{co}  =\frac{p(t)}{\prod_{i=1}^m(1-\underline t^{\underline n_i})},\  p(t)\in \mathbb Z[t_1,\ldots, t_{ m- 1} ]. 
\end{equation} 

The generating  function of the colengths is also a nice rational function (\cite{berele2.2}).  \end{theorem}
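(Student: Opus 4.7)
The plan is to combine the canonical $T$-ideal filtration of Theorem \ref{finKs} with the $U$-invariant rationality statement of Corollary \ref{raUU}, applied layer by layer. First I would fix a finite set $X$ of $k \geq m$ variables so that, by Remark \ref{resva} and Proposition \ref{icocc}, the cocharacter multiplicities $n_\lambda$ coincide with the multiplicities of the $GL(k)$-irreducibles $S_\lambda(F^k)$ in the decomposition of the relatively free algebra $R = F\langle X\rangle/\mathrm{Id}(A)$. Then the generating function $\sum_\lambda n_\lambda \underline{t}^{\underline{n}}$ is exactly the Hilbert series of $R^U$ graded by dominant weights, where $U \subset GL(k)$ is the unipotent radical of the standard Borel.

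Next I would invoke Theorem \ref{finKs} to obtain a finite filtration $0 \subset K_0 \subset \ldots \subset K_u = R$ of $T$-ideals whose graded quotients $K_{i+1}/K_i$ are finitely generated modules over the finitely generated commutative algebras $\mathcal{T}_{\bar R_i}$. Because $T$-ideals are stable under arbitrary linear substitutions of variables, each $K_i$ (and hence each quotient) is $GL(k)$-stable, and the intrinsic nature of the trace construction (Theorem \ref{extac}) makes the $GL(k)$-action on $K_{i+1}/K_i$ compatible with the induced $GL(k)$-action on $\mathcal{T}_{\bar R_i}$. Corollary \ref{raUU} then applies to each pair and yields for $(K_{i+1}/K_i)^U$ a nice rational function
\[
H_i(\underline{t}) \;=\; \frac{p_i(\underline{t})}{\prod_j\bigl(1 - \underline{t}^{\underline{n}_{i,j}}\bigr)},
\]
whose denominator factors are indexed by the dominant weights of a finite set of generators of $\mathcal{T}_{\bar R_i}$.

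Since the multiplicities split additively along the filtration, $n_\lambda = \sum_i n_{i,\lambda}$, the full generating function $H_{co}$ is the finite sum $\sum_{i=0}^{u-1} H_i$, which upon placing over the common denominator $\prod_{i,j}\bigl(1-\underline{t}^{\underline{n}_{i,j}}\bigr)$ takes the form \eqref{himul}. For the colength claim I would specialize $t_i \mapsto t^i$: the exponent $\underline{t}^{\underline{n}}$ then collapses to $t^{|\lambda|}$, and each layer $H_i$ becomes a one-variable nice rational function by Corollary \ref{lengthes}; a finite sum of one-variable nice rational functions is again nice, giving the stated generating function of colengths.

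The only delicate verification is the compatibility of the $GL(k)$-action on the module $K_{i+1}/K_i$ with that on the coefficient ring $\mathcal{T}_{\bar R_i}$, which is what allows Corollary \ref{raUU} to be invoked verbatim. This rests on the intrinsic description of $\mathcal{T}_{\bar R_i}$ via Theorem \ref{extac} and its realization (from \S \ref{vase}) as the coordinate ring of a representation variety, on which $GL(k)$ acts naturally through its action on the $k$-tuples of generic elements; once this is recorded, the remainder of the argument is a compilation of the earlier rationality machinery.
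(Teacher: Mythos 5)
Your proposal is correct and follows essentially the same route as the paper: reduce to finitely many variables via Remark \ref{resva}/Proposition \ref{icocc}, identify the cocharacter generating function with the dominant-weight Hilbert series of the $U$-invariants, run the canonical filtration of Theorem \ref{finKs}, apply Corollary \ref{raUU} (via Theorem \ref{fingU}) to each layer $K_{i+1}/K_i$ as a finitely generated $GL(k)$-equivariant module over $\mathcal T_{\bar R_i}$, sum the finitely many contributions, and invoke Corollary \ref{lengthes} for the colength. Your explicit remark on the compatibility of the $GL(k)$-action with the trace ring (via Theorem \ref{extac}) only makes precise a hypothesis the paper's proof asserts without elaboration.
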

\begin{proof}
From what we have seen $n_\lambda$ is the multiplicity  of the irreducible representation $S_\lambda(X)$  in the relatively free algebra $\mathcal F_A(X)$ associated to $A$.  We know that this multiplicity equals the multiplicity of $S_\lambda(X)^U$ (Remark \ref{slx} and \ref{resva}) which equals $S_\lambda(\{x_1,\ldots,x_{m-1}\})^U$.  Now we have for the free algebra in $m-1$   variables the canonical filtration, where each $K_i/K_{i-1}$ is a $GL(k)$  module, so it has a generating series of cocharacters $H_{co} ^i$ and $H_{co} =\sum_iH_{co} ^i$. Furthermore $K_i/K_{i-1}$  satisfies the hypotheses of Theorem \ref{fingU} with $V=K_i/K_{i-1}$ and $A=\mathcal T_i$.
So for each $K_i/K_{i-1}$ we may apply corollary \ref{raUU}, giving a contribution  to Formula \eqref{himul} of the same type. \smallskip

For the colength we apply Corollary \ref{lengthes}.
\end{proof}

\begin{remark}\label{codm}  In principle the rational function describing the generating series of the cocharacters contains all the information  on the codimension.  The series of codimensions is obtained from the series of cocharacters by a formal linear substitutions  of a monomial $t^a$  by  $\chi_a(1)t^{|a|}$.    It may be worth of further investigation  the properties of this  linear map on the space of power series which are expressed by rational functions.

\end{remark}
 \subsection{Invariants of several copies of $V$}  In the next section we shall deduce some precise estimates on the dimension (cf. Definition \ref{nicer}) of the  rational functions expressing cocharaters and colength for a fundamental algebra. We need some general facts first.
 Let us ask the following question, let $V$  be a vector space of some dimension $k$ and $G$ a semisimple group  acting on $V$. The invariants of $m$ copies  $V^m$  under the action of $G$ are also a representation of  $GL(m,F)$,  in fact from Cauchy's Formula we have
 $$S[(V^*)^m]=\oplus_\lambda S_\lambda(V^*)\otimes S_\lambda(F^m) \implies  S[(V^*)^m]^G=\oplus_\lambda S_\lambda(V^*)^G\otimes S_\lambda(F^m) .$$  If we are interested in understanding  the multiplicity with which a given representation  $S_\lambda(F^m)$ appears we know that it equals
  $\dim  S_\lambda(V)^G$ provided that $m$ is larger than the height of $\lambda$  but it may appear only if the height of  $\lambda$ is $\leq \dim V$.  Thus this multiplicity stabilizes  for $m\geq \dim V$. On the other hand if $U$  is the unipotent group of   $SL(k,F)$  of strictly upper triangular matrices we have $$S[(V^*)^k]^{G\times U}=\oplus_\lambda S_\lambda(V^*)^G\otimes S_\lambda(F^k)^U,\ \dim S_\lambda(F^k)^U=1. $$Thus the generating function of these multiplicities is  the  generating function of  $S[(V^*)^m]^{G\times U}$.
  
  In particular for the growth we need to compute the dimension of the algebra   $S[(V^*)^m]^{G\times U}$. This we compute as follows, from the previous section we have that 
  \begin{equation}\label{uinv}
S[(V^*)^k]^{G\times U}=(S[(V^*)^k]\otimes F(SL(k))^U) ^{G\times SL(k) }. 
\end{equation}
  
\begin{theorem}\label{genfr} If $G$ acts faithfully on $V$ and 
$G\times {SL(k)}$ acts freely on a non empty  open set  of  the variety $V^k\times SL(k)/U$ the dimension of $S[(V^*)^k]^{G\times U}$ is    \begin{equation}\label{dqv}
 \frac{k^2+k }2 -\dim(G) .
\end{equation} 
\end{theorem}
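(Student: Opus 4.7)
The strategy is to turn Formula~\eqref{uinv} into a geometric identification and then compute the dimension of the resulting GIT quotient by a standard count.

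First I would recall that $F(SL(k))^U$ is the coordinate ring of the affine closure $\overline{SL(k)/U}$ of the base affine space for $SL(k)$; this is an irreducible affine variety, and since $U$ is unipotent and acts freely on $SL(k)$,
\[
\dim \overline{SL(k)/U}=\dim SL(k)-\dim U=(k^2-1)-\binom{k}{2}=\frac{k^2+k-2}{2}.
\]
Set $Y:=V^k\times \overline{SL(k)/U}$, an irreducible affine variety of dimension $k^2+\frac{k^2+k-2}{2}$ (using $\dim V=k$). Formula~\eqref{uinv} then identifies
\[
S[(V^*)^k]^{G\times U}=F[Y]^{G\times SL(k)},
\]
i.e.\ the coordinate ring of the categorical quotient $Y/\!/(G\times SL(k))$, and the group $G\times SL(k)$ is reductive because $G$ is semisimple.

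Next I would invoke the standard principle: for a reductive group $H$ acting on an irreducible affine variety $Y$, the Krull dimension of $F[Y]^H$ equals $\dim Y$ minus the dimension of a generic orbit. The hypothesis guarantees that $G\times SL(k)$ acts freely on a non--empty open subset of $V^k\times SL(k)/U$; since $SL(k)/U$ is open and dense in its affine closure, freeness transfers to an open subset of $Y$, so a generic orbit has the maximal possible dimension $\dim(G\times SL(k))=\dim G+k^2-1$. Subtracting gives
\[
\dim F[Y]^{G\times SL(k)}=k^2+\frac{k^2+k-2}{2}-\dim G-(k^2-1)=\frac{k^2+k}{2}-\dim G,
\]
which is the claimed Formula~\eqref{dqv}.

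The only point that requires care is the passage from the open free locus on $V^k\times SL(k)/U$ to the generic orbit dimension on the affine closure $Y$, combined with the equality $\dim F[Y]^{G\times SL(k)}=\dim Y-\dim(G\times SL(k))$. Both are standard consequences of Rosenlicht's theorem (the transcendence degree of the field of rational invariants equals $\dim Y$ minus the generic orbit dimension) together with the fact that, for a reductive group acting on an irreducible affine variety, the field of fractions of the invariant ring is the field of rational invariants. With these granted, the proof reduces to the dimension arithmetic above.
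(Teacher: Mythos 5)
The dimension arithmetic in your proposal is the same as the paper's, but the justification of the crucial step is not, and it contains a genuine gap. You reduce everything to the principle that for a reductive group $H$ acting on an irreducible affine variety $Y$ one has $\dim F[Y]^H=\dim Y-\dim(\text{generic orbit})$, and you support this by claiming that the field of fractions of $F[Y]^H$ is the field of rational invariants. That claim is false in general: for $\mathbb G_m$ scaling $\mathbb A^2$ the regular invariants are the constants while the rational invariants have transcendence degree $1$; correspondingly $\dim F[Y]^H$ can be strictly smaller than $\dim Y$ minus the generic orbit dimension. What is true unconditionally is only the inequality $\dim F[Y]^H\leq \dim Y-\dim(\text{generic orbit})$, since generic fibers of the quotient map contain orbits of maximal dimension; the content of the theorem is the reverse inequality, i.e.\ that there are \emph{enough} regular invariants, equivalently that the generic fiber of $Y\to Y/\!/H$ is a single (closed) orbit. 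Freeness of the action on a dense open set does not by itself give this: one needs the action to be \emph{stable} (generic orbits closed), and stability can fail even for semisimple groups once one leaves the factorial setting.

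This is exactly the point the paper's proof addresses and your proposal skips. The paper first shows that $Y=V^k\times W$, with $W$ the variety with coordinate ring $F(SL(k))^U$, is factorial: by a theorem of Popov the coordinate ring of the simply connected semisimple group $SL(k)$ is factorial, and since $U$ is connected and unipotent the subring $F(SL(k))^U$ is again factorial. It then invokes Popov's stability criterion: a semisimple group acting on a factorial irreducible affine variety with finite (here trivial) generic stabilizer has closed generic orbits. Only then does the generic orbit coincide with the generic fiber, and Hilbert's theory gives $\dim F[Y]^{G\times SL(k)}=\dim Y-\dim(G\times SL(k))$, which yields \eqref{dqv}. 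To repair your argument you must insert this factoriality-plus-stability step (or an equivalent argument showing generic fibers are orbits); Rosenlicht's theorem alone computes the rational invariant field, not the ring of regular invariants that the statement is about.
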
\begin{proof} From Formula \eqref{uinv} this dimension is the dimension of the quotient variety of $G\times SL(k)$ acting on the variety $V^k\times W$, where $W$ is the variety  of coordinate ring  $F(SL(k))^U$  which contains  $SL(k)/U$ as dense open set so its dimension is $ (k^2-1)-\frac{k^2-k}2= \frac{k^2+k-2}2  $.

 The group $SL(k,F)$ is semisimple and simply connected, so by a Theorem of Popov (cf. \cite{Popov} Corollary of Proposition 1) the coordinate ring of  $SL(k,F)$ is factorial, then, since $U$ is a connected group, also the ring $F(SL(k))^U$  is factorial hence the variety $V^k\times W$ is factorial.\smallskip

%

For a semisimple group $H$ acting on an irreducible affine variety $X$ which is also factorial and with the generic orbit equal to $H$ or just with finite stabilizer one knows, by another Theorem of Popov, cf.  \cite{Popov0},  that the generic orbit is closed  so equals the generic fiber and, by Hilbert's theory, the quotient variety has dimension $\dim W-\dim H $.

These hypotheses are satisfied and we have in our case $\dim W= k^2+\frac{k^2+k-2}2$ and since $H=G\times SL(k)$ we have $\dim H=\dim G+k^2-1$. The formula follows.\end{proof}

  From this Formula we see that only for somewhat small $G$ we may have this strong condition.   It is then useful the following criterion.
\begin{proposition}\label{gensta} If $G$ acts faithfully on a space $V$ of dimension $k$ and  the generic stabilizer   of the action of  $G$ on $V$ is a torus, $G\times {SL(k)}$ acts freely on a non empty  open set  of $V^k\times SL(k)/U$ and $G\times U$ acts freely on a non empty  open set  of $V^k$. 
\end{proposition}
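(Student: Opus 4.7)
My plan is to prove the two assertions by showing they are equivalent, then verifying the second directly by an explicit stabilizer computation.

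Equivalence of the two statements. Choose a basis of $V$ to identify $V^k\cong M_k(F)$, with $G\subset GL(k)$ acting by left multiplication and $SL(k)$ by $A\cdot M=MA^{-1}$ on the right. One checks that the stabilizer of an invertible $M$ in $G\times U$ is in bijection with $G\cap MUM^{-1}$, while the stabilizer of $(M,\bar h)$ in $G\times SL(k)$ is in bijection with $G\cap NUN^{-1}$ where $N=Mh$. As $(M,\bar h)$ varies generically, $N$ ranges over a dense open subset of $GL(k)$, so both statements reduce to the single claim: $G\cap NUN^{-1}=\{1\}$ for generic $N\in GL(k)$. It therefore suffices to prove the second.

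Direct stabilizer analysis. For the second statement I compute the stabilizer of a generic $(v_1,\ldots,v_k)\in V^k$ under the action
\[ ((g,u)\cdot v)_i \;=\; g v_i + \sum_{j>i} u_{ij}\, g v_j, \]
reading the equations $(g,u)\cdot v=v$ in descending order of $i$. At $i=k$ the equation is $gv_k=v_k$, so by hypothesis $g$ lies in $T^{v_k}:=\mathrm{Stab}_G(v_k)$, a torus conjugate to $T_0$. At $i=k-1$ the equation reads $gv_{k-1}+u_{k-1,k}v_k=v_{k-1}$. Decomposing $V$ into $T^{v_k}$-weight spaces $V=V_0\oplus\bigoplus_{\chi\neq 0}V_\chi$, and noting that $v_k\in V_0$ while a generic $v_{k-1}$ has non-zero component in each $V_\chi$, the $V_0$-part of the equation forces $u_{k-1,k}v_k=0$ (hence $u_{k-1,k}=0$), and the $V_\chi$-parts force $\chi(g)=1$ for every non-trivial weight $\chi$. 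Since $G$ (and hence $T^{v_k}$) acts faithfully on $V$, the joint kernel $\bigcap_{\chi\neq 0}\ker(\chi)$ is trivial in $T^{v_k}$, so $g=1$.

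With $g=1$ fixed, the equations at $i=k-2,\ldots,1$ collapse to $\sum_{j>i}u_{ij}v_j=0$. For a generic $k$-tuple the subfamily $(v_{i+1},\ldots,v_k)$ is linearly independent (an open condition since $\dim V=k$), forcing all $u_{ij}=0$. Iterating down to $i=1$ yields $u=1$, so the $(G\times U)$-stabilizer is trivial on the non-empty Zariski open intersection of the finitely many genericity conditions used. The first statement follows from the equivalence.

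The main obstacle is the analysis at $i=k-1$, which is where the two hypotheses must collaborate. It is essential that the generic stabilizer is a \emph{torus}, so that the weight decomposition of $V$ is available and cleanly separates the $u$-correction (which lands in $V_0$) from the $g$-correction (which acts on the non-trivial weight spaces); and the faithfulness of $G$ on $V$ is what forces the joint kernel of the non-trivial weights of $T^{v_k}$ to be the trivial subgroup rather than merely a finite one. Weakening either hypothesis—for instance, allowing a generic stabilizer with non-trivial unipotent radical, or a non-faithful $G$-action—would at best yield a finite generic stabilizer and only freeness up to a finite kernel.
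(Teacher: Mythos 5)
Your proof is correct and follows essentially the same route as the paper's: reduce the first statement to triviality of the generic $(G\times U)$-stabilizer on $V^k$, then use the triangular stabilizer equations together with the torus hypothesis and faithfulness to force $g=1$, and linear independence of generic vectors to force $u=1$. The only cosmetic differences are your explicit $G\cap NUN^{-1}$ reformulation of the reduction and your use of the weight decomposition under the whole stabilizer torus $\mathrm{Stab}_G(v_k)$, where the paper decomposes $V$ into the fixed space and a stable complement of the single semisimple element $g$.
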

\begin{proof}
 In fact let us look at  the stabilizer of   $((v_1,\ldots, v_k),U)$  it is the subgroup of  $G\times U$ stabilizing $(v_1,\ldots, v_k)$. Then, for a generic choice of $(v_1,\ldots, v_k)$ this  is the stabilizer of  a generic orbit of $G\times U$  on $V^k$ so it is enough to show that  this is trivial.   

The action  of $(g,u)$ on a vector  $(v_1,\ldots, v_k)$  is the action of $u$ on $(gv_1,\ldots, gv_k)$. Moreover $u=1+\Lambda$ is some triangular matrix      with $\lambda_{j,i}\neq 0, \implies j<i$ so finally.
\begin{equation}\label{lacc}
(g,u) (v_1,\ldots, v_k)= (w_1,\ldots, w_k),\quad w_i=gv_i+\sum_{j<i}\lambda_{j,i}gv_j.
\end{equation}
Hence if $(g,u)$   stabilizes the vector $(v_1,\ldots, v_k)$ we must have $v_i=w_i\,\ \forall i$. 

            In particular $v_1=gv_1, v_2=gv_2+\lambda v_1$. So since $v_1$ is generic $g$ is a semisimple element being in a torus. Decompose $V=V^g\oplus V_g$  the invariants and a stable complement, write each $v_i=a_i+b_i,\ a_i\in V^g,\ b_i\in  V_g$.

Consider thus $a_2+b_2=v_2= gv_2+\lambda v_1=a_2+gb_2+\lambda a_1$, this implies 
 that  $b_2-gb_2=\lambda a_1\in V_g\cap V^g=\{0\}$ implies $b_2-gb_2=0$ but this implies $b_2\in V^g$  hence $b_2=0$.  Since $b_2$ is generic in $V_g$ this  implies $V_g=0$  or  $g=1$. Thus form Formula \eqref{lacc} we deduce $\sum_{j<i}\lambda_{j,i} v_j=0,\ \forall i.$
 
 Then since  $(v_1,\ldots, v_k)$ are generic they are linearly independent and this  implies $\lambda_{j,i}=0$ and also $u=1$.
\end{proof}
 
\subsubsection{Colength}
We want to investigate now the colength of $R=\oplus_{n=0}^\infty R_n$ where $R$ is the relatively free algebra, in $k$ variables, of some  finite dimensional algebra. By definition the colength is the function $\ell(R_n)$ of $n$ which measures the number of irreducible representations of  $GL(k,F)$  decomposing the part $R_n$ of degree $n$. If $k$ is larger than the  degree $m$  of a Capelli list satisfied by $A$ we also know that the colength stabilizes.

Let $S_A =\otimes_{i=1}^q\mathcal T_{n_i}(m)$ be  the ring of invariants   of $m$ copies of  a semisimple algebra $ A =\oplus_{i=1}^qM_{n_i}(F)$  under its automorphism group $G=\prod_i PSL(n_i)$. 
   \smallskip

 The algebra $S_A^U $ comes from Formula \eqref{uinv}  for $V= A =\oplus_{i=1}^qM_{n_i}(F)$ under the group $G=\prod_i PSL(n_i)$ which acts faithfully on $\bar A$ and it is semisimple.  Moreover the generic element  of $ A$  is a  list of matrices each with distinct eigenvalues so that the stabilizer is a product of maximal tori. We thus  have verified all the properties of  Proposition \ref{gensta}  we can thus apply Theorem  \ref{genfr}   and then the formula for the dimension of   $S_A^U $ is given by Formula  \eqref{dqv}   where $k=\dim   A=t=\sum_in_i^2$  while $\dim G= \sum_{i=1}^q(n_i^2-1)=t-q.$

We finally get for the quotient the dimension
\begin{proposition}\label{dimRa} If $A=\oplus_{i=1}^qM_{n_i}(F),\ G=\prod_i PSL(n_i)$   the dimension of the algebra $S_A^U $ of Formula \eqref{uinv} equals 
$  \frac{t^2-t}2+q. $
\end{proposition}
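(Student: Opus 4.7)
The plan is to deduce Proposition \ref{dimRa} as a direct consequence of Theorem \ref{genfr}, with the hypotheses of that theorem verified via Proposition \ref{gensta}. So the work is essentially (a) checking that the action of $G=\prod_i PSL(n_i)$ on $V=\bar A=\oplus_i M_{n_i}(F)$ satisfies the generic-torus-stabilizer assumption, and (b) carrying out the arithmetic $\frac{k^2+k}{2}-\dim G$ with $k=t$ and $\dim G=t-q$.

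First, I would note that $G=\prod_i PSL(n_i)$ acts on $V=\oplus_i M_{n_i}(F)$ by conjugation on each simple factor; since each $PSL(n_i)$ acts faithfully on $M_{n_i}(F)$ by conjugation, the product acts faithfully on the direct sum. Then I would describe the generic element: a tuple $(x_1,\dots,x_q)\in\oplus_i M_{n_i}(F)$ where each $x_i$ has $n_i$ distinct eigenvalues. The stabilizer in $PSL(n_i)$ of such an $x_i$ is the image of the maximal torus diagonalizing $x_i$, so the generic stabilizer in $G$ is the product of these maximal tori, a torus of dimension $\sum_i(n_i-1)$. This verifies the hypothesis of Proposition \ref{gensta}, so that proposition yields the free-action hypothesis of Theorem \ref{genfr} for the pair $(G,V)$ with $k=\dim V=t=\sum_i n_i^2$.

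Next I would apply Theorem \ref{genfr} directly. It gives
\[
\dim S_A^U \;=\; \frac{k^2+k}{2}-\dim G.
\]
Here $k=t=\sum_i n_i^2$, while $\dim G=\sum_i \dim PSL(n_i)=\sum_i(n_i^2-1)=t-q$. Substituting,
\[
\dim S_A^U \;=\; \frac{t^2+t}{2}-(t-q)\;=\;\frac{t^2+t-2t+2q}{2}\;=\;\frac{t^2-t}{2}+q,
\]
which is the claimed formula.

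I do not expect any real obstacle; the only thing to be careful about is the identification of $S_A^U$ as the $G\times U$--invariants of the symmetric algebra on $V^m$ for $m=k=t$ (this is exactly Formula \eqref{uinv}, applied to $V=\bar A$), together with the fact that $\bar A$ is self-dual as a $G$-representation, so one may freely pass between $V$ and $V^*$. Once these identifications are in place, the proposition is just the application of Theorem \ref{genfr} to the data at hand.
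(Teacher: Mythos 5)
Your proposal is correct and follows essentially the same route as the paper: the paper likewise observes that $G=\prod_i PSL(n_i)$ acts faithfully on $\bar A$ by conjugation and that a generic element is a tuple of matrices with distinct eigenvalues, so the generic stabilizer is a product of maximal tori, verifying Proposition \ref{gensta} and allowing Theorem \ref{genfr} to be applied with $k=t$ and $\dim G=t-q$. The concluding arithmetic $\frac{t^2+t}{2}-(t-q)=\frac{t^2-t}{2}+q$ is exactly the paper's computation.
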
\begin{proof}
Here we apply Formula \eqref{dqv} with  $k=t$ and $\dim G= t-q$.
\end{proof}
We  now want to apply this to $\mathcal F_A$   the relatively free algebra of a fundamental algebra  $A$ and the  trace ring $\mathcal T_A$.

\begin{proposition}\label{colfa} If $\mathcal F_A$ is the relatively free algebra of a fundamental algebra  $A$ with $\bar A=\oplus_{i=1}^q M_{n_i}(F)$ the dimension of the rational function of colengths is
$$\frac{t^2-t}2+q,\ t=\sum_{i=1}^q  n_i^2. $$

\end{proposition}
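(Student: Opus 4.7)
The plan is to identify the dimension of the colength generating function with the Krull dimension of the commutative ring $\mathcal T_A^U$, and then transfer this via the integral extension $\mathcal T_A \subset S_A$ from Lemma \ref{intre} to the value computed in Proposition \ref{dimRa}. By the observation preceding Corollary \ref{lengthes}, the $n$th colength of $\mathcal F_A$ equals $\dim(\mathcal F_A)^U_n$, where $U$ is the unipotent radical acting on the generic variables, so what must be computed is the pole order at $t=1$ of the Hilbert series of the graded vector space $(\mathcal F_A)^U$.

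For the upper bound, Theorem \ref{trcealg} gives that $\mathcal T_A\mathcal F_A$ is a finitely generated module over the commutative $GL(m)$-algebra $\mathcal T_A$, so Theorem \ref{fingU} yields that $(\mathcal T_A\mathcal F_A)^U$ is a finitely generated module over $\mathcal T_A^U$, whose Hilbert series is therefore a nice rational function of dimension at most $\dim \mathcal T_A^U$. Since $\mathcal F_A \hookrightarrow \mathcal T_A\mathcal F_A$ is $GL(m)$-equivariant and $U$-invariants is an exact functor on rational $GL(m)$-modules in characteristic zero, one has $\dim(\mathcal F_A)^U_n \leq \dim(\mathcal T_A\mathcal F_A)^U_n$ termwise, giving the desired upper bound. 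For the lower bound I would use the $\mathcal T_A$-submodule $K_A \subset \mathcal F_A$ of Kemer polynomials (Corollary \ref{kapo0}): by Lemma \ref{coorre}, $\mathcal T_A$ is the coordinate ring of the irreducible variety $W_{n_1,\ldots,n_q}$, hence a domain, and $K_A$ embeds torsion-freely into $\bar A\otimes_F L$, so $K_A$ is a nonzero torsion-free $\mathcal T_A$-module of full support on $\mathrm{Spec}(\mathcal T_A)$. Translating via $K_A^U = (K_A \otimes F(G)^U)^G$ from Lemma \ref{grco} with $G$ reductive, the tensor product has full support on $\mathrm{Spec}(\mathcal T_A \otimes F(G)^U)$, and taking $G$-invariants of a coherent equivariant sheaf with full support preserves full support on the reductive quotient $\mathrm{Spec}(\mathcal T_A^U)$. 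Hence the Hilbert series of $K_A^U \subset (\mathcal F_A)^U$ already has dimension $\dim \mathcal T_A^U$, yielding the lower bound.

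Combining the two bounds, the colength generating function has dimension exactly $\dim \mathcal T_A^U$. To conclude I would show $\dim\mathcal T_A^U = \dim S_A^U$ using that $S_A = \otimes_{i=1}^q \mathcal T_{n_i}(m)$ is integral over $\mathcal T_A$ (Lemma \ref{intre}); this integrality is preserved under tensoring with $F(G)^U$, and for $y \in S_A^U$ the Reynolds operator $R_G$ of the reductive group $G$ applied to the coefficients of an integral equation $y^d+\sum_i c_iy^{d-i}=0$ produces an integral equation with coefficients in $\mathcal T_A^U$, since $y$ being $G$-invariant lets $R_G$ commute with multiplication by $y$. Hence $S_A^U$ is integral over $\mathcal T_A^U$ and they share a Krull dimension, which by Proposition \ref{dimRa} equals $\frac{t^2-t}{2}+q$. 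The main obstacle is the support-preservation claim in the lower bound: justifying that passage to $U$-invariants does not shrink the Krull-dimension support of $K_A$, for which the reductive translation $V^U=(V\otimes F(G)^U)^G$ is essential.
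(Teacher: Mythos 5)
Your overall route is the paper's: sandwich $K_A\subset \mathcal F_A\subset \mathcal T_A\mathcal F_A$, read the colength as the Hilbert series of $U$--invariants, use Lemma \ref{intre} plus Theorem \ref{fingU} to identify $\dim \mathcal T_A^U=\dim S_{\bar A}^U$, and quote Proposition \ref{dimRa} for the value $\frac{t^2-t}2+q$. The upper bound via $(\mathcal F_A)^U\subset(\mathcal T_A\mathcal F_A)^U$ and your Reynolds-operator derivation of integrality of $S_{\bar A}^U$ over $\mathcal T_A^U$ are both sound; the latter is essentially the proof of Lemma \ref{fgenmo}, which is how the paper gets the same conclusion.

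The step that does not stand as written is your lower bound. The principle you invoke --- that taking invariants under a reductive group of an equivariant coherent module with full support yields a module of full support over the invariant ring --- is false in general: already for $G$ acting trivially on the base ring $R$ and $M=R$ with equivariant structure twisted by a nontrivial character one has $M^G=0$. You rightly flag this as the main obstacle, and it must be replaced; fortunately the repair is short and is what the paper's terse remark about $K_A$ being a common ideal amounts to. Since $K_A$ is a nonzero $GL(m)$--stable graded subspace, it contains a nonzero highest weight vector $k\in K_A^U$. The trace ring $\mathcal T_A$ is a domain (coordinate ring of the irreducible variety $W_{n_1,\ldots,n_q}$, Lemma \ref{coorre}) and $K_A$ is torsion free over it, being contained in the $L$--vector space $A\otimes_F L$; moreover $\mathcal T_A^U K_A^U\subset K_A^U$ because the $GL(m)$--action is by algebra automorphisms compatible with the $\mathcal T_A$--module structure of Corollary \ref{kapo0}. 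Hence $a\mapsto ak$ embeds $\mathcal T_A^U$ into $K_A^U$ (up to a degree shift), so the Hilbert series of $(\mathcal F_A)^U\supset K_A^U$ has pole order at $t=1$ at least $\dim\mathcal T_A^U$, which is the lower bound you wanted; no appeal to support preservation under invariants (nor to Lemma \ref{grco} at this point) is needed. With this substitution your argument is complete and coincides in substance with the paper's proof.
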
\begin{proof} By lemma \ref{intre}     the ring of invariants  $S_{\bar A} $ is a finite (torsion free)  module over the trace ring $\mathcal T_{A}$ so, by Theorem \ref{fingU},  $S_{\bar A} ^U $  a finite (torsion free) module over $\mathcal T_{A}^U$ and hence the dimension of $\mathcal T_{A}^U $ equals that of $S_A^U.$
The proposed dimension is the dimension of the colength function of  $S_A$ hence also of $\mathcal T_A \mathcal F_A$, which is a finite torsion free module over $\mathcal T_A$. But also $\mathcal F_A$ contains an ideal $K_A$ which is an ideal for $\mathcal T_A \mathcal F_A$ hence it has the same dimension.

\end{proof} 
For a general relatively free algebra $R$,
 using the standard filtration we have that  the colength of $R$ is the sum of the colengths of  the factors $K_{i+1}/K_i$ and we have seen that  the generating function $H_{\ell(R)}:=\sum_{n=0}^\infty \ell(R_n)t^n$ is a nice   rational function with denominator a product of factors $1-t^{h_i}$.
So, for large $n$  the colength $\ell(R_n)$ is a quasi--polynomial of some degree $d-1$ where $d$ is the order of the pole of $H_{\ell(R)}$ at $t=1$.

Each one of these factors $K_{i+1}/K_i$ is a finite  module over some finitely generated algebra $T_{\bar R_i}\subset \oplus_j \mathcal T_{A_j}$, where $\bar R_i$ quotient of $R_i$ is the relatively free algebra associated to an algebra $A=\oplus_jA_j$, direct sum of fundamental algebras $A_j$ all with the same Kemer index, the one of $R_i$.   $T_{\bar R_i}\subset \oplus_j \mathcal T_{A_j}$ is the coordinate ring of some union of the varieties $W_j$ associated to the fundamental algebras $A_j$.  By Theorem \ref{finKs} Moreover $K_{i+1}/K_i\subset \oplus_j(K_{i+1}/K_i)_j$, and  each $(K_{i+1}/K_i)_j$ is torsion free over the corresponding  $\mathcal T_{A_j}$.  We claim that the  number $d$  is also the maximum of the order of the pole of the Hilbert series of the colength  of $T_{A_j}$ hence the dimension of $T_{A_j}^U$.  
Let us summarize these results for $R(m)$ a relatively free in $m$ variables satisfying some Capelli list $\mathcal C_{k+1}$.
 Denote by $\bar R_i$  the quotients of the standard filtration

 \begin{theorem}\label{maxcK} The generating function of the colength of $R(m)$ is a nice rational function which stabilizes for $m\geq k$.\smallskip

 When $m\geq k$   this  rational   function has dimension  $\max(\frac{t_i^2-t_i}2+q_i) $  where $t_i$ is the first Kemer index of $\bar R_i$  while $q_i$  is its    $q$ invariant (Definition \ref{qinv}).
\end{theorem}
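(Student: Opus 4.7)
The plan is to assemble the proof from three ingredients already established in the paper: the canonical filtration of Theorem \ref{finKs}, the stabilization of cocharacter multiplicities from Proposition \ref{icocc} together with Remark \ref{resva}, and the precise dimension computation for fundamental algebras in Proposition \ref{colfa}. Throughout I will work with the generating function $H_{\ell(R(m))}(t) = \sum_n \ell(R(m)_n)t^n$ and track its pole at $t=1$.

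\textbf{Stabilization and rationality.} First I would explain why the colength stabilizes. By Proposition \ref{icocc} and Remark \ref{resva}, as soon as $m\geq k$ the multiplicity $m_\lambda$ of each Schur functor $S_\lambda(F^m)$ in $R(m)_n$ equals the cocharacter multiplicity of $\chi_\lambda$ (which is supported on partitions of height $\leq k$), and hence $\ell(R(m)_n)$, being the number of $\lambda\vdash n$ with $m_\lambda\neq 0$, is independent of $m$ for $m\geq k$. For rationality, apply the filtration $0=K_{-1}\subset K_0\subset\cdots\subset K_u=R(m)$ from Theorem \ref{finKs}. Each factor $K_{i+1}/K_i$ is a $GL(m)$-stable finitely generated module over the finitely generated commutative algebra $\mathcal T_{\bar R_i}$, so Corollary \ref{lengthes} (obtained from Theorem \ref{fingU} and Corollary \ref{raUU}) applies to each factor and gives a nice rational colength series. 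Summing a finite number of nice rational functions remains nice, so $H_{\ell(R(m))}$ is a nice rational function.

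\textbf{Reduction of the dimension to factors and to $T_{A_j}^U$.} The dimension of a finite sum of nice rational functions equals the maximum of the orders of the poles at $t=1$, so $\dim H_{\ell(R(m))} = \max_i \dim H_{\ell(K_{i+1}/K_i)}$. For a fixed $i$, choose (Theorem \ref{finKs}) an algebra $A = \oplus_j A_j$, direct sum of fundamental algebras all with the same Kemer index equal to that of $R_i$, so that $\bar R_i$ embeds in $\oplus_j \mathcal F_{A_j}$ and $K_{i+1}/K_i$ embeds into $\oplus_j (K_{i+1}/K_i)_j$ with each summand torsion free over the corresponding $\mathcal T_{A_j}$. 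Since torsion-free finite modules over a domain have Hilbert-series pole of the same order as the domain (Remark \ref{moddi}), the pole order of $H_{\ell((K_{i+1}/K_i)_j)}$ equals that of the colength series of $\mathcal T_{A_j}$, which by Proposition \ref{colfa} equals $\frac{t_j^2-t_j}{2} + q_j$ where $t_j,q_j$ are the first Kemer index and $q$-invariant of $A_j$; but since all $A_j$ appearing here share the Kemer index and, more importantly, the $q$-invariant of $\bar R_i$ is by Definition \ref{qinv} the maximal $q_j$ among those summands with maximal first Kemer index, we obtain $\dim H_{\ell(K_{i+1}/K_i)} = \frac{t_i^2-t_i}{2} + q_i$.

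\textbf{Assembling the maximum.} Taking the maximum over $i$ of the above, and using that the sum of the nice rational functions has dimension equal to the maximum of the dimensions of the summands, yields $\dim H_{\ell(R(m))} = \max_i\left(\frac{t_i^2-t_i}{2}+q_i\right)$, which completes the proof.

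\textbf{Main obstacle.} The cleanest step is the appeal to Proposition \ref{colfa}; the delicate step is the second one, namely showing that the pole order of the colength series of $K_{i+1}/K_i$ is unchanged under the embedding into $\oplus_j(K_{i+1}/K_i)_j$ and under passing from this direct sum to the individual trace rings $\mathcal T_{A_j}$. One has to argue carefully that the $U$-invariants behave well under these reductions (using Theorem \ref{fingU} to keep finite generation over a $U$-invariant commutative subalgebra) and that the correct combinatorial invariant extracted is indeed the $q$-invariant of $\bar R_i$ as defined in \ref{qinv}, rather than some smaller number arising from a non-maximal summand.
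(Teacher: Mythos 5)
Your proposal is correct and follows essentially the same route as the paper: rationality and stabilization via the canonical filtration together with Proposition \ref{icocc} and Corollary \ref{lengthes}, the max-of-dimensions reduction for sums of nice rational functions with positive coefficients (Proposition \ref{lafck}), and the dimension of each factor computed by embedding $K_{i+1}/K_i$ into $\oplus_j(K_{i+1}/K_i)_j$, using torsion-freeness over the $\mathcal T_{A_j}$ and Proposition \ref{colfa}, exactly as in the discussion preceding Theorem \ref{maxcK} and the argument modeled on Theorem \ref{dimRRR}. The delicate point you flag (that the pole order of the colength series is preserved under these reductions, via $U$-invariants and Theorem \ref{fingU}) is the same claim the paper asserts without further elaboration.
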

\begin{proof}
The only thing which requires some proof is the dimension.   This computation depends on the following fact  the dimension of a nice rational   function  associated to a generating sequence $\sum_i c_it^i, c_i>0$  is the order of the pole at $t=1$. Hence one has easily, from Proposition \ref{lafck}, that the dimension of the sum of several   nice rational   functions associated to   generating sequences $\sum_i c_it^i, c_i>0$  equals the maximum of these dimensions. In our case one has  to compute the maximum arising  from the colength in the standard filtration and finally the argument is, using the previous discussion for fundamental algebras, like the argument of Theorem \ref{dimRRR}.
\end{proof}
\section{Model algebras\label{modA}}\subsection{The canonical model of fundamental algebras\label{model}}
We want to discuss now the problem of choosing special fundamental algebras in the PI equivalence classes. In corollary \ref{intrisK1} we have seen that two PI--equivalent fundamental algebras have isomorphic semi--simple parts. Thus  it is natural  to study  fundamental algebras $A$ with a given fixed semi--simple part $\bar A=\oplus_{i=1}^qM_{n_i}(F)$ so that $\beta(A)=t= \sum_{i=1}^q n_i^2$.
\smallskip

From Lemma \ref{fulfu}  it follows that the  second Kemer index $\gamma(A)$, which for a fundamental algebra coincides with the maximum $s$ for which $J^s\neq 0$, must be $\geq q-1$.  The case $\gamma(A)=q-1$ is attained by upper triangular matrices  with semi--simple part $\bar A$.\smallskip

So our present goal is to analyse fundamental algebras with given $\bar A$ and Kemer index $t=\dim(\bar A),s\geq q-1$. We use now Definition \ref{primary} and Proposition \ref{primdec}.

Consider a  $T$--ideal $I$   of identities of a fundamental algebra $A=\bar A\oplus J$, we have  seen in  Corollary \ref{intrisK1} that  the semisimple part $\bar A$  is determined by $I$.

We want to construct a canonical fundamental algebra  having semisimple part $\bar A$ and $I$  as  $T$--ideal $I$   of identities. This algebra is constructed as in \S \ref{FuAl}. \smallskip 

First we construct a universal object.  Take  the free product $\bar A\star  F \langle  X \rangle  $,  for  $X=\{x_1,\ldots,x_m\}$.  We assume $m\geq q$ where $q$ is the number of simple blocks of $\bar A$.

We now take this free product modulo the ideal of elements of degree $\geq s+1$ in the variables $X$, where $s$ is some fixed integer, call $\mathcal F_{\bar A,s}(X)$ the resulting algebra.  

$\mathcal F_{\bar A,s}(X)$  is a finite dimensional algebra with semisimple part $\bar A$ and Jacobson radical  $J$  of nilpotency $s+1$  generated by the $x_i$. It satisfies a universal property among such algebras.

\begin{remark}\label{uniFx}
Given a finite dimensional algebra $A$, with semisimple part $\bar A$ and Jacobson radical  $J$  of nilpotency $s+1$ any map $X\to J$,   extends to a unique  homomorphism of  $\mathcal F_{\bar A,s}(X)$ to $A$  which is the identity on $\bar A$.
\end{remark}

Given a finite dimensional algebra $A$ with given $t,s$ index we define $\mathcal A_s(X)$ to be  $\mathcal F_{\bar A,s}(X)$ modulo the ideal generated by all polynomial identities of $A$. 

The Jacobson radical $J_s$ of $\mathcal A_s(X)$ (and also of $\mathcal F_{\bar A,s}(X)$)   is the ideal generated by the $x_i$  and its semisimple part is $\bar A$.

 By construction and Remark \ref{uniFx},   given any list of elements $a_1,\ldots,a_m\in J$ there is a morphism  $\pi:\mathcal A_s(X)\to A$ which is the identity on $\bar A$  and maps $x_i\mapsto a_i$.

For $m$  sufficiently large  this morphism may be chosen so that the radical $J_s$  maps surjectively to the radical $J$ so also the map of $\mathcal A_s(X)$ to $A$ is  surjective, hence $\mathcal A_s(X)$ satisfies the same PI as $ A$.  

\begin{definition}\label{fundal} 
We now define  $\mathcal F_{\bar A,s} $ and $\mathcal A_s$ to be the algebras $\mathcal F_{\bar A,s}(X)$ and  $\mathcal A_s(X)$ where $X$ is  formed by $s$ variables.
\end{definition}
\begin{lemma}\label{samid} $\mathcal A_s$ satisfies the same identities as $A$.
\end{lemma}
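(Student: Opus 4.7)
The plan is to establish both inclusions $\mathrm{Id}(A)\subseteq\mathrm{Id}(\mathcal A_s)$ and $\mathrm{Id}(\mathcal A_s)\subseteq\mathrm{Id}(A)$. The first is immediate from the construction, since $\mathcal A_s$ is defined as the quotient of $\mathcal F_{\bar A,s}$ by the $T$--ideal generated by all identities of $A$. The content of the lemma is therefore the reverse inclusion: one must show that $|X|=s$ variables already suffice to detect every non--identity of $A$, whereas \emph{a priori} the preceding discussion only guarantees this for $|X|$ sufficiently large.

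For the nontrivial direction, I would start with a polynomial $f\notin\mathrm{Id}(A)$ and multilinearize (valid in characteristic $0$) to reduce to a multilinear $f(y_1,\ldots,y_n)$ possessing a non--zero evaluation in $A$. Using the Wedderburn decomposition $A=\bar A\oplus J$ together with multilinearity, that evaluation splits as a sum, over subsets $S\subseteq\{1,\ldots,n\}$, of \emph{restricted} evaluations in which variables indexed by $S$ are substituted by radical elements and the rest by semisimple ones; fix one non--zero restricted summand, with radical substitutions $b_{i_1},\ldots,b_{i_k}\in J$ at positions $I=\{i_1<\cdots<i_k\}$. The decisive observation is that $k\le s$: every monomial of $f$ produces a product lying in $J^{k}$ because $J$ is a two--sided ideal of $A$, and $J^{s+1}=0$ would force the whole restricted evaluation to vanish if $k\ge s+1$. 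Now in $\mathcal A_s=\mathcal A_s(X)$ with $X=\{x_1,\ldots,x_s\}$ substitute $y_{i_j}\mapsto x_j\in J_s$ for $j\le k$ and $y_\ell\mapsto\bar b_\ell\in\bar A$ for $\ell\notin I$, producing an element $g\in\mathcal A_s$. By Remark \ref{uniFx} there is a homomorphism $\pi\colon\mathcal A_s\to A$ which is the identity on $\bar A$ and sends $x_j\mapsto b_{i_j}$ for $j\le k$, $x_j\mapsto 0$ for $j>k$; then $\pi(g)$ is precisely the chosen non--zero restricted evaluation of $f$, so $g\ne 0$ in $\mathcal A_s$ and hence $f\notin\mathrm{Id}(\mathcal A_s)$.

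The only real obstacle is securing the bound $k\le s$, and this is what the $t,s$--index hypothesis $J^{s+1}=0$ buys us, once one has reduced to restricted evaluations via multilinearization and Wedderburn. After that reduction, the rest is formal: the $s$ variables of $X$ accommodate all possible radical positions simultaneously (padding unused variables to $0$), and the universal property packaged in Remark \ref{uniFx} supplies the homomorphism $\pi$ that transports non--vanishing from $A$ back to $\mathcal A_s$. A minor point to verify carefully is that when $k<s$ the padding $x_j\mapsto 0$ for $j>k$ genuinely extends to an algebra homomorphism, but this is exactly the content of the universal property of $\mathcal F_{\bar A,s}(X)$ applied to the list $(b_{i_1},\ldots,b_{i_k},0,\ldots,0)\in J^s$.
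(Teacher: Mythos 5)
Your proof is correct and follows essentially the same route as the paper: reduce to a multilinear polynomial with a non-zero restricted evaluation, observe that $J^{s+1}=0$ bounds the number of radical substitutions by $s$, substitute those positions by $x_1,\ldots,x_s$ (padding with $0$) and the semisimple positions by elements of $\bar A$, and use the universal property of Remark \ref{uniFx} to factor the non-zero evaluation in $A$ through $\mathcal A_s$. The extra details you supply (multilinearization, the split into restricted summands, the explicit padding) are exactly what the paper's terser argument leaves implicit.
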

\begin{proof}
By construction all identities of $A$ are satisfied by $\mathcal A_s$, so we need to prove the converse and we may take a multilinear polynomial $F$ which is not a PI of $A$. Then there is a substitution of $f$ in $A$,  which we may assume to be restricted, which is non zero.

In this substitution at most $s$ variables are in the radical the others are in some matrix units of  $\bar A$.

We now take the same substitution  for matrix units in $\bar A$ and the remaining variables, which we may call $y_1,\ldots,y_k,\ k\leq s$   we substitute in $x_1,\ldots,x_k\in \mathcal A_s$.

By the universal property  the evaluation of $f$ in $A$ factors through this evaluation in $\mathcal A_s$  which therefore is different from 0.
\end{proof}
 
\begin{lemma}\label{assff}
Let $B,A=B/I$  be two finite dimensional algebras, $J$ the radical of $B$ and $I\subset J$  so $A,B$   have the same semisimple part  $\bar A$.

Assume that $A,B$ have the same nilpotency index and $A$ is fundamental, then $B$ is fundamental and with the same Kemer index as $A$.  
\end{lemma}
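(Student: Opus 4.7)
The plan is to sandwich the Kemer index of $B$ between that of $A$ (from below) and the $(t,s)$-index of $B$ (from above), showing all three coincide, and then invoke Theorem~\ref{primoK}.

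First, I would compare the $(t,s)$-indices. Since $A$ and $B$ have the same semisimple part $\bar A$, Definition~\ref{tAsA} gives $t_A = \dim_F \bar A = t_B$. Since they have the same nilpotency index of the radical, we also get $s_A = s_B$. Hence the two algebras share the same $(t,s)$-index, which I will denote $(t,s)$.

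Second, I would use the surjection $B \twoheadrightarrow A$. Any polynomial identity of $B$ is automatically a polynomial identity of the quotient $A = B/I$, so $\mathrm{Id}(B) \subseteq \mathrm{Id}(A)$. By Remark~\ref{leski}, taking Kemer indices reverses the inclusion, giving $\mathrm{Ind}(B) \geq \mathrm{Ind}(A)$ in the lexicographic order. On the other hand, the text states immediately after Definition~\ref{tAsA} that for every finite dimensional algebra the Kemer index is bounded above by the $(t,s)$-index, so $\mathrm{Ind}(B) \leq (t_B, s_B) = (t,s)$.

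Finally, since $A$ is fundamental, Theorem~\ref{primoK} gives $\mathrm{Ind}(A) = (t_A, s_A) = (t,s)$. Chaining the inequalities,
\[
(t,s) \;=\; \mathrm{Ind}(A) \;\leq\; \mathrm{Ind}(B) \;\leq\; (t_B,s_B) \;=\; (t,s),
\]
which forces $\mathrm{Ind}(B) = (t,s)$. Thus the Kemer index of $B$ coincides with its $(t,s)$-index, so by the reverse direction of Theorem~\ref{primoK}, $B$ is fundamental; and its Kemer index equals that of $A$.

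There is no real obstacle here beyond correctly assembling the three ingredients (equality of $(t,s)$-indices, monotonicity of Kemer index under inclusion of $T$-ideals, and the characterization of fundamentality via equality of Kemer and $(t,s)$-indices). The only place where one must be a bit careful is checking that the assumption ``same nilpotency index'' is truly the statement $s_A = s_B$ and not off by one, which is immediate from Definition~\ref{tAsA}.
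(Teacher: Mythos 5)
Your proposal is correct and follows essentially the same route as the paper: equality of the $(t,s)$-indices, the bound $\mathrm{Ind}(B)\leq (t_B,s_B)$, the monotonicity $\mathrm{Ind}(B)\geq \mathrm{Ind}(A)$ coming from $\mathrm{Id}(B)\subseteq \mathrm{Id}(A)$, and then both directions of Theorem~\ref{primoK}. Your write-up just spells out the sandwich argument the paper states more tersely.
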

\begin{proof}
The assumption implies that $A,B$ have the same $t,s$ index, then the statement  follows from Theorem \ref{primoK} since by by hypothesis the  Kemer index of $A$ equals the $t,s$ index, on the other hand clearly the Kemer index of  $B$, which is less or equal than its $t,s$ index, cannot be less than the Kemer index of $A$.\end{proof}
\begin{proposition}\label{ass}
$\mathcal F_{\bar A,s} $ and $\mathcal A_s$ are fundamental algebras  with Kemer index  $t=\dim(\bar A),s$.
\end{proposition}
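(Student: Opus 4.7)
The statement reduces, via Theorem \ref{primoK}, to showing that in each case the Kemer index equals the $t,s$--index $(t,s)$. I would first verify the $t,s$--index of both algebras, then deduce that $\mathcal A_s$ is fundamental by transport of identities from $A$, and finally bootstrap from $\mathcal A_s$ to $\mathcal F_{\bar A,s}$ via Lemma \ref{assff}.

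First I would compute the $t,s$--index. Both $\mathcal F_{\bar A,s}$ and $\mathcal A_s$ have semisimple part $\bar A$ of dimension $t$ by construction, and in both the radical is nilpotent of index $\leq s+1$: for $\mathcal F_{\bar A,s}$ this is by definition (we killed everything of degree $\geq s+1$ in $X$), and for $\mathcal A_s$ it is inherited under the quotient map. The nontrivial point is $J^s\neq 0$. Since $A$ is fundamental with $s$--index $s$, pick $a_1,\dots,a_s\in J_A$ with $a_1\cdots a_s\neq 0$. By Remark \ref{uniFx} and the definition of $\mathcal A_s$, the assignment $x_i\mapsto a_i$ and the identity on $\bar A$ extends to surjections $\mathcal F_{\bar A,s}\twoheadrightarrow \mathcal A_s\twoheadrightarrow A$, under which $x_1\cdots x_s\mapsto a_1\cdots a_s\neq 0$. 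Hence $x_1\cdots x_s$ is nonzero in both algebras, so both have $t,s$--index exactly $(t,s)$.

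Second, I would compute the Kemer index of $\mathcal A_s$. By Lemma \ref{samid}, $\mathrm{Id}(\mathcal A_s)=\mathrm{Id}(A)$, hence $\mathrm{Ind}(\mathcal A_s)=\mathrm{Ind}(A)$. Since $A$ is fundamental, Theorem \ref{primoK} gives $\mathrm{Ind}(A)=(t,s)$, so $\mathrm{Ind}(\mathcal A_s)=(t,s)$. Combined with the previous paragraph, the Kemer and $t,s$--indices of $\mathcal A_s$ coincide, and the converse direction of Theorem \ref{primoK} yields that $\mathcal A_s$ is fundamental with Kemer index $(t,s)$. Then $\mathcal F_{\bar A,s}$ is handled by Lemma \ref{assff} applied to the surjection $\mathcal F_{\bar A,s}\twoheadrightarrow \mathcal A_s$: both algebras share the same semisimple part $\bar A$ and the same nilpotency index $s+1$ (Step 1), and $\mathcal A_s$ is fundamental (Step 2), so $\mathcal F_{\bar A,s}$ is fundamental with the same Kemer index $(t,s)$.

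The main obstacle, conceptually, is verifying that $J^s\neq 0$ in $\mathcal A_s$: this is the one place where the hypothesis that $A$ is genuinely fundamental (rather than merely having $t,s$--index $(t,s)$) actually enters, via the existence of a nonzero $s$--fold radical product in $A$ that can be pulled back through the universal map. Once that is in hand, Theorem \ref{primoK} and Lemma \ref{assff} do the remaining work mechanically.
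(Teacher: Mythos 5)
Your proposal is correct and follows essentially the same route as the paper's (very terse) proof: establish that by construction the $t,s$--index of $\mathcal A_s$ is $(t,s)$, use Lemma \ref{samid} to transfer the Kemer index from the fundamental algebra $A$, conclude by the converse direction of Theorem \ref{primoK}, and bootstrap to $\mathcal F_{\bar A,s}$ via Lemma \ref{assff}. One harmless inaccuracy: the map $\mathcal A_s\to A$ determined by $x_i\mapsto a_i$ and the identity on $\bar A$ need not be surjective (its image is only the subalgebra generated by $\bar A$ and the $a_i$), but your argument uses nothing beyond $x_1\cdots x_s\mapsto a_1\cdots a_s\neq 0$, so the conclusion $J^s\neq 0$ stands.
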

\begin{proof}
This follows from Theorem \ref{primoK} since by construction its Kemer index equals the $t,s$ index.\end{proof}
\begin{definition}\label{model1}
We call $\mathcal A_s$  the {\em canonical model} of $A$.
\end{definition}
\begin{definition}\label{uniFu} The algebra $\mathcal F_{\bar A,s} $ 
is the {\em universal fundamental algebra}  for $\bar A,s$.
\end{definition}
Remark that only if $m$ is sufficiently large we have that $\mathcal F_{\bar A,s}(X)$ is fundamental, since we need the existence of some fundamental algebra  quotient of  $\mathcal F_{\bar A,s}(X)$.  We claim that we have the exact condition $m\geq q-1$.

For $m=q-1$ we may take as fundamental algebra an algebra  $R$ of upper triangular matrices with $\bar A$ as semisimple part it is easily seen that such an algebra is generated by   $q-1$ elements over $\bar A$.

In fact the algebra $R$ can be described as the direct sum  $\oplus_{i\leq j}\hom(V_i,V_j)$ where $\dim V_i=n_i$.  $\bar A= \oplus_{i }\hom(V_i,V_i)$ and $\hom(V_i,V_j)$ is an irreducible module under $\hom(V_i,V_i)\oplus \hom(V_j,V_j)$ for this we may take the elements  $E_{i,i+1}$ a non zero matrix  in the corresponding block $\hom(V_i,V_{i+1})$.

Remark also that by construction the nilpotent subalgebra of  $\mathcal F_{\bar A,s}(X)$ generated by the $x_i$ is a relatively free nilpotent algebra.\smallskip

{\bf Question}  Is the $T$--ideal of identities of  $\mathcal F_{\bar A,s} $ irreducible, and how is it described?
\subsubsection{Description of $\mathcal F_{\bar A,s}(X)$} 

Let $V$ be the vector space with basis the elements $x_i$. 
In degree $h$ the algebra $\mathcal F_{\bar A,s}(X)=\mathcal F_{\bar A,s}(V)$ can be described as follows, consider 
\begin{definition}
Let $\mathcal M$ be the monoid in two generators $a,b$ with relation $a^2=a$.
\end{definition} 

Elements of this monoid correspond to words in $a,b$ in which $aa$ never appears as sub-word.  When we multiply two such words, if we have the factor $aa$ appearing we reduce it by the rule to $a$.\medskip

Now to such a word $w$ we associate a tensor product $T_w$ of $\bar A$ whenever we have an $a$ and $V$  when we have a  $b$
$$ w= abbab\mapsto T_w=\bar A\otimes V\otimes V\otimes\bar A\otimes V.$$
The multiplication of two such words $w_1w_2$ according to the previous rule, induces a multiplication $ T_{w_1}T_{w_2}\subset T_{w_1w_2}.$ We take the corresponding tensor product and if we get a factor $\bar A\otimes \bar A$ we replace it by $\bar A$ by multiplication. 

Then we see that $\mathcal F_{\bar A,s}(X)$ is the direct sum $\oplus T_w$  where $w$ runs over all the words of previous type with at most $s$  appearances of $b$. It is a  graded algebra over this monoid, truncated at degree $s$ in $b$.

As representation of  $GL(m,F)\times G=GL(V)\times G$, $\mathcal F_{\bar A,s}(V)$ in degree $h$,  is a direct  sum  of  $c_{i,j}$ spaces each isomorphic to $\bar A^{\otimes i}\otimes  V^{\otimes h}$ where $c_{i,j}$ is the number of words in $\mathcal M$ with $i$ times $a$ and $j$ times $b$. The summands correspond to the type of elements in the free product  which are monomials in $j$ elements of $V$  and $i$  elements of $\bar A$.
\begin{corollary}\label{ress}

Having fixed $s$  the $GL(V)$  invariant subspaces of  $\mathcal F_{\bar A,s}(V)$ all intersect  $\mathcal F_{\bar A,s}(V_s)$ where $V_s$   is the subspace of dimension $s$ spanned by the first $s$ variables.

\end{corollary}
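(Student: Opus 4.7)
The plan is to exploit the explicit description just given of $\mathcal{F}_{\bar A,s}(V)$ together with the Schur--Weyl picture already used in Section \ref{usS}. Under the action of $GL(V)$ (which is trivial on the $\bar A$ factors and standard on the $V$ factors), each summand $T_w$ decomposes as
\[
T_w \;\cong\; \bar A^{\otimes i(w)} \otimes V^{\otimes j(w)},
\]
where $j(w)$ is the number of occurrences of $b$ in $w$. By construction $j(w)\le s$ for every $w$ appearing in $\mathcal{F}_{\bar A,s}(V)$, so as a $GL(V)$--module $\mathcal{F}_{\bar A,s}(V)$ is a sum of copies of modules of the form $V^{\otimes j}$ with $j\le s$.

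Applying Schur--Weyl duality (Formula \eqref{ScW}) to each $V^{\otimes j}$, every $GL(V)$--isotypic component occurring in $\mathcal{F}_{\bar A,s}(V)$ is of type $S_\lambda(V)$ with $\lambda\vdash j\le s$; in particular $\mathrm{ht}(\lambda)\le s$. The next step is the standard fact (reviewed in Section \ref{usS}) that for any such $\lambda$, the highest weight vector of $S_\lambda(V)$ with respect to the unipotent group $U$ is homogeneous of multidegree equal to the row lengths of $\lambda$, hence involves only the first $\mathrm{ht}(\lambda)\le s$ variables. Thus every highest weight vector in $\mathcal{F}_{\bar A,s}(V)$ actually lies in $\mathcal{F}_{\bar A,s}(V_s)$.

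Now let $W\subset \mathcal{F}_{\bar A,s}(V)$ be a nonzero $GL(V)$--invariant subspace. Since $GL(V)$ acts locally finitely (each homogeneous component is finite dimensional, with the subspaces $\mathcal{F}_{\bar A,s}(V')$ for $V'\subset V$ finite dimensional exhausting $\mathcal{F}_{\bar A,s}(V)$), $W$ decomposes into a direct sum of irreducible $GL(V)$--submodules. Pick any such irreducible submodule; it is of the form $S_\lambda(V)$ with $\mathrm{ht}(\lambda)\le s$, and it is generated by its highest weight vector $v_\lambda\in W$. By the previous paragraph $v_\lambda\in \mathcal{F}_{\bar A,s}(V_s)$, so $W\cap \mathcal{F}_{\bar A,s}(V_s)\neq 0$.

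The only subtle point is handling the case $\dim V=\infty$: one must verify that the $GL(V):=\bigcup_m GL(m,F)$ action is still locally finite in the sense needed, so that $W$ decomposes into irreducibles. This is immediate from the fact that $\mathcal{F}_{\bar A,s}(V)=\bigcup_{V'}\mathcal{F}_{\bar A,s}(V')$ over finite dimensional $V'\subset V$ containing $V_s$, each piece being $GL(V')$--stable and finite dimensional in each degree, so standard arguments on rational representations of the general linear group apply. Once this is in place, the argument above yields the claim.
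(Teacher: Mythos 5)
Your proof is correct and takes essentially the same approach as the paper: since $\mathcal F_{\bar A,s}(V)$ is, as a $GL(V)$--module, a direct sum of copies of $V^{\otimes h}$ with $h\le s$, any invariant subspace is generated by its highest weight vectors, which by the weight considerations of \S\ref{usS} involve only the first $s$ variables and hence lie in $\mathcal F_{\bar A,s}(V_s)$. Your write-up simply makes explicit the Schur--Weyl decomposition and the local finiteness that the paper's two-line proof leaves implicit.
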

\begin{proof}
This is a property of each $V^{\otimes h},\ h\leq s$.  A  $GL(V)$  invariant subspace is generated by its highest weight vectors  which in $V^{\otimes h}$ depend on the first $h$  elements of a chosen basis  of $V$ (the variables).
\end{proof}

\subsection{Some complements} 

\subsubsection{Generalized identities} 
The algebra $\mathcal F_{\bar A,s}(X)$ should be thought of as a free algebra in a suitable category, so we give the following
\begin{definition}\label{Ral2}
Given an algebra $R$, an {\em  $R$--algebra} is any associative algebra $S$ with a bimodule action of $R$  on $S$  satisfying
\begin{equation}\label{assax}
r(s_1s_2)= (r s_1)s_2,\  (s_1s_2)r=s_1(s_2 r),\ (s_1r)s_2= s_1(r s_2),\ \forall r\in R,\  s_1,s_2\in S. 
\end{equation}   \end{definition}

Given an $R$--algebra $S$  we can give to $R\oplus S$ a structure of algebra by setting
$$(r_1,s_1)(r_2,s_2)=  (r_1s_1, r_1s_2+s_1r_2+s_1s_2) $$ the axioms \eqref{assax}  are the ones necessary and sufficient to have that $R\oplus S$ is associative.\medskip

The canonical model has the following universal property, consider a nilpotent algebra $R$ satisfying the PI's of $A$, with $R^{s+1}=0$   and equipped with a  $ \bar A $ algebra structure, according to Definition \ref{Ral2}.

Then any map of $X\to R$    extends to a map $\mathcal A_s\to \bar A\oplus R$  equal to $j$ on $\bar A$.\medskip

In particular  $\mathcal F_{\bar A,s}(X)$ and $\mathcal A_s$ behave as {\em relatively free $\bar A$ algebras}.

The  endomorphisms  of  $\mathcal F_{\bar A,s}(X)$ resp. $\mathcal A_s$ which are the identity on $\bar A$ correspond to arbitrary substitutions of the variables $x_i$ with elements of the radical.

This gives rise to the notion of $T$--ideal in $\mathcal F_{\bar A,s}(X)$ or ideal of generalised identities.\smallskip

Recall that, in an algebra $R$ a {\em verbal ideal} is an ideal generated by the evaluations in $R$ of a $T$--ideal.  In other words a verbal  ideal is an ideal $I$ of $R$  minimal with respect to the property that $R/I$ satisfies some given set of polynomial identities.\smallskip

The verbal ideals of $\mathcal F_{\bar A,s} $ defining the algebras $\mathcal A_s$
are clearly $T$--ideals, but not all $T$--ideals are of this type as even the simplest examples show (cf. Example \ref{simpe}).

In particular the action of the linear group $GL(m,F)$  on the vector space with basis the  elements $x_i$ and also the automorphism group $G$ of  $\bar A$ extend to give a group $GL(m,F)\times G$ of automorphisms of  $\mathcal F_{\bar A,s}(X)$ and $\mathcal A_s$.

Thus the kernel of the quotient map $\mathcal F_{\bar A,s}(X)\to \mathcal A_s$ is stable under the group $GL(m,F)\times G$ of automorphisms.

In fact the possible ideals  of  $\mathcal F_{\bar A,s}(X)$ appearing in this way are all verbal ideals  evaluations on $\mathcal F_{\bar A,s}(X)$ of a $T$ ideal $\Gamma$  which contains the PI's of  $\bar A$. Of course, since $\bar A=\oplus_{i=1}^qM_{n_i}(F)$ this condition is that $\Gamma$    contains the PI's of  $n\times n$ matrices where $n=\max n_i$.

Not all such verbal ideals can occur but only the ones for which the Kemer index is $t,s$.  There is the further condition that   do not  decrease the nilpotency order $s+1$, this must be included if we fix $m$. If we let $m$ increase  then it will be automatically satisfied. 

In fact the previous analysis confirms the choice of  $m=s$ given in \ref{fundal} and proved in \ref{samid}.

 This may also be interpreted as fixing a Capelli identity satisfied by the algebras under consideration.
\begin{example}\label{simpe}
$\bar A=F,\ s=1$  then if $e$ is the unit of $F$  the algebra $\mathcal F_{\bar A,s}$ is 5 dimensional with basis
$$   e,\ x,\ ex,\ xe,\ exe$$
\end{example}
One can see that there are 4  verbal ideals contained in the radical, for the identities
$$[x,y],\ [x,y]z,\ z[x,y],\ z[x,y]w.$$There are 5,  $T$--ideals.  The radical is a $T$--ideal but not verbal, the corresponding verbal ideal is  $    \ ex,\ xe,\ exe$.\medskip

As for verbal ideals one may construct them as follows.
Take a multilinear polynomial $f(y_1,\ldots,y_h)$ which is also a polynomial identity of  $\bar A$, that is of $n\times n$ matrices where $n=\max n_i$.

We may consider all possible evaluations  of this polynomial in which some of the variables $y_j$ are substituted with matrix units  in $\bar A$ and the remaining variables are left unchanged but free to be evaluated in the radical.

Then we may leave out of the semisimple evaluation at most $s$ variables which we can evaluate in the elements $x_1,\ldots,x_s$.

In this way we have constructed a finite list of elements in $\mathcal F_{\bar A,s} $ and the ideal they generated is the verbal ideal associated to $f(y_1,\ldots,y_h)$.
\subsection{Moduli}
We have seen that the canonical fundamental algebras  with a given $\bar A$ and $s$  are the quotients of the finite dimensional algebra   $\mathcal F_{\bar A,s} $ modulo verbal ideals. Thus it is natural to ask if these quotients arise in algebraic families.
\begin{proposition}\label{verpr}
Let $A$ be a finite dimensional algebra, the set of ideals, respectively $T$--ideals is closed in the Grassmann variety of subspaces of codimension $h$.
\end{proposition}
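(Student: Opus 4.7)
The approach is to identify both loci with vanishing loci of canonical bundle maps on the Grassmannian $G := \mathrm{Gr}(n-h, A)$, where $n = \dim_F A$. On $G$ there is the tautological exact sequence
$$0 \to \mathcal{S} \to \mathcal{O}_G \otimes A \to \mathcal{Q} \to 0$$
with $\mathcal{S}$ of rank $n-h$. For any linear endomorphism $\phi \in \mathrm{End}_F(A)$, the composition
$$\psi_\phi : \mathcal{S} \hookrightarrow \mathcal{O}_G \otimes A \xrightarrow{\mathrm{id}\otimes \phi} \mathcal{O}_G \otimes A \twoheadrightarrow \mathcal{Q}$$
is a morphism of bundles, and its scheme-theoretic zero locus $Z(\psi_\phi) \subset G$ is a closed subvariety that parameterises precisely those $V$ for which $\phi(V) \subset V$. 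One can also see this concretely in local coordinates: on the affine chart where $V$ is expressed as the column span of a matrix of shape $(n-h) \times n$ in block form $\begin{pmatrix} I & M\end{pmatrix}^{\top}$, the condition $\phi(V) \subset V$ becomes polynomial equations in the entries of $M$.

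For ordinary two-sided ideals, $V$ is an ideal if and only if $L_a(V) \subset V$ and $R_a(V) \subset V$ for every $a \in A$, where $L_a$ and $R_a$ denote left and right multiplication. These conditions are linear in $a$, so it suffices to impose them as $a$ runs over a fixed basis $a_1, \ldots, a_n$ of $A$. The ideal locus is therefore the finite intersection
$$\bigcap_{i=1}^n Z(\psi_{L_{a_i}}) \cap Z(\psi_{R_{a_i}}),$$
which is closed in $G$.

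For $T$--ideals, interpreted in line with the Moduli discussion as ideals stable under every algebra endomorphism of $A$ (or, in the setting of $\mathcal F_{\bar A,s}$, under every $\bar A$--algebra endomorphism, i.e.\ every substitution $x_i \mapsto r_i$ with $r_i$ in the radical), the locus is further cut out by the conditions $\phi(V) \subset V$ as $\phi$ ranges over the variety $\mathrm{End}_{\mathrm{alg}}(A)$ (respectively the relevant semigroup of substitutions). Each such condition is the closed subvariety $Z(\psi_\phi)$ of $G$, and an arbitrary intersection of Zariski closed sets is closed; hence the $T$--ideal locus is closed as well.

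There is essentially no hard obstacle: the whole argument is formal once one observes that ``preserving $V$'' is a closed condition on $V$. The only point that requires a moment of care is to pin down, in each setting, which semigroup of endomorphisms defines the $T$--ideal notion being used, and to remember that one may intersect over arbitrarily many such $\phi$ because closedness in the Zariski topology is preserved under arbitrary intersections; in the ideal case one also uses linearity in $a$ to reduce to finitely many defining equations, which shows in addition that the ideal locus, and hence the $T$--ideal locus, is not just closed but is cut out by an explicit finite collection of determinantal-type equations in any chart.
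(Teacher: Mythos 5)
Your proof is correct and follows essentially the same route as the paper: observe that for any linear map $\phi$ the locus $\{V : \phi(V)\subset V\}$ is closed in the Grassmannian, then cut out the ideal locus by the operators of left and right multiplication and the $T$--ideal locus by further intersecting over endomorphisms of $A$; your bundle-map and local-coordinate descriptions merely make explicit what the paper leaves as a one-line remark. (Only the final aside that the $T$--ideal locus is cut out by an \emph{explicit} finite set of equations overreaches slightly, since there one intersects over an a priori infinite family of endomorphisms, but this is not needed for closedness.)
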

\begin{proof}
Let $I$ be a subspace, the condition to be an ideal is that it should be stable under multiplication,  left and right, by elements of $A$, instead the condition of being a $T$--ideal is to be stable also under endomorphisms  of $A$.

For any linear operator $\rho:A\to A$  the set of subspaces  stable under $\rho$ is closed in the Grassmann variety hence the claim.

 \end{proof}  The condition that $I$ is verbal is that it coincides with the ideal generated by the   elements  $f(a_1,\ldots,a_h)  $ for some set of multilinear polynomials.\smallskip

Since $A$ is finite dimensional if $I$ is generated by the evaluations of some set $f$ of multilinear polynomials it is also generated by the evaluations of finitely many of them.

So every verbal ideal is in some class $V_k$ of verbal ideals which  can be generated by some multilinear polynomials $f(y_1,\ldots,y_h)$ with $h\leq k$.  In particular thus  given some sequence of possible codimensions $\underline d:=\{d_i\},\ i=1,\ldots, k$  we may consider the set $V_{h, \underline d}$ of  verbal ideals of $A$  of codimension $h$ generated by  $T$ ideals of those given codimensions.
\begin{proposition}\label{verpr1}
Let $A$ be a finite dimensional algebra, the set of   verbal ideals  $V_{h, \underline d}$  is   locally closed in the Grassmann variety of subspaces of codimension $h$.
\end{proposition}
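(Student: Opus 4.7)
The plan is to realize $V_{h,\underline d}$ as the intersection, inside the Grassmannian $\mathrm{Gr}(h,A)$ of codimension-$h$ subspaces of $A$, of one closed subset, one locally closed subset, and one open subset. For any ideal $I\subseteq A$ of codimension $h$, let $J(I)\subseteq V_1\oplus\cdots\oplus V_k$ denote the largest $T$--ideal (in the truncated multilinear setting of degrees $\le k$) all of whose evaluations in $A$ lie in $I$; concretely $J(I)\cap V_i$ is the kernel of the linear map $\varepsilon_i(I)\colon V_i\to\Hom(A^{\otimes i},A/I)$ induced by evaluation. The first step is to characterize $V_{h,\underline d}$ as the set of $I$ satisfying: (a) $I$ is an ideal of codimension $h$; (b) $\mathrm{codim}(J(I)\cap V_i,V_i)=d_i$ for each $i=1,\dots,k$; and (c) the verbal ideal in $A$ generated by $J(I)$ equals $I$. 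Necessity is immediate: any witness $T$--ideal $J$ with codimensions $d_i$ generating $I$ is contained in $J(I)$, so (c) holds by maximality and (b) is recovered by replacing $J$ with $J(I)$ itself.

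Condition (a) cuts out a closed subset $\mathcal G\subseteq\mathrm{Gr}(h,A)$ by Proposition~\ref{verpr}. For (b), the maps $\varepsilon_i(I)$ globalize to a morphism of vector bundles $V_i\otimes\mathcal O_{\mathrm{Gr}(h,A)}\to\Hom(A^{\otimes i},\mathcal Q)$, where $\mathcal Q$ is the tautological rank-$h$ quotient bundle; then $\mathrm{codim}(J(I)\cap V_i,V_i)$ is the rank of this bundle map at $I$, which is lower semicontinuous. Therefore ``$\mathrm{codim}=d_i$'' is the intersection of the open condition ``rank $\ge d_i$'' with the closed condition ``rank $\le d_i$'', hence locally closed. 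Intersecting over all $i$ with $\mathcal G$ yields a locally closed subset $\mathcal S\subseteq\mathrm{Gr}(h,A)$, and on $\mathcal S$ the $J(I)\cap V_i$ assemble into subbundles of constant rank, so that $J(I)$ forms a coherent family of $T$--ideals over $\mathcal S$.

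On $\mathcal S$ the evaluation-multiplication map
\[
\mu\colon\bigoplus_i\bigl(J(I)\cap V_i\bigr)\otimes A^{\otimes i}\otimes A\otimes A\longrightarrow A,\qquad f\otimes\bar a\otimes a\otimes a'\mapsto a\,f(\bar a)\,a',
\]
is a morphism of vector bundles whose fiberwise image is precisely the verbal ideal generated by $J(I)$. By definition of $J(I)$ and the ideal property of $I$, this image is always contained in the tautological subbundle $\mathcal I\subset A\otimes\mathcal O_{\mathcal S}$ of fiber rank $\dim A-h$, so $\mathrm{rank}(\mu)\le\dim A-h$ pointwise. Hence condition (c) --- that the image equal $I$ --- is precisely the locus where $\mathrm{rank}(\mu)=\dim A-h$, which by lower semicontinuity of rank is open in $\mathcal S$. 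Combining, $V_{h,\underline d}$ is the intersection of the locally closed $\mathcal S$ with an open locus, so it is locally closed in $\mathrm{Gr}(h,A)$.

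The main obstacle is step (c) together with the justification of the characterization: one must verify that restricting the witnessing $T$--ideal to the maximal choice $J(I)$ faithfully captures the definition of $V_{h,\underline d}$ rather than only a subset thereof, and that the map $I\mapsto J(I)$ truly varies algebraically as $I$ runs through the Grassmannian. An alternative, more parametric approach would work directly on the projective variety $Y\subseteq\prod_i\mathrm{Gr}(d_i,V_i)$ cut out by the closed ``$T$--ideal'' conditions, pass to the locally closed stratum $Y_h\subseteq Y$ on which the generated verbal ideal has codimension exactly $h$, and show via Chevalley constructibility together with the properness of $Y$ and the same semicontinuity arguments above that the image of $Y_h$ under projection to $\mathrm{Gr}(h,A)$ is locally closed; either viewpoint leads to the same conclusion.
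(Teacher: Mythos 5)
Your argument is correct and is essentially the paper's own proof: you stratify the Grassmannian by the (co)dimensions of the maximal spaces of multilinear polynomials of each degree evaluating into $U$ (your bundle-map rank condition is the paper's semicontinuity of the fibre dimension of the incidence variety), intersect with the closed locus of ideals from Proposition~\ref{verpr}, and then express ``the evaluations generate $U$'' as surjectivity of a bundle map onto the tautological subbundle, an open condition. The only cosmetic caveat is that, for algebras without unit, your map $\mu$ should also include the terms $f(\bar a)$, $a f(\bar a)$ and $f(\bar a)a'$, exactly as in the paper's passage from the span of evaluations to the ideal they generate.
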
 \begin{proof}
Let $M_k$ be the space of multilinear polynomials in $k$  variables  and $G_h(A)$  the Grassmann variety of codimension $h$ subspaces of $A$. In $M_k\times G_h(A)$ consider the subset  $\mathcal M_{k,h,A}$  of these pairs  $f,U$ such that all evaluations of $f$ in $A$ lie in the subspace $U$, clearly  $\mathcal M_{k,h,A}$ is closed, the projection map  $\pi_k : \mathcal M_{k,h,A}\to G_h(A)$ has the property that the fibre of $U$  is the linear subspace  of $M_k $    of the polynomials   $f $ such that all evaluations of $f$ in $A$ lie in the subspace $U$. Since the dimension of a fibre is semicontinuous we have a stratification of  $G_h(A)$  by locally closed sets where the dimension is fixed. In particular we have a locally closed subset  $G_{h, \underline d}(A)$    where the dimension of the fibre of $\pi_i$ equals $d_i$ for all $i\leq k$.

 This then intersected with the closed subset of subspaces which are ideals defines again a  locally closed set $X_{\underline d}$, on this variety we have two   vector bundles, the tautological bundle $\mathcal U_{\underline d}$ which at some point $U$ has as fibre the ideal $U$,  and the bundle $\mathcal W:=\oplus_i \mathcal W_i$  whose fibre is the direct sum of the fibres  $W_i=(\mathcal W_i)_U$ at some  ideal $U$, formed by the direct sum of the spaces of multilinear polynomials of degree $i\leq k$ which evaluated in $A$ take values which lie in $U$.
 
 In this set we have to  identify the subset of the ideals generated by the evaluations of the corresponding polynomials and we claim that this condition is open.

This condition  is given by  imposing that a certain linear map  has a maximal rank.  The map is the following  for a multilinear polynomial  $f$  in $k$ variables the span  of its image is the image of  the map  $f:A^{\otimes k}\to A$.  For a space $W$ of multilinear polynomials we have a  map $W_k\otimes A^{\otimes k}\to A$,  globally we have have a map  of vector bundles
$$\oplus_i \mathcal W_i \otimes A^i\to \mathcal U_{\underline d}. $$  For the ideal generated by these polynomials we have to add  the polynomials $xfy$ with two extra variables $x,y$ which will induce a map $\oplus_i \mathcal W_i \otimes A^{i+2}\to \mathcal U_{\underline d}. $  The verbal ideals is the open set of    $X_{\underline d}$ where this  map of vector  bundles is surjective. The condition to be surjective is then clearly open in $U$.

 \end{proof}
 Finally when we apply this to fundamental algebras we need the further restriction to take those verbal ideals of $\mathcal F_{\bar A,s} $   which do not contain the part of degree $s$, this is again an open condition.
 
 \subsubsection{ A canonical structure of Cayley--Hamilton algebra}
 We finally make a further connection with a construction from invariant theory.
 We give to $\mathcal F_{\bar A,s} $ a canonical structure of Cayley--Hamilton algebra  as follows.

Given $a\in \mathcal F_{\bar A,s} $ we set $t(a):=s tr(\bar a)$  where $\bar a\in \bar A=\oplus_{i=1}^qM_{n_i}(F)$ and we take as trace  the sum of the traces of the components which are matrices.

We claim that with this definition of trace  the elements satisfy the $(N +1)s$ Cayley--Hamilton identity  where $N=\sum_in_i$ or even the $ N  s$ Cayley--Hamilton identity if the algebra has an identity.

This follows from Formula \eqref{CH}.

We then have universal embeddings  $\mathcal F_{\bar A,s}\subset M_{(N +1)s}(U),\ \mathcal  A_s \subset  M_{(N +1)s}(U_A) $  with $U, U_A=U/I$  commutative rings over which acts the projective linear group $G:=PGL((N+1)s)$  and we have canonical isomorphisms and a commutative diagram
\begin{equation}\label{commun}\begin{CD}
\mathcal F_{\bar A,s}@>\sim>>M_{(N +1)s}(U)^G\\
@V\pi VV @V\pi VV\\
\mathcal  A_s @>\sim>>M_{(N +1)s}(U_A)^G\\
@V\pi VV @V\pi VV\\
\bar  A  @>\sim>>M_{(N +1)s}(U_{\bar A})^G
\end{CD} 
\end{equation}with the vertical maps surjective.

We can make a further reduction, by special properties of  $U_{\bar A}$  so that $\bar A$ embeds in matrices in a standard way.

This depends upon the fact that all embeddings of  $\bar A$ into $M_{(N +1)s}(F)$  which are compatible with the given trace are conjugate, this should mean that $U_{\bar A}$ is the coordinate ring of a homogeneous space and then there is a standard reduction to the subgroup.

We cannot expect to reduce the embedding as into the same matrix algebra over $F$.


\begin{thebibliography}{99} \bibitem{Alb}   A. A. Albert, {\sl Structure of Algebras.} American Mathematical Society Colloquium Publications, vol. 24. American Mathematical Society, New York, 1939. xi+210 pp.
\bibitem{AGPR} 
 {E. Aljadeff,  A. Giambruno,   
C. Procesi,  A. Regev}, {\sl Rings with polynomial identities
and finite dimensional representations of algebras} Book in preparation
\bibitem{AbK} 
 {E. Aljadeff, A. Kanel-Belov,Y. Karasik},\  Kemer's Th. for affine PI algebras over a field of characteristic zero, preprint.
\bibitem{amitsur1} S. A. Amitsur, The identities of PI-rings,  Proc. Amer. Math. Soc. {\bf 4}
(1953), 27-34.

\bibitem{amitsur2} S. A. Amitsur, The  T-ideals of the free ring, J. London Math. Soc. {\bf 30}
(1955), 470-475.

\bibitem{amitsur3} S. A. Amitsur,  A  noncommutative Hilbert basis Theorem  and subrings of matrices, Trans, A.M.S. 
Volume 149, May 1970
 
\bibitem{ArM} M. ~Artin, { On Azumaya algebras and
finite-dimensional representations
of rings}, J. Algebra {\bf 11} (1969), 532--563.

\bibitem{belov0} A.Ya. Belov, Rationality of Hilbert series with respect to free algebras, Russian Math. Surveys 52 (1997) 394--395.  

\bibitem{berele2.2} A. Berele,
Applications of Belov's theorem to the cocharacter sequence of p.i. algebras. 
J. Algebra 298 (2006), no. 1, 208--214. 

\bibitem{BergLew}Bergman, George M.; Lewin, Jacques
The semigroup of ideals of a fir is (usually) free. 
J. London Math. Soc. (2) 11 (1975), no. 1, 21Ð31. 

\bibitem{bokra}W. Borho, H. Kraft, 
\"Uber die Gelfand--Kirillov-Dimension.
Math. Ann. 220 (1976), no. 1, 1Ð24.

 \bibitem{Boutot} Jean-F. Boutot, , Singularit\'es rationnelles et quotients par les groupes r\'eductifs.  Invent. Math. 88 (1987), no. 1, 65--68. 
\bibitem {D6} C. ~De Concini,   C. ~Procesi, {\sl Topics in Hyperplane Arrangements, Polytopes and Box-Splines}
Series: Universitext,
2010, XXII, 381p. 

 \bibitem{Don}{S. Donkin},
    {Invariants of several matrices},
   {Inventiones Mathematicae},
    v. {110},
      {1992},
     n. {2},
     pp. {389--401}

\bibitem{Drens2} V. Drensky, {\em Free algebras and PI-algebras},
Graduate Course in Algebra, Springer-Verlag, Singapore (2000).

\bibitem{DF} V. Drensky, E. Formanek, {\em Polynomial identity rings}, Advanced Courses in Mathematics.
CRM Barcelona. Birkhäuser Verlag, Basel, 2004.

\bibitem{Eis} D. Eisenbud, {\sl Commutative Algebra: with a View Toward Algebraic Geometry} (Springer Graduate Texts in Mathematics)-- March 1, 1999
 
 \bibitem{GeKi}    I. M. Gel'fand,  A. A. Kirillov, On fields connected with the enveloping algebras of Lie algebras. (Russian) Dokl. Akad. Nauk SSSR 167 1966 503Ð505.

\bibitem{giambruno.zaicev1} A. Giambruno and M. Zaicev, On codimension growth of finitely generated associative algebras, Adv. Math. {\bf 140} (1998), 145--155.

\bibitem{giambruno.zaicev2} A. Giambruno and M. Zaicev, Exponential codimension growth of P.I. algebras: an exact estimate, Adv. Math. {\bf 142} (1999), 221-243.


\bibitem{giambruno.zaicev3} A. Giambruno and M. Zaicev, Involution codimensions of finite dimensional
algebras and exponential growth, J. Algebra {\bf 222} (1999),471--484.

\bibitem{giambruno.zaicev4} A. Giambruno and M. Zaicev, A characterization of algebras with polynomial
growth of the codimensions, Proc. Amer. Math. Soc. {\bf 129} (2001), 59--67.

\bibitem{GZ7} A. Giambruno and M. Zaicev, Minimal varieties of exponential growth, Adv. Math. 174, (2003), 310--323.



\bibitem{HochR}M. Hochster and J. L. Roberts, Rings of invariants of reductive groups acting on regular rings are Cohen-Macaulay.  Advances in Math. 13 (1974), 115--175.
\bibitem{kemer} A. R. Kemer, Ideals of identities of associative
algebras, Amer. Math. Soc. Translations of monographs {\bf 87}
(1991).
\bibitem{Kempf}  G. R. Kempf, 
Some quotient varieties have rational singularities. 
Michigan Math. J. 24 (1977), no. 3, 347--352. 
 \bibitem{Kraft} H. Kraft, {\sl Geometrische Methoden in der Invariantentheorie. }  F. Vieweg, 1985 - Mathematics - 308 pages.
\bibitem{Popov0} V. L. Popov, 
Criteria for the stability of the action of a semisimple group on a factorial   manifold.  
Izv. Akad. Nauk SSSR Ser. Mat. 34 1970 523Ð531. 
\bibitem{Popov}V. L. Popov,  Picard groups of homogeneous spaces of linear algebraic groups and one--dimensional homogeneous vector bundles, Math. USSR Izvestija
Ser. Mat. Tom 38 (1974), No. 2 Vol. 8 
\bibitem{Popov1}V. L. Popov. Contraction of the actions of reductive algebraic groups. Math. USSR-
Sb. 58 (1987), 311--335.
\bibitem{procesi2} C. Procesi, The invariant theory of $n\times n$ matrices, Adv. Math. {\bf 19} (1976),
300-381.
\bibitem{procesi3} C. Procesi, Computing with $2\times 2$ matrices, J. Algebra {\bf 87}
(1984), 432-359.

\bibitem{P7} C. ~Procesi, {\sl Lie Groups}.   {An approach through invariants and representations},
	pp. {xxiv+596},
	 {Springer},
	 {Universitext},
	 {2007} .


\bibitem{razmyslov1} Yu. P. Razmyslov, Trace identities of full matrix algebras over a field of characteristic zero, Math.
USSR Izv. {\bf 8} (1974), 724-760.
\bibitem{razmyslov2} Yu. P. Razmyslov, Identities of Algebras and Their Representations (Russian),
Moscow 1989. Translation: Translations of Math. Monographs {\bf 138} AMS, Providence, R.I., 1994.



\bibitem{regev1} A. Regev, Existence of Identities in $A\otimes B$, Israel J. Math. {\bf 11}
(1972), 131-152.
\bibitem{regev2} A. Regev, Growth for Algebras Satisfying Polynomial Identities,
preprint, 2014.
\bibitem{Roby}{N. Roby},
    {Lois polynomes et lois formelles en th\'eorie des modules},
    {Annales Scientifiques de l'\'Ecole Normale Sup\'erieure.
              Troisi\`eme S\'erie},
    v. {80},
     {1963},
    pp.{213--348}.
      		

\bibitem{Roby1}{N. Roby},
      {Lois polyn\^omes multiplicatives universelles},
  {Comptes Rendus Hebdomadaires des S\'eances de l'Acad\'emie des
              Sciences. S\'eries A et B},
    v. {290},
     {1980},
  n. {19},
   pp. {A869--A871}.
\bibitem{rowen1} L. H. Rowen, {\sl Polynomial Identities in Ring Theory,} Acad. Press (1980).

\bibitem{rowen2} L. H. Rowen, {\sl  Ring Theory, Vol I, II,} Acad. Press (1988).
    \bibitem{Vacc} {F. Vaccarino},
  {Generalized symmetric functions and invariants of matrices},
    {Mathematische Zeitschrift},
    v. {260},
      {2008},
    n. {3},
     pp. {509--526},
      \bibitem{shirshov2} A. I. Shirshov, On rings with identity relations
(Russian), Mat. Sb. {\bf 43} (1957), 277-283.
\bibitem{Ziplies1}{D. Ziplies},
  {Abelianizing the divided powers algebra of an algebra},
  {Journal of Algebra},
    v. {122},
    {1989},
    n. {2},
    pp. {261--274},
     		


\bibitem{Zubr} K. A.  Zubrilin,  On the largest nilpotent ideal in algebras that satisfy Capelli identities. (Russian) Mat. Sb. 188 (1997), no. 8, 93--102; translation in Sb. Math. 188 (1997), no. 8, 1203Ð1211  

\end{thebibliography}
\end{document}